\theoremstyle{plain}
\newtheorem{theorem}{Theorem}
\newtheorem{lemma}[theorem]{Lemma}
\newtheorem{corollary}[theorem]{Corollary}
\newtheorem{proposition}[theorem]{Proposition}
\theoremstyle{definition}
\theoremstyle{remark}
\DeclareMathOperator{\lcm}{lcm}
\DeclareMathOperator{\scale}{scale}
\def\tn{\textnormal}
\newcommand{\ignore}[1]{}
\newcommand{\set}[1]{\left\{#1\right\}}
\newcommand{\res}[2]{\langle #1, #2 \rangle}
\def\A{\mathcal{A}}
\def\B{\mathcal{B}}
\def\C{\mathcal{C}}
\def\G{\mathcal{G}}
\def\Hc{\mathcal{H}}
\def\K{\mathcal{K}}
\def\S{\mathcal{S}}
\def\T{\mathcal{T}}
\def\W{\mathcal{W}}
\def\mN{\mathbb{N}}
\def\mP{\mathbb{P}}
\def\mR{\mathbb{R}}
\def\mZ{\mathbb{Z}}
\def\a{\alpha}
\def\l{\ell}
\title{Decompositions of Unit Hypercubes \\and the Reversion of a Generalized M\"obius Series}
\author{Yu Hin (Gary) Au\\
\small Department of Mathematics and Statistics\\[-0.8ex]
\small University of Saskatchewan\\[-0.8ex] 
\small Saskatoon, SK, Canada\\
\small\tt au@math.usask.ca}
\begin{document}

\maketitle

\begin{abstract}
Let $s_d(n)$ be the number of distinct decompositions of the $d$-dimensional hypercube with $n$ rectangular regions that can be obtained via a sequence of splitting operations. We prove that the generating series $y = \sum_{n \geq 1} s_d(n)x^n$ satisfies the functional equation $x = \sum_{n\geq 1} \mu_d(n)y^n$, where $\mu_d(n)$ is the $d$-fold Dirichlet convolution of the M\"obius function. This generalizes a recent result by Goulden et al., and shows that $s_1(n)$ also gives the number of natural exact covering systems of $\mZ$ with $n$ residual classes. We also prove an asymptotic formula for $s_d(n)$ and describe a bijection between $1$-dimensional decompositions and natural exact covering systems.
\end{abstract}

\section{Introduction}\label{Sec:01}

\subsection{Decomposing the $d$-dimensional unit hypercube}

Suppose we start with $(0,1)^d$, the $d$-dimensional unit hypercube, and iteratively perform the following operation:
\begin{itemize}
\item
choose a region in the current decomposition, a coordinate $i \in \set{1, \ldots, d}$, and an arity $p \geq 2$;
\item
partition the selected region into $p$ equal smaller regions with cuts orthogonal to the $i^{\tn{th}}$ axis.
\end{itemize}

\begin{figure}[h!]
\begin{center}
\begin{tabular}{ccccccccc}
\begin{tikzpicture}
[scale=0.12, thick]
\draw(0,0) -- (12,0) -- (12,12) -- (0,12) -- (0,0);
\end{tikzpicture}
& \raisebox{0.6cm}{$\rightarrow$~}
\begin{tikzpicture}
[scale=0.12, thick]
\draw(0,0) -- (12,0) -- (12,12) -- (0,12) -- (0,0);
\draw(4,0) -- (4,12);
\draw(8,0) -- (8,12);
\end{tikzpicture}
& \raisebox{0.6cm}{$\rightarrow$~}
\begin{tikzpicture}
[scale=0.12, thick]
\draw(0,0) -- (12,0) -- (12,12) -- (0,12) -- (0,0);
\draw(4,0) -- (4,12);
\draw(8,0) -- (8,12);
\draw(4,6) -- (8,6);
\draw(4,3) -- (8,3);
\draw(4,9) -- (8,9);
\end{tikzpicture}
& \raisebox{0.6cm}{$\rightarrow$~}
\begin{tikzpicture}
[scale=0.12, thick]
\draw(0,0) -- (12,0) -- (12,12) -- (0,12) -- (0,0);
\draw(4,0) -- (4,12);
\draw(8,0) -- (8,12);
\draw(4,6) -- (8,6);
\draw(4,3) -- (8,3);
\draw(4,9) -- (8,9);
\draw(12,6) -- (8,6);
\end{tikzpicture}
& \raisebox{0.6cm}{$\rightarrow$~}
\begin{tikzpicture}
[scale=0.12, thick]
\draw(0,0) -- (12,0) -- (12,12) -- (0,12) -- (0,0);
\draw(4,0) -- (4,12);
\draw(8,0) -- (8,12);
\draw(4,6) -- (8,6);
\draw(4,3) -- (8,3);
\draw(4,9) -- (8,9);
\draw(12,6) -- (8,6);
\draw(10,0) -- (10,6);
\end{tikzpicture}
\end{tabular}
\caption{One way to partition the unit square $(0,1)^2$ into 8 regions.}\label{Fig:0101}
\end{center}
\end{figure}
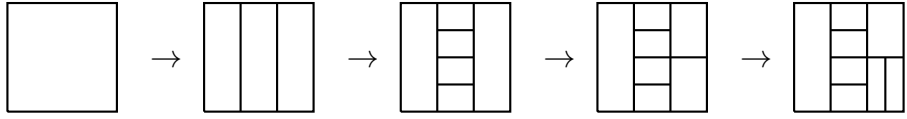

For example, Figure~\ref{Fig:0101} illustrates one way to decompose the unit square $(0,1)^2$ into $8$ regions using a sequence of $4$ partitions. 

We are interested in the following question: Given integers $d,n \geq 1$, how many distinct compositions of $(0,1)^d$ are there with $n$ regions? In Figure~\ref{Fig:0102}, we list all the possible decompositions of $(0,1)^2$ with $n \leq 4$ regions.

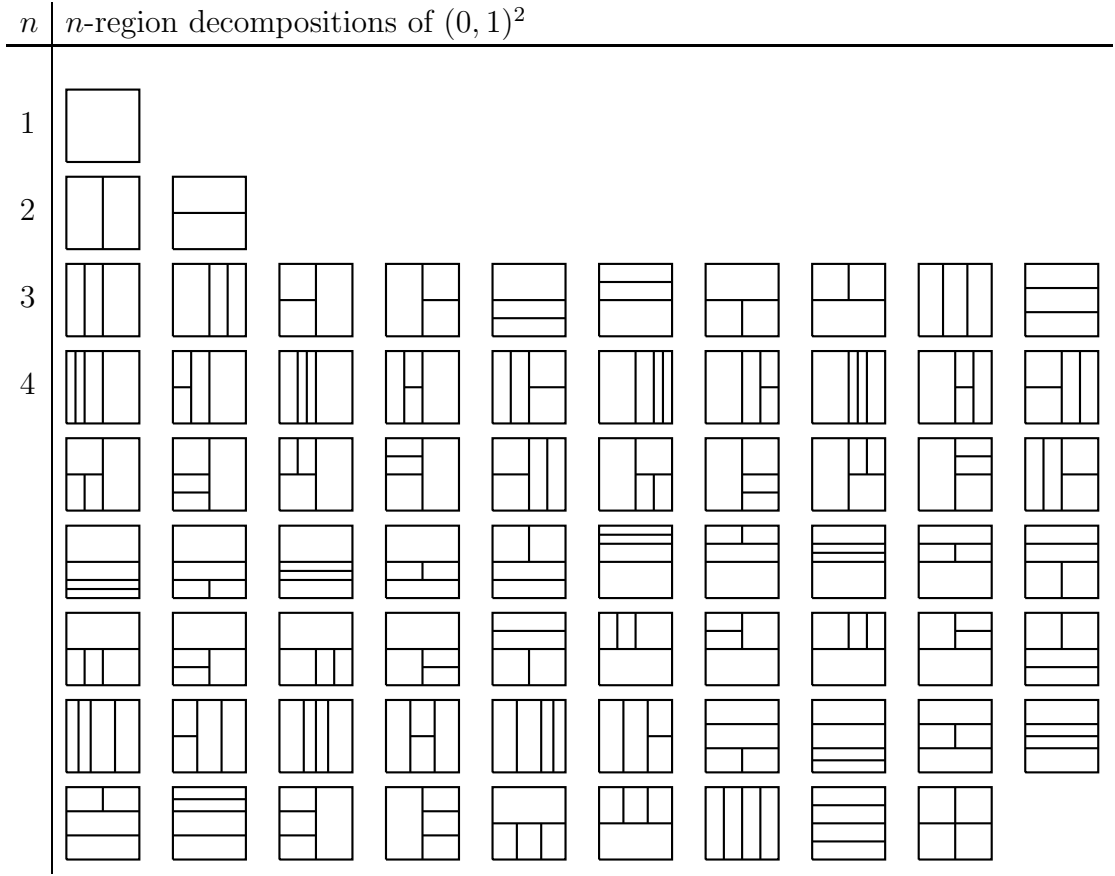
\begin{figure}[h!]
\begin{center}
$
\begin{array}{l|l}
n & \tn{$n$-region decompositions of $(0,1)^2$}\\
\hline
\\
\raisebox{0.4cm}{$1$}
&
\begin{tikzpicture}
[scale=0.08, thick]
\draw(0,0) -- (12,0) -- (12,12) -- (0,12) -- (0,0);
\end{tikzpicture}
\\
\raisebox{0.4cm}{$2$}
&

\begin{tikzpicture}
[scale=0.08, thick]
\draw(0,0) -- (12,0) -- (12,12) -- (0,12) -- (0,0);
\draw(6,0) -- (6,12);
\end{tikzpicture}
\quad
\begin{tikzpicture}
[scale=0.08, thick]
\draw(0,0) -- (12,0) -- (12,12) -- (0,12) -- (0,0);
\draw(0,6) -- (12,6);
\end{tikzpicture}
\\
\raisebox{0.4cm}{$3$}
&

\begin{tikzpicture}
[scale=0.08, thick]
\draw(0,0) -- (12,0) -- (12,12) -- (0,12) -- (0,0);
\draw(6,0) -- (6,12);
\draw(3,0) -- (3,12);
\end{tikzpicture}
\quad
\begin{tikzpicture}
[scale=0.08, thick]
\draw(0,0) -- (12,0) -- (12,12) -- (0,12) -- (0,0);
\draw(6,0) -- (6,12);
\draw(9,0) -- (9,12);
\end{tikzpicture}
\quad
\begin{tikzpicture}
[scale=0.08, thick]
\draw(0,0) -- (12,0) -- (12,12) -- (0,12) -- (0,0);
\draw(6,0) -- (6,12);
\draw(0,6) -- (6,6);
\end{tikzpicture}
\quad
\begin{tikzpicture}
[scale=0.08, thick]
\draw(0,0) -- (12,0) -- (12,12) -- (0,12) -- (0,0);
\draw(6,0) -- (6,12);
\draw(12,6) -- (6,6);
\end{tikzpicture}
\quad
\begin{tikzpicture}
[scale=0.08, thick]
\draw(0,0) -- (12,0) -- (12,12) -- (0,12) -- (0,0);
\draw(0,6) -- (12,6);
\draw(0,3) -- (12,3);
\end{tikzpicture}
\quad
\begin{tikzpicture}
[scale=0.08, thick]
\draw(0,0) -- (12,0) -- (12,12) -- (0,12) -- (0,0);
\draw(0,6) -- (12,6);
\draw(0,9) -- (12,9);
\end{tikzpicture}
\quad
\begin{tikzpicture}
[scale=0.08, thick]
\draw(0,0) -- (12,0) -- (12,12) -- (0,12) -- (0,0);
\draw(0,6) -- (12,6);
\draw(6,0) -- (6,6);
\end{tikzpicture}
\quad
\begin{tikzpicture}
[scale=0.08, thick]
\draw(0,0) -- (12,0) -- (12,12) -- (0,12) -- (0,0);
\draw(0,6) -- (12,6);
\draw(6,12) -- (6,6);
\end{tikzpicture}
\quad
\begin{tikzpicture}
[scale=0.08, thick]
\draw(0,0) -- (12,0) -- (12,12) -- (0,12) -- (0,0);
\draw(4,0) -- (4,12);
\draw(8,0) -- (8,12);
\end{tikzpicture}
\quad
\begin{tikzpicture}
[scale=0.08, thick]
\draw(0,0) -- (12,0) -- (12,12) -- (0,12) -- (0,0);
\draw(0,4) -- (12,4);
\draw(0,8) -- (12,8);
\end{tikzpicture}
\\
\raisebox{0.4cm}{$4$}
&
\begin{tikzpicture}
[scale=0.08, thick]
\draw(0,0) -- (12,0) -- (12,12) -- 	(0,12) -- (0,0); 
\draw(6,0) -- (6,12);
\draw(3,0) -- (3,12);
\draw(1.5,0) -- (1.5,12);
\end{tikzpicture}
\quad
\begin{tikzpicture}
[scale=0.08, thick]
\draw(0,0) -- (12,0) -- (12,12) -- 	(0,12) -- (0,0); 
\draw(6,0) -- (6,12);
\draw(3,0) -- (3,12);
\draw(0,6) -- (3,6);
\end{tikzpicture}
\quad
\begin{tikzpicture}
[scale=0.08, thick]
\draw(0,0) -- (12,0) -- (12,12) -- 	(0,12) -- (0,0); 
\draw(6,0) -- (6,12);
\draw(3,0) -- (3,12);
\draw(4.5,0) -- (4.5,12);
\end{tikzpicture}
\quad
\begin{tikzpicture}
[scale=0.08, thick]
\draw(0,0) -- (12,0) -- (12,12) -- 	(0,12) -- (0,0); 
\draw(6,0) -- (6,12);
\draw(3,0) -- (3,12);
\draw(3,6) -- (6,6);
\end{tikzpicture}
\quad
\begin{tikzpicture}
[scale=0.08, thick]
\draw(0,0) -- (12,0) -- (12,12) -- 	(0,12) -- (0,0); 
\draw(6,0) -- (6,12);
\draw(3,0) -- (3,12);
\draw(6,6) -- (12,6);
\end{tikzpicture}

\quad
\begin{tikzpicture}
[scale=0.08, thick]
\draw(0,0) -- (12,0) -- (12,12) -- (0,12) -- (0,0);
\draw(6,0) -- (6,12);
\draw(9,0) -- (9,12);
\draw(10.5,0) -- (10.5,12);
\end{tikzpicture}
\quad
\begin{tikzpicture}
[scale=0.08, thick]
\draw(0,0) -- (12,0) -- (12,12) -- (0,12) -- (0,0);
\draw(6,0) -- (6,12);
\draw(9,0) -- (9,12);
\draw(9,6) -- (12,6);
\end{tikzpicture}
\quad
\begin{tikzpicture}
[scale=0.08, thick]
\draw(0,0) -- (12,0) -- (12,12) -- (0,12) -- (0,0);
\draw(6,0) -- (6,12);
\draw(9,0) -- (9,12);
\draw(7.5,0) -- (7.5,12);
\end{tikzpicture}
\quad
\begin{tikzpicture}
[scale=0.08, thick]
\draw(0,0) -- (12,0) -- (12,12) -- (0,12) -- (0,0);
\draw(6,0) -- (6,12);
\draw(9,0) -- (9,12);
\draw(6,6) -- (9,6);
\end{tikzpicture}
\quad
\begin{tikzpicture}
[scale=0.08, thick]
\draw(0,0) -- (12,0) -- (12,12) -- (0,12) -- (0,0);
\draw(6,0) -- (6,12);
\draw(9,0) -- (9,12);
\draw(0,6) -- (6,6);
\end{tikzpicture}
\\ &
\begin{tikzpicture}
[scale=0.08, thick]
\draw(0,0) -- (12,0) -- (12,12) -- (0,12) -- (0,0);
\draw(6,0) -- (6,12);
\draw(0,6) -- (6,6);
\draw(3,0) -- (3,6);
\end{tikzpicture}
\quad
\begin{tikzpicture}
[scale=0.08, thick]
\draw(0,0) -- (12,0) -- (12,12) -- (0,12) -- (0,0);
\draw(6,0) -- (6,12);
\draw(0,6) -- (6,6);
\draw(0,3) -- (6,3);
\end{tikzpicture}
\quad
\begin{tikzpicture}
[scale=0.08, thick]
\draw(0,0) -- (12,0) -- (12,12) -- (0,12) -- (0,0);
\draw(6,0) -- (6,12);
\draw(0,6) -- (6,6);
\draw(3,6) -- (3,12);
\end{tikzpicture}
\quad
\begin{tikzpicture}
[scale=0.08, thick]
\draw(0,0) -- (12,0) -- (12,12) -- (0,12) -- (0,0);
\draw(6,0) -- (6,12);
\draw(0,6) -- (6,6);
\draw(0,9) -- (6,9);
\end{tikzpicture}
\quad
\begin{tikzpicture}
[scale=0.08, thick]
\draw(0,0) -- (12,0) -- (12,12) -- (0,12) -- (0,0);
\draw(6,0) -- (6,12);
\draw(0,6) -- (6,6);
\draw(9,0) -- (9,12);
\end{tikzpicture}
\quad

\begin{tikzpicture}
[scale=0.08, thick]
\draw(0,0) -- (12,0) -- (12,12) -- (0,12) -- (0,0);
\draw(6,0) -- (6,12);
\draw(12,6) -- (6,6);
\draw(9,0) -- (9,6);
\end{tikzpicture}
\quad
\begin{tikzpicture}
[scale=0.08, thick]
\draw(0,0) -- (12,0) -- (12,12) -- (0,12) -- (0,0);
\draw(6,0) -- (6,12);
\draw(12,6) -- (6,6);
\draw(6,3) -- (12,3);
\end{tikzpicture}
\quad
\begin{tikzpicture}
[scale=0.08, thick]
\draw(0,0) -- (12,0) -- (12,12) -- (0,12) -- (0,0);
\draw(6,0) -- (6,12);
\draw(12,6) -- (6,6);
\draw(9,6) -- (9,12);
\end{tikzpicture}
\quad
\begin{tikzpicture}
[scale=0.08, thick]
\draw(0,0) -- (12,0) -- (12,12) -- (0,12) -- (0,0);
\draw(6,0) -- (6,12);
\draw(12,6) -- (6,6);
\draw(6,9) -- (12,9);
\end{tikzpicture}
\quad
\begin{tikzpicture}
[scale=0.08, thick]
\draw(0,0) -- (12,0) -- (12,12) -- (0,12) -- (0,0);
\draw(6,0) -- (6,12);
\draw(12,6) -- (6,6);
\draw(3,0) -- (3,12);
\end{tikzpicture}
\\ &
\begin{tikzpicture}
[scale=0.08, thick]
\draw(0,0) -- (12,0) -- (12,12) -- (0,12) -- (0,0);
\draw(0,6) -- (12,6);
\draw(0,3) -- (12,3);
\draw(0,1.5) -- (12,1.5);
\end{tikzpicture}
\quad
\begin{tikzpicture}
[scale=0.08, thick]
\draw(0,0) -- (12,0) -- (12,12) -- (0,12) -- (0,0);
\draw(0,6) -- (12,6);
\draw(0,3) -- (12,3);
\draw(6,0) -- (6,3);
\end{tikzpicture}
\quad
\begin{tikzpicture}
[scale=0.08, thick]
\draw(0,0) -- (12,0) -- (12,12) -- (0,12) -- (0,0);
\draw(0,6) -- (12,6);
\draw(0,3) -- (12,3);
\draw(0,4.5) -- (12,4.5);
\end{tikzpicture}
\quad
\begin{tikzpicture}
[scale=0.08, thick]
\draw(0,0) -- (12,0) -- (12,12) -- (0,12) -- (0,0);
\draw(0,6) -- (12,6);
\draw(0,3) -- (12,3);
\draw(6,3) -- (6,6);
\end{tikzpicture}
\quad
\begin{tikzpicture}
[scale=0.08, thick]
\draw(0,0) -- (12,0) -- (12,12) -- (0,12) -- (0,0);
\draw(0,6) -- (12,6);
\draw(0,3) -- (12,3);
\draw(6,6) -- (6,12);
\end{tikzpicture}
\quad

\begin{tikzpicture}
[scale=0.08, thick]
\draw(0,0) -- (12,0) -- (12,12) -- (0,12) -- (0,0);
\draw(0,6) -- (12,6);
\draw(0,9) -- (12,9);
\draw(0,10.5) -- (12,10.5);
\end{tikzpicture}
\quad
\begin{tikzpicture}
[scale=0.08, thick]
\draw(0,0) -- (12,0) -- (12,12) -- (0,12) -- (0,0);
\draw(0,6) -- (12,6);
\draw(0,9) -- (12,9);
\draw(6,9) -- (6,12);
\end{tikzpicture}
\quad
\begin{tikzpicture}
[scale=0.08, thick]
\draw(0,0) -- (12,0) -- (12,12) -- (0,12) -- (0,0);
\draw(0,6) -- (12,6);
\draw(0,9) -- (12,9);
\draw(0,7.5) -- (12,7.5);
\end{tikzpicture}
\quad
\begin{tikzpicture}
[scale=0.08, thick]
\draw(0,0) -- (12,0) -- (12,12) -- (0,12) -- (0,0);
\draw(0,6) -- (12,6);
\draw(0,9) -- (12,9);
\draw(6,6) -- (6,9);
\end{tikzpicture}
\quad
\begin{tikzpicture}
[scale=0.08, thick]
\draw(0,0) -- (12,0) -- (12,12) -- (0,12) -- (0,0);
\draw(0,6) -- (12,6);
\draw(0,9) -- (12,9);
\draw(6,0) -- (6,6);
\end{tikzpicture}
\\ &
\begin{tikzpicture}
[scale=0.08, thick]
\draw(0,0) -- (12,0) -- (12,12) -- (0,12) -- (0,0);
\draw(0,6) -- (12,6);
\draw(6,0) -- (6,6);
\draw(3,0) -- (3,6);
\end{tikzpicture}
\quad
\begin{tikzpicture}
[scale=0.08, thick]
\draw(0,0) -- (12,0) -- (12,12) -- (0,12) -- (0,0);
\draw(0,6) -- (12,6);
\draw(6,0) -- (6,6);
\draw(0,3) -- (6,3);
\end{tikzpicture}
\quad
\begin{tikzpicture}
[scale=0.08, thick]
\draw(0,0) -- (12,0) -- (12,12) -- (0,12) -- (0,0);
\draw(0,6) -- (12,6);
\draw(6,0) -- (6,6);
\draw(9,0) -- (9,6);
\end{tikzpicture}
\quad
\begin{tikzpicture}
[scale=0.08, thick]
\draw(0,0) -- (12,0) -- (12,12) -- (0,12) -- (0,0);
\draw(0,6) -- (12,6);
\draw(6,0) -- (6,6);
\draw(6,3) -- (12,3);
\end{tikzpicture}
\quad
\begin{tikzpicture}
[scale=0.08, thick]
\draw(0,0) -- (12,0) -- (12,12) -- (0,12) -- (0,0);
\draw(0,6) -- (12,6);
\draw(6,0) -- (6,6);
\draw(0,9) -- (12,9);
\end{tikzpicture}
\quad

\begin{tikzpicture}
[scale=0.08, thick]
\draw(0,0) -- (12,0) -- (12,12) -- (0,12) -- (0,0);
\draw(0,6) -- (12,6);
\draw(6,12) -- (6,6);
\draw(3,6) -- (3,12);
\end{tikzpicture}
\quad
\begin{tikzpicture}
[scale=0.08, thick]
\draw(0,0) -- (12,0) -- (12,12) -- (0,12) -- (0,0);
\draw(0,6) -- (12,6);
\draw(6,12) -- (6,6);
\draw(0,9) -- (6,9);
\end{tikzpicture}
\quad
\begin{tikzpicture}
[scale=0.08, thick]
\draw(0,0) -- (12,0) -- (12,12) -- (0,12) -- (0,0);
\draw(0,6) -- (12,6);
\draw(6,12) -- (6,6);
\draw(9,6) -- (9,12);
\end{tikzpicture}
\quad
\begin{tikzpicture}
[scale=0.08, thick]
\draw(0,0) -- (12,0) -- (12,12) -- (0,12) -- (0,0);
\draw(0,6) -- (12,6);
\draw(6,12) -- (6,6);
\draw(6,9) -- (12,9);
\end{tikzpicture}
\quad
\begin{tikzpicture}
[scale=0.08, thick]
\draw(0,0) -- (12,0) -- (12,12) -- (0,12) -- (0,0);
\draw(0,6) -- (12,6);
\draw(6,12) -- (6,6);
\draw(0,3) -- (12,3);
\end{tikzpicture}
\\ &
\begin{tikzpicture}
[scale=0.08, thick]
\draw(0,0) -- (12,0) -- (12,12) -- (0,12) -- (0,0);
\draw(4,0) -- (4,12);
\draw(8,0) -- (8,12);
\draw(2,0) -- (2,12);
\end{tikzpicture}
\quad
\begin{tikzpicture}
[scale=0.08, thick]
\draw(0,0) -- (12,0) -- (12,12) -- (0,12) -- (0,0);
\draw(4,0) -- (4,12);
\draw(8,0) -- (8,12);
\draw(0,6) -- (4,6);
\end{tikzpicture}
\quad
\begin{tikzpicture}
[scale=0.08, thick]
\draw(0,0) -- (12,0) -- (12,12) -- (0,12) -- (0,0);
\draw(4,0) -- (4,12);
\draw(8,0) -- (8,12);
\draw(6,0) -- (6,12);
\end{tikzpicture}
\quad
\begin{tikzpicture}
[scale=0.08, thick]
\draw(0,0) -- (12,0) -- (12,12) -- (0,12) -- (0,0);
\draw(4,0) -- (4,12);
\draw(8,0) -- (8,12);
\draw(4,6) -- (8,6);
\end{tikzpicture}
\quad
\begin{tikzpicture}
[scale=0.08, thick]
\draw(0,0) -- (12,0) -- (12,12) -- (0,12) -- (0,0);
\draw(4,0) -- (4,12);
\draw(8,0) -- (8,12);
\draw(10,0) -- (10,12);
\end{tikzpicture}
\quad
\begin{tikzpicture}
[scale=0.08, thick]
\draw(0,0) -- (12,0) -- (12,12) -- (0,12) -- (0,0);
\draw(4,0) -- (4,12);
\draw(8,0) -- (8,12);
\draw(8,6) -- (12,6);
\end{tikzpicture}
\quad

\begin{tikzpicture}
[scale=0.08, thick]
\draw(0,0) -- (12,0) -- (12,12) -- (0,12) -- (0,0);
\draw(0,4) -- (12,4);
\draw(0,8) -- (12,8);
\draw(6,0) -- (6,4);
\end{tikzpicture}
\quad
\begin{tikzpicture}
[scale=0.08, thick]
\draw(0,0) -- (12,0) -- (12,12) -- (0,12) -- (0,0);
\draw(0,4) -- (12,4);
\draw(0,8) -- (12,8);
\draw(0,2) -- (12,2);
\end{tikzpicture}
\quad
\begin{tikzpicture}
[scale=0.08, thick]
\draw(0,0) -- (12,0) -- (12,12) -- (0,12) -- (0,0);
\draw(0,4) -- (12,4);
\draw(0,8) -- (12,8);
\draw(6,4) -- (6,8);
\end{tikzpicture}
\quad
\begin{tikzpicture}
[scale=0.08, thick]
\draw(0,0) -- (12,0) -- (12,12) -- (0,12) -- (0,0);
\draw(0,4) -- (12,4);
\draw(0,8) -- (12,8);
\draw(0,6) -- (12,6);
\end{tikzpicture}
\\ &
\begin{tikzpicture}
[scale=0.08, thick]
\draw(0,0) -- (12,0) -- (12,12) -- (0,12) -- (0,0);
\draw(0,4) -- (12,4);
\draw(0,8) -- (12,8);
\draw(6,8) -- (6,12);
\end{tikzpicture}
\quad
\begin{tikzpicture}
[scale=0.08, thick]
\draw(0,0) -- (12,0) -- (12,12) -- (0,12) -- (0,0);
\draw(0,4) -- (12,4);
\draw(0,8) -- (12,8);
\draw(0,10) -- (12,10);
\end{tikzpicture}
\quad

\begin{tikzpicture}
[scale=0.08, thick]
\draw(0,0) -- (12,0) -- (12,12) -- (0,12) -- (0,0);
\draw(6,0) -- (6,12);
\draw(0,4) -- (6,4);
\draw(0,8) -- (6,8);
\end{tikzpicture}
\quad
\begin{tikzpicture}
[scale=0.08, thick]
\draw(0,0) -- (12,0) -- (12,12) -- (0,12) -- (0,0);
\draw(6,0) -- (6,12);
\draw(6,4) -- (12,4);
\draw(6,8) -- (12,8);
\end{tikzpicture}
\quad
\begin{tikzpicture}
[scale=0.08, thick]
\draw(0,0) -- (12,0) -- (12,12) -- (0,12) -- (0,0);
\draw(0,6) -- (12,6);
\draw(4,0) -- (4,6);
\draw(8,0) -- (8,6);
\end{tikzpicture}
\quad
\begin{tikzpicture}
[scale=0.08, thick]
\draw(0,0) -- (12,0) -- (12,12) -- (0,12) -- (0,0);
\draw(0,6) -- (12,6);
\draw(4,6) -- (4,12);
\draw(8,6) -- (8,12);
\end{tikzpicture}
\quad
\begin{tikzpicture}
[scale=0.08, thick]
\draw(0,0) -- (12,0) -- (12,12) -- (0,12) -- (0,0);
\draw(6,0) -- (6,12);
\draw(3,0) -- (3,12);
\draw(9,0) -- (9,12);
\end{tikzpicture}
\quad
\begin{tikzpicture}
[scale=0.08, thick]
\draw(0,0) -- (12,0) -- (12,12) -- (0,12) -- (0,0);
\draw(0,6) -- (12,6);
\draw(0,3) -- (12,3);
\draw(0,9) -- (12,9);
\end{tikzpicture}
\quad
\begin{tikzpicture}
[scale=0.08, thick]
\draw(0,0) -- (12,0) -- (12,12) -- (0,12) -- (0,0);
\draw(6,0) -- (6,12);
\draw(0,6) -- (12,6);
\end{tikzpicture}

\end{array}$

\caption{Elements in $\S_{2,n}$ for $n \leq 4$.}\label{Fig:0102}
\end{center}
\end{figure}

Let's introduce some notation so we can discuss these decompositions more precisely. Let $\mN = \set{1, 2, \ldots, }$ denote the set of natural numbers, and $[n] = \set{1, 2, \ldots, n}$ for every $n \in \mN$. Given a region
\[
R = ( a_1, b_1 ) \times \cdots \times ( a_d, b_d ) \subseteq (0,1)^d
\]
(all regions mentioned in this manuscript will be rectangular), a coordinate $i \in [d]$, and an arity $p \geq 2$, define
\[
c_j = a_i + \frac{j}{p} (b_i-a_i)
\]
for every $j \in \set{0,1,\ldots, p}$, and
\[
H_{i,p}(R) = \set{ \set{x \in R : c_{j-1} < x_i < c_{j}} : j \in [p]}
\]
Thus, $H_{i,p}(R)$ is a set consisting of the $p$ regions obtained from splitting $R$ along the $i^{\tn{th}}$ coordinate. 
%We call $H_{i,p}(R)$ the \emph{$p$-splitting} of $R$ in the coordinate $i$.
Then we define the set of hypercube decompositions $\S_{d}$ recursively as follows:

\begin{itemize}
\item
$\set{(0,1)^d} \in \S_d$ --- this is the trivial decomposition with one region, the entire unit hypercube.
\item
For every $S \in \S_d$, region $R \in S$, coordinate $i \in [d]$, and arity $p \geq 2$,
\[
(S \setminus \set{R}) \cup H_{i,p}(R) \in \S_d.
\]
\end{itemize}

In other words, $\S_d$ consists of the decompositions of $(0,1)^d$ that can be achieved by a sequence of splitting operations. In particular, the coordinate and arity used in each splitting operation are arbitrary and can vary over the splitting sequence. Furthermore, given $S \in \S_d$ we let $|S|$ denote the number of regions in $S$, and define 
\[
\S_{d,n} = \set{ S \in \S_d : |S| = n}
\]
for every integer $n \geq 1$. Note that distinct splitting sequences can result in the same decomposition. For an example, the decomposition $S = H_{1,6}( (0,1) ) \in \S_{1,6}$ can be obtained from simply $6$-splitting the unit interval $(0,1)$, or $2$-splitting $(0,1)$ followed by $3$-splitting each of $(0,1/2)$ and $(1/2,1)$. 

We are interested in enumerating the number of decompositions $s_d(n) = |\S_{d,n}|$. For small values of $d$, we obtain the following sequences:
\[
\begin{array}{l|rrrrrrrrrrr}
n & 1 & 2 & 3 & 4 & 5 & 6 & 7 & 8 & 9 & 10 & \cdots \\
\hline
\href{https://oeis.org/A050385}{s_1(n)} & 1 & 1 & 3 & 10 & 39 & 160 & 691 & 3081 & 14095 & 65757 & \cdots \\
% 311695               1496833               7266979              35608419             175875537
s_2(n) & 1 & 2 & 10 & 59 & 394 & 2810 & 20998 & 162216  & 1285185 & 10384986 & \cdots \\
%  85258446             709138185            5962927656           50606595570          432920891516
s_3(n) & 1 & 3 & 21 & 177 & 1677 & 17001 & 180525 & 1981909 & 22314339 & 256245783 & \cdots 
%         2989663689        35338763001       422295624651      5093326567395     61921421632673
\end{array}
\]
Note that the hyperlink at $s_1(n)$ in the table above directs to the sequence's entry in the On-Line Encyclopedia of Integer Sequences (OEIS)~\cite{OEIS}. Similar hyperlinks are placed throughout this manuscript for all integer sequences that are already in the OEIS at the time of this writing.

\subsection{A generalized M\"obius function}

To state our main result, we will need to describe the following generalization of the well-studied M\"obius function. Given $n \in \mN$, the \emph{M\"{o}bius function} is defined to be
\[
\mu(n) = 
\begin{cases}
(-1)^k & \tn{if $n$ is a product of $k$ distinct primes;}\\
0 & \tn{if $p^2 | n$ for some $p>1$.}
\end{cases}
\]
(The convention is that $\mu(1) = 1$.) A well-known property of $\mu(n)$ is that
\begin{equation}\label{Eq:0101}
\sum_{i | n} \mu(i) =
\begin{cases}
1 & \tn{if $n=1$;}\\
0 & \tn{if $n > 1$.}
\end{cases}
\end{equation}
More generally, given two arithmetic functions $\a, \beta : \mN \to \mR$, their \emph{Dirichlet convolution} $\a \ast \beta : \mN \to \mR$ is the function
\[
(\a \ast \beta)(n) = \sum_{i | n} \a(i) \beta \left(\frac{n}{i}\right).
\]
For an example, let $\mathbbm{1} : \mN \to \mR$ be the function where $\mathbbm{1}(n) = 1$ for all $n \geq 1$, and let $\delta : \mN \to \mR$ denote the function where $\delta(1)=1$ and $\delta(n) = 0$ for all $n \geq 2$. Then \eqref{Eq:0101} can be equivalently stated as
\[
\mu \ast \mathbbm{1} = \delta.
\]
Next, given integer $d \geq 1$, we define the \emph{$d$-fold M\"obius function} where
\[
\mu_d = \underbrace{ \mu \ast \mu \ast \cdots \ast \mu}_{\tn{$d$ times}}.
\]
The following table gives the first few terms of $\mu_d(n)$ for $d \leq 3$.
\[
\begin{array}{l|rrrrrrrrrrrrrrrr}
n & 1 & 2 & 3 & 4 & 5 & 6 & 7 & 8 & 9& 10 &11 & 12 & 13 & 14 & 15& \cdots \\
\hline
\href{https://oeis.org/A008683}{\mu_1(n)} & 1 & -1 & -1 & 0 & -1 & 1 & -1 & 0& 0 & 1 & -1 &  0&  -1&   1&   1& \cdots \\
%   0  -1   0  -1 0
\href{https://oeis.org/A007427}{\mu_2(n)} & 1 & -2 & -2 & 1 & -2 & 4 & -2 & 0& 1 & 4 &  -2  &-2 & -2&   4 &  4 & \cdots \\
% 0  -2  -2  -2 -2
\mu_3(n) & 1 & -3 & -3 & 3 & -3 & 9 & -3 & -1& 3 & 9 & -3  & -9 &  -3 &   9 &   9& \cdots 
% 0   -3   -9   -3   -9
\end{array}
\]
The generalized M\"obius function $\mu_d$ was independently discovered several times, first by Fleck in 1915~\cite[Section 2.2]{SandorC04}. The reader may refer to~\cite{SandorC04} for the historical developments of this (and other) generalizations of $\mu$.

Next, we state two formulas for $\mu_d(n)$ that will be useful subsequently.

\begin{lemma}
For all integers $d,n \geq 1$
\begin{equation}\label{Eq:0102}
\mu_d(n) = \sum_{\substack{ n_1, \ldots, n_d \geq 1 \\ \prod_{i=1}^d n_i = n}} \mu(n_i)
\end{equation}
Moreover, suppose $n$ has prime factorization $n=p_1^{m_1} p_2^{m_2} \cdots p_k^{m_k}$. Then
\begin{equation}\label{Eq:0103}
\mu_d(n) = \prod_{i=1}^k (-1)^{m_i} \binom{d}{m_i}.
\end{equation}
\end{lemma}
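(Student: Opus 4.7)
The plan is to establish (\ref{Eq:0102}) by induction on $d$, and then derive (\ref{Eq:0103}) from it together with the multiplicativity of Dirichlet convolution.

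For (\ref{Eq:0102}), when $d=1$ the claim reduces to $\mu_1(n) = \mu(n)$, which is immediate. For the inductive step, use associativity of Dirichlet convolution to write $\mu_d = \mu_{d-1} \ast \mu$, so that
\[
\mu_d(n) = \sum_{ab = n} \mu_{d-1}(a)\,\mu(b).
\]
Expanding $\mu_{d-1}(a)$ by the inductive hypothesis and renaming $b$ as $n_d$ collapses the two nested sums into a single sum over ordered factorizations $n_1 n_2 \cdots n_d = n$, yielding the claimed identity.

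For (\ref{Eq:0103}), recall that $\mu$ is multiplicative and that the Dirichlet convolution of multiplicative functions is again multiplicative; hence $\mu_d$ is multiplicative. It therefore suffices to evaluate $\mu_d$ on a prime power $p^m$. Using (\ref{Eq:0102}),
\[
\mu_d(p^m) = \sum_{\substack{e_1, \ldots, e_d \geq 0 \\ e_1 + \cdots + e_d = m}} \prod_{i=1}^d \mu(p^{e_i}).
\]
Since $\mu(p^e) = 0$ whenever $e \geq 2$, only tuples with every $e_i \in \set{0,1}$ contribute; there are exactly $\binom{d}{m}$ such tuples, each contributing $(-1)^m$. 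Hence $\mu_d(p^m) = (-1)^m \binom{d}{m}$, and multiplicativity across the primes in $n = p_1^{m_1} \cdots p_k^{m_k}$ yields (\ref{Eq:0103}).

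The main ``obstacle'' is essentially bookkeeping: both parts follow by unwinding the definition of Dirichlet convolution, and the only subtlety is keeping track of which indices are free in the summation. I would route (\ref{Eq:0103}) through multiplicativity rather than by manipulating (\ref{Eq:0102}) directly, since this sidesteps the need to reason explicitly about squarefree tuples with a prescribed product, and it makes the appearance of the binomial coefficient transparent.
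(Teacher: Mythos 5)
Your proposal is correct and follows essentially the same strategy as the paper: induction on $d$ via $\mu_d = \mu_{d-1}\ast\mu$ for \eqref{Eq:0102}, and multiplicativity of $\mu_d$ plus an evaluation at prime powers for \eqref{Eq:0103}. The one small deviation is in how you compute $\mu_d(p^m)$: the paper runs a second induction on $d$, whereas you read the value off directly from \eqref{Eq:0102} by observing that only tuples with every exponent in $\set{0,1}$ survive, giving $\binom{d}{m}$ choices each contributing $(-1)^m$. Your counting argument is a touch cleaner since it dispenses with the extra induction and makes the binomial coefficient appear naturally (and it also correctly covers the case $m > d$, where $\binom{d}{m}=0$), but it is a cosmetic variation rather than a different route.
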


\begin{proof}
\eqref{Eq:0102} follows immediately from the definition of the Dirichlet convolution and a simple induction on $d$. For~\eqref{Eq:0103}, one can use induction on $d$ to show that $\mu_d(p_1^{m_1}) = (-1)^{m_1} \binom{d}{m_i}$ for all prime powers $p_1^{m_1}$, and then use the facts that
\begin{itemize}
\item
$\mu$ is multiplicative (i.e., $\mu(ab) = \mu(a)\mu(b)$ if $a,b$ are coprime);
\item
if $\a,\beta$ are multiplicative, so is their Dirichlet convolution $\a \ast \beta$.
\end{itemize}
Thus, $\mu_d$ is multiplicative as well, and so it follows that
\[
\mu_d(n) = \mu_d(p_1^{m_1} \cdots p_k^{m_k}) = \prod_{i=1}^k (-1)^{m_i} \binom{d}{m_i}.
\]
\end{proof}

Our main result of the manuscript is the following.

\begin{theorem}\label{Thm:0101}
Given $d \in \mN$, define $y = \sum_{n \geq 1} s_d(n)x^n$. Then $y$ satisfies the functional equation
\begin{equation}
x = \sum_{n \geq 1} \mu_d(n)y^n.
\end{equation}
\end{theorem}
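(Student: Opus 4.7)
The plan is to interpret $\sum_{n \geq 1} \mu_d(n) y^n$ combinatorially as a signed sum indexed by pairs (decomposition, grid refinement) and show the signs cancel except on the trivial decomposition. By \eqref{Eq:0102},
\begin{equation*}
\sum_{n \geq 1} \mu_d(n) y^n = \sum_{(n_1, \ldots, n_d) \in \mN^d} \left(\prod_{i=1}^d \mu(n_i)\right) y^{n_1 \cdots n_d}.
\end{equation*}
The factor $y^{n_1 \cdots n_d}$ is the generating series for ordered $n_1 \cdots n_d$-tuples of elements of $\S_d$, and combining such a tuple with the subdivision of $(0,1)^d$ into an $n_1 \times \cdots \times n_d$ grid (putting the $j$-th rescaled decomposition into the $j$-th cell) yields a decomposition $S^* \in \S_d$ that I call \emph{$(n_1, \ldots, n_d)$-refined}: the grid hyperplanes appear as full-length cuts of $S^*$, and each cell's restriction of $S^*$ lies in $\S_d$ after rescaling. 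This correspondence is a bijection, so reversing the order of summation gives
\begin{equation*}
\sum_{n \geq 1} \mu_d(n) y^n = \sum_{S \in \S_d} x^{|S|} \sum_{(n_1, \ldots, n_d) \in \N(S)} \prod_{i=1}^d \mu(n_i),
\end{equation*}
where $\N(S) \subseteq \mN^d$ collects the tuples for which $S$ is refined.

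To conclude that this equals $x$, the inner sum must be $1$ on the trivial decomposition $\set{(0,1)^d}$ (immediate, since $\N(\set{(0,1)^d}) = \set{(1, \ldots, 1)}$) and vanish on every other $S$. The vanishing reduces to the following structural lemma I intend to prove: for each $S \in \S_d$ there is a unique tuple $(N_1^*(S), \ldots, N_d^*(S)) \in \mN^d$ such that
\begin{equation*}
\N(S) = \set{(n_1, \ldots, n_d) \in \mN^d : n_i \mid N_i^*(S) \text{ for every } i},
\end{equation*}
and $(N_1^*(S), \ldots, N_d^*(S)) = (1, \ldots, 1)$ if and only if $S$ is trivial. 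Granting this, the inner sum factors over coordinates by \eqref{Eq:0101} as $\prod_{i=1}^d \sum_{n \mid N_i^*(S)} \mu(n) = \prod_{i=1}^d [N_i^*(S) = 1]$, which vanishes whenever $S$ is non-trivial.

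The main work is the structural lemma. Divisor-closure of $\N(S)$ --- if $(n_1, \ldots, n_d) \in \N(S)$ and $m_i \mid n_i$ for every $i$, then $(m_1, \ldots, m_d) \in \N(S)$ --- is routine: the coarser grid hyperplanes lie inside the finer ones (so remain full-length cuts in $S$), and each coarser cell's sub-decomposition is realized by first applying an $(n_1/m_1) \times \cdots \times (n_d/m_d)$ grid split and then inserting the (already valid) sub-decompositions of the constituent finer cells, keeping it in $\S_d$. The main obstacle is closure of $\N(S)$ under coordinatewise $\lcm$: together with divisor-closure this produces the unique coordinatewise maximum $(N_1^*(S), \ldots, N_d^*(S))$. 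The subtlety is that the union of the cut hyperplanes of two tuples $(n_1, \ldots, n_d)$ and $(m_1, \ldots, m_d)$ is typically a strict subset of the $\lcm$-grid hyperplanes, so the additional cuts must be extracted from $S$'s internal structure. The planned argument exploits the interaction between the two refinement conditions: a sub-decomposition of an $(n_1, \ldots, n_d)$-cell of $S$ inherits cuts from the $(m_1, \ldots, m_d)$-grid of $S$ and, being constrained to lie in $\S_d$, must itself be refined by the quotient grid $(m_1/\gcd(n_1,m_1), \ldots, m_d/\gcd(n_d,m_d))$, producing precisely the additional cuts required to promote the union of the two grids inside $S$ to the full $\lcm$-grid.
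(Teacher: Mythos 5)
Your proposal takes essentially the same route as the paper: the identity $y^{n_1\cdots n_d}=\sum_{S\succeq D_{(n_1,\ldots,n_d)}}x^{|S|}$ is Lemma~\ref{Lem:0201}, and your structural lemma --- that $\N(S)$ is the set of componentwise divisors of a unique maximum $(N_1^*(S),\ldots,N_d^*(S))$ --- is precisely the paper's notion of $\gcd(S)$ together with Lemma~\ref{Lem:0202}. The paper then performs a $d$-fold M\"obius inversion on the gcd-classified series $S_{d,(r_1,\ldots,r_d)}(x)$, while you interchange the order of summation first and factor the inner sum coordinatewise via~\eqref{Eq:0101}; these are equivalent repackagings.

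However, your sketch of the lcm-closure step has a genuine gap. You assert that the sub-decomposition of an $(n_1,\ldots,n_d)$-cell, ``being constrained to lie in $\S_d$, must itself be refined by the quotient grid $(m_1/\gcd(n_1,m_1),\ldots,m_d/\gcd(n_d,m_d))$.'' Membership in $\S_d$ together with the inherited cuts does not force this. Take $d=1$, $n_1=2$, $m_1=3$, and look at the cell $(0,1/2)$: the only $m_1$-grid cut it contains is $1/3$, which rescales to $2/3$, and an element of $\S_1$ with a cut at $2/3$ need not have one at $1/3$ (for instance, a $2$-split followed by a $3$-split of the right half has cuts $\{1/2, 2/3, 5/6\}$). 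So the quotient-grid cuts are \emph{not} automatic from $\S_d$-membership; the correct argument has to exploit both global refinement hypotheses at once. For $d=1$ one clean route is to show by strong induction on $|S|$ that $S\succeq H_{1,r}$ if and only if $r$ divides $N(S):=\gcd\{1/(b-a):(a,b)\in S\}$; this makes $\N(S)$ a divisor ideal for free. The paper is also terse at exactly this point (``it follows that $S\succeq\{H_{i,\ell_i}((0,1)^d)\}$''), so this is where both treatments need the most care, but the particular justification you wrote down does not hold as stated.
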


\subsection{Natural exact covering systems}\label{Sec:0103}

Theorem~\ref{Thm:0101} generalizes a recent result on natural exact covering systems, which we describe here. Given $a \in \mZ, n \in \mN$, we let $\res{a}{n}$ denote the residue class $\set{ x \in \mZ: x \equiv a~\tn{mod}~n}$. Given a residue class $\res{a}{n}, r \in \mN$, and $i \in \set{0,1,\ldots, r-1}$, define
\[
E_{i,r} (\res{a}{n}) = \res{in + a}{rn}.
\]
Observe that 
\begin{equation}\label{Eq:010a}
\set{ E_{i,r}(\res{a}{n}) : i \in \set{0, \ldots, r-1}}
\end{equation}
 partition $\res{a}{n}$, and so we can think of~\eqref{Eq:010a} as an $r$-splitting of $\res{a}{n}$. We then define the set of natural exact covering systems (NECS) $\C$ recursively as follows:

\begin{itemize}
\item
$\set{\res{0}{1}} \in \C$ --- this is the trivial NECS with one residual class.
\item
For all $C \in \C$, residue class $\res{a}{n} \in C$, and integer $r \geq 2$,
\[
(C \setminus \set{ \res{a}{n} }) \cup \set{ E_{i,r}(\res{a}{n}) : i \in \set{0,\ldots, r-1}}\in \C.
\]
\end{itemize}

Notice that $\res{0}{1} = \mZ$, and each NECS $C \in \C$ consists of a collection of residue classes that partition $\mZ$. Also, given $n \in \mN$, let $\C_{n} \subseteq \C$ denote the set of NECS with exactly $n$ residue classes, and let $c(n) = |\C_n|$. For example, 
\[
\set{\res{0}{4}, \res{2}{8}, \res{6}{8}, \res{1}{6}, \res{3}{6}, \res{5}{6}}
\]
is an element in $\C_6$ as it can be obtained from $2$-splitting $\res{0}{1}$, then $2$-splitting $\res{0}{2}$, then $2$-splitting $\res{2}{4}$, then $3$-splitting $\res{1}{2}$. Recently, Goulden, Granville, Richmond, and Shallit~\cite{GouldenGRS18} showed the following:

\begin{theorem}\label{Thm:0102}
Let $y = \sum_{n \geq 1} c(n) x^n$. Then $y$ satisfies the function equation
\[
x = \sum_{n \geq 1} \mu(n)y^n.
\]
\end{theorem}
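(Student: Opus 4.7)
The plan is to prove the functional equation via a refined enumeration of NECS parameterized by the gcd of the moduli. For each pair $r, n \geq 1$, let
\[
a_r(n) := |\set{C \in \C_n : r \mid g(C)}|,
\]
where $g(C)$ denotes the gcd of the moduli of the residue classes in $C$. The central combinatorial identity to establish is $a_r(n) = [x^n]\, y^r$ for all $r, n \geq 1$.

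This identity follows from a bijection between $r$-tuples $(C_0, \ldots, C_{r-1}) \in \C^r$ with $|C_0| + \cdots + |C_{r-1}| = n$ and NECS $C \in \C_n$ satisfying $r \mid g(C)$. Given a tuple, one assembles an NECS by first performing the root $r$-split of $\res{0}{1}$ into $\set{\res{0}{r}, \ldots, \res{r-1}{r}}$, and then refining each $\res{i}{r}$ according to the splitting sequence for $C_i$, transported via the natural isomorphism $\res{a}{m} \leftrightarrow \res{ra + i}{rm}$ between $\mZ$ and $\res{i}{r}$. Conversely, given $C$ with $r \mid g(C)$, every class of $C$ lies in exactly one $\res{i}{r}$; collecting these subcollections and transporting back to $\mZ$ recovers the tuple. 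The bijection immediately yields $a_r(n) = \sum_{n_0 + \cdots + n_{r-1} = n}\prod_i c(n_i) = [x^n]\, y^r$.

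Granting the central identity, the conclusion is quick. Any non-trivial NECS $C$ begins with a first split of some arity $p \geq 2$, after which every modulus is divisible by $p$, so $g(C) \geq 2$ whenever $|C| \geq 2$. Hence $g(C) = 1$ if and only if $n = 1$, and the M\"obius identity $\sum_{r \mid m}\mu(r) = [m = 1]$ then yields
\[
\sum_{r \geq 1}\mu(r)\, a_r(n) \;=\; \sum_{C \in \C_n}\sum_{r \mid g(C)}\mu(r) \;=\; \sum_{C \in \C_n}[g(C) = 1] \;=\; [n = 1].
\]
Substituting $a_r(n) = [x^n]\, y^r$ and reading off coefficients gives $x = \sum_{r \geq 1}\mu(r)\, y^r$, as desired.

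The main obstacle is the surjectivity half of the bijection: given $C$ with $r \mid g(C)$, one must verify that the restriction of $C$ to each $\res{i}{r}$, transported back to $\mZ$, is itself an NECS and not merely a residue-class partition. I would tackle this via a commutation lemma stating that whenever $r \mid g(C)$, any splitting sequence producing $C$ can be rewritten so that its very first operation is an $r$-split. The argument proceeds by induction on the number of splits, leveraging the fact that an $r$-split followed by matching $s$-splits of all $r$ children yields the same refinement as an $s$-split followed by matching $r$-splits of all $s$ children. Once the root $r$-split has been isolated at the beginning, the remaining operations decompose into $r$ independent splitting sequences within the classes $\res{i}{r}$, supplying the desired NECS structure on each $C_i$.
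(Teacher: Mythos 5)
Your argument is correct and is in substance the same as the proof this paper gives for the generalized Theorem~\ref{Thm:0101} (which Goulden et al.\ also used for the $d=1$ case, as the paper remarks): stratify $\C_n$ by divisibility of the moduli gcd, establish the product formula $a_r(n)=[x^n]y^r$ via an assemble/disassemble bijection, and then apply classical M\"obius inversion. Your central identity plays the role of Lemma~\ref{Lem:0201}, and the fact that ``$r$ divides the numeric gcd'' coincides with ``the NECS refines the $r$-split'' plays the role of Lemma~\ref{Lem:0202}.

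One caution about the commutation lemma as you have sketched it. You propose to take a splitting sequence for $C$ and ``rewrite it so that its very first operation is an $r$-split'' by bubbling an $r$-split to the front via the swap of consecutive $r$- and $s$-splits. But a splitting sequence for $C$ with $r \mid g(C)$ need not contain any $r$-split at all: for instance, a single $6$-split gives $g(C)=6$, which is divisible by $r=2$, yet the sequence has no $2$-split to move. You also need the complementary fact that a $pq$-split can be replaced by a $p$-split followed by $q$-splits of all children. A cleaner induction avoids hunting for an $r$-split entirely: induct on $|C|$. If the first split in some sequence has arity $p$, set $\ell=\lcm(r,p)$. All moduli of $C$ are divisible by $\ell$, so each transported sub-NECS $\tilde C_j$ has moduli gcd divisible by $\ell/p$ and fewer than $|C|$ classes; by induction $\tilde C_j$ refines the $(\ell/p)$-split. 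Reassembling, $C$ refines ``$p$-split then $(\ell/p)$-split each'', which is precisely the $\ell$-split, which in turn refines the $r$-split since $r\mid\ell$. With this repackaging your surjectivity argument is complete, and the rest of the proof goes through as written.
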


Not only does Theorem~\ref{Thm:0102} provide a formula for the number of NECS with a given residual classes, it also shows that the generating series with coefficients $c(n)$ is exactly the compositional inverse of the familiar M\"obius power series. In that light, Theorem~\ref{Thm:0101} is a generalization of Theorem~\ref{Thm:0102}, as we provide a formula that gives $s_d(n)$ while also showing that they are the coefficients of the compositional inverse of the generalized M\"obius series. Moreover, Theorem~\ref{Thm:0101} implies that $s_1(n) = c(n)$ for every $n \geq 1$ (i.e., the number of ways to split $(0,1)$ into $n$ subintervals using a sequence of splitting operations is equal to the number of NECS of $\mZ$ with $n$ classes). 

To the best of our knowledge, NECS first appeared in the mathematical literature in Porubsk{\`y}'s work in 1974~\cite{Porubsky74}. More broadly, a set of residual classes $C = \set{ \res{a_i}{n_i} : i \in [k]}$ is a \emph{covering system} of $\mZ$ if $\bigcup_{i=1}^k \res{a_i}{n_i} = \mZ$ (i.e., the residual classes need not be disjoint or result from a sequence of splitting operations). An ample amount of literature has been dedicated to studying covering systems since they were introduced by Erd\H{o}s~\cite{Erdos50}. The reader may refer to~\cite{PorubskyS02, Sun21, ZaleskiZ20, Znam82} and others for broader expositions on the topic.

\subsection{Roadmap of the manuscript}

This manuscript is organized as follows.
In Section~\ref{Sec:02}, we introduce some helpful notions (such as the gcd of a decomposition) and prove Theorem~\ref{Thm:0101}.
Then in Section~\ref{Sec:03}, we take a small detour to study $a_d(n)$, the coefficients of the multiplicative inverse of the generalized M\"obius series. We establish a combinatorial interpretation for $a_d(n)$, and prove some results that we'll rely on in Section~\ref{Sec:04}, where we prove an asymptotic formula for $s_d(n)$ and study its growth rate. Finally, we establish a bijection between $\S_{1,n}$ and $\C_n$ in Section~\ref{Sec:05}, and give an example of how studying hypercube decompositions can lead to results on NECS.

\section{Proof of Theorem~\ref{Thm:0101}}\label{Sec:02}

In this section, we prove Theorem~\ref{Thm:0101} using a $d$-fold variant of M\"obius inversion. We remark that the structure of our proof is somewhat similar to Goulden et al.'s corresponding argument for NECS~\cite[Theorem 3]{GouldenGRS18}.

Given integers $r_1, \ldots, r_d \in \mN$, let $D_{(r_1, \ldots, r_d)} \in \S_d$ denote the decomposition obtained by 
\begin{itemize}
\item
starting with $(0,1)^d$;
\item
for $i= 1,\ldots, d$, $r_i$-split all regions in the current decomposition in coordinate $i$.
\end{itemize}

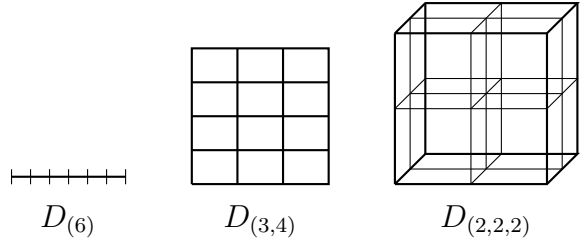
\begin{figure}[h!]
\begin{center}
\begin{tabular}{ccccc}
\begin{tikzpicture}
[scale=0.25]
\def\x{0.4}
\draw[thick] (0,0) -- (6,0);
\draw (0,{\x}) -- (0, {-\x});
\draw (1,{\x}) -- (1, {-\x});
\draw (2,{\x}) -- (2, {-\x});
\draw (3,{\x}) -- (3, {-\x});
\draw (4,{\x}) -- (4, {-\x});
\draw (5,{\x}) -- (5, {-\x});
\draw (6,{\x}) -- (6, {-\x});
\end{tikzpicture}
&\quad &
\begin{tikzpicture}
[scale=0.15, thick]
\draw[thick](0,0) -- (12,0) -- (12,12) -- (0,12) -- (0,0);
\draw(0,3) -- (12,3);
\draw(0,6) -- (12,6);
\draw(0,9) -- (12,9);
\draw(4,0) -- (4,12);
\draw(8,0) -- (8,12);
\end{tikzpicture}
&\quad &
\begin{tikzpicture}
[scale=0.2]
%increase to vertically lengthen cube, decrease to flatten
\def\ys{1} 
%axes
%\draw[thick, ->] (0,0) -- (12.5,0) node[anchor = west] {$1$};
%\draw[thick, ->] (0,0) -- (3, {3*\ys}) node[anchor = south] {$2$};
%\draw[thick, ->] (0,0) -- (0,12.5) node[anchor = south] {$3$};
%cube
\draw[thick] (0,0) -- (10,0) -- ( 12, {2* \ys}) -- (2,{2* \ys}) -- (0,0);
\draw[thick] (0,10) -- (10,10) -- (12,{10 + 2* \ys}) -- (2,{10 + 2* \ys}) -- (0,10);
\draw[thick] (0,0) -- (0,10);
\draw[thick] (10,0) -- (10,10);
\draw[thick] (12,{2* \ys}) -- (12,{10 + 2* \ys});
\draw[thick] (2,{2* \ys}) -- (2,{10 + 2* \ys});
%partition lines
\draw (5,0) -- (7,{2* \ys}) -- (7,{10 + 2* \ys}) -- (5,10) -- (5,0);
\draw (1,{\ys}) -- (11,{\ys}) -- (11,{10+\ys}) -- (1,{10+\ys}) -- (1,{\ys});
\draw (6,{\ys}) -- (6,{10+\ys});
\draw (5,5) -- (7,{5+2*\ys});
\draw (0,5) -- (10,5) -- (12, {5+2*\ys}) -- (2,{5+2*\ys}) -- (0,5);
\draw (1,{5+\ys}) -- (11,{5+\ys});
\end{tikzpicture}\\
$D_{(6)}$ 
&\quad&
$D_{(3,4)}$
&\quad&
$D_{(2,2,2)}$
\end{tabular}
\caption{Examples of decompositions of the form $D_{(r_1,\ldots, r_n)}$.}\label{Fig:0201}
\end{center}
\end{figure}

Figure~\ref{Fig:0201} illustrates a few examples of these decompositions. Notice that $D_{(r_1, \ldots, r_d)}$ has $\prod_{i=1}^d r_i$ regions of identical shape and size. Next, given $S,S' \in \S_d$, we say that $S'$ \emph{refines} $S$ --- denoted $S' \succeq S$ --- if $S'$ can be obtained from $S$ by a (possibly empty) sequence of splitting operations. For example, in Figure~\ref{Fig:0202}, we have $S_3 \succeq S_1$ and $S_3 \succeq S_2$, while $S_1, S_2$ are incomparable. It is not hard to see that $\succeq$ imposes a partial order on $\S_d$.

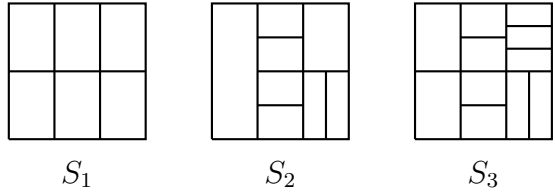
\begin{figure}[ht]
\begin{center}
\begin{tabular}{ccccc}
\begin{tikzpicture}
[scale=0.15, thick]
\draw(0,0) -- (12,0) -- (12,12) -- (0,12) -- (0,0);
\draw(4,0) -- (4,12);
\draw(8,0) -- (8,12);
\draw(0,6) -- (12,6);
\end{tikzpicture}
&\quad&
\begin{tikzpicture}
[scale=0.15, thick]
\draw(0,0) -- (12,0) -- (12,12) -- (0,12) -- (0,0);
\draw(4,0) -- (4,12);
\draw(8,0) -- (8,12);
\draw(4,6) -- (8,6);
\draw(4,3) -- (8,3);
\draw(4,9) -- (8,9);
\draw(12,6) -- (8,6);
\draw(10,0) -- (10,6);
\end{tikzpicture}
&\quad&
\begin{tikzpicture}
[scale=0.15, thick]
\draw(0,0) -- (12,0) -- (12,12) -- (0,12) -- (0,0);
\draw(4,0) -- (4,12);
\draw(8,0) -- (8,12);
\draw(0,6) -- (12,6);
\draw(4,3) -- (8,3);
\draw(4,9) -- (8,9);
\draw(10,0) -- (10,6);
\draw(8,8) -- (12,8);
\draw(8,10) -- (12,10);
\end{tikzpicture}\\
$S_1$ && $S_2$ && $S_3$
\end{tabular}
\caption{Illustrating the refinement relation $\succeq$ on $\S_d$.}\label{Fig:0202}
\end{center}
\end{figure}

Also, it will be convenient to define the notion of scaling. Given regions $R, R' \subseteq (0,1)^d$, notice there is a unique affine function $L_{R \to R'}: \mR^d \to \mR^d$ that preserves the lexicographical order of vectors, while satisfying $\set{ L_{R \to R'}(x) : x \in R} = R'$. More explicitly, let 
\begin{align*}
R &= (a_1, b_1) \times \cdots \times (a_d, b_d), \\
R' &= (a'_1, b'_1) \times \cdots \times (a'_d, b'_d).
\end{align*}
Then $L_{R \to R'} : \mR^d \to \mR^d$ where
\[
\left[L_{R \to R'}(x) \right]_i = a'_i + \frac{b'_i-a'_i}{b_i-a_i}(x-a_i)
\]
for every $i \in [d]$ is the function that satisfies the aforementioned properties. Next, given regions $R, R', R'' \subseteq (0,1)^d$, we define
\[
\scale_{R \to R'}(R'') = \set{L_{R \to R'}(x) : x \in R''}.
\]
We will always apply this scaling function in cases where $R'' \subseteq R$. Thus, intuitively, $\scale_{R \to R'}(R'')$ returns a region that's contained in $R'$ such that the relative position of $\scale_{R \to R'}(R'')$ inside $R'$  is the same of that of $R''$ inside $R$. More importantly, given a decomposition $S \in \S_d$, 
\[
\set{ \scale_{(0,1)^d \to R'}(R'') : R'' \in S}
\]
is a collection of sets that can be obtained from iteratively splitting $R'$. Conversely, if $\set{R_1,\ldots, R_n}$ is a collection of sets obtained from applying a sequence of splitting operations to region $R$, then 
\[
\set{ \scale_{R \to (0,1)^d}(R_i) : i \in [n]}
\]
is an element of $\S_{d,n}$. We then have the following:

\begin{lemma}\label{Lem:0201}
Let $y = \sum_{n \geq 1} s_{d}(n) x^n$. Then for all integers $r_1,\ldots, r_d \in \mN$,
\[
\sum_{\substack{S \in \S_d \\ S \succeq D_{(r_1, \ldots, r_d)}}} x^{|S|} = y^{ \prod_{i=1}^d r_i}.
\]
\end{lemma}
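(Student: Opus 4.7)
The plan is to set up a size-preserving bijection between refinements of $D_{(r_1,\ldots,r_d)}$ and $m$-tuples of decompositions, where $m = \prod_{i=1}^d r_i$. Let $R_1, \ldots, R_m$ denote the (congruent) regions of $D_{(r_1,\ldots,r_d)}$. For any $S \succeq D_{(r_1,\ldots,r_d)}$, every region of $S$ lies entirely inside a unique $R_j$, because $\{R_1,\ldots,R_m\}$ partitions $(0,1)^d$ and $S$ is obtained from $D_{(r_1,\ldots,r_d)}$ by further splittings. So it is natural to define
\[
\Phi(S) = (S_1, \ldots, S_m), \quad \text{where} \quad S_j = \set{ \scale_{R_j \to (0,1)^d}(R) : R \in S,\ R \subseteq R_j}.
\]

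The key step is to verify that $S_j \in \S_d$ for each $j$, and conversely that any $m$-tuple in $\S_d^m$ arises this way. For the forward direction, I would fix a splitting sequence witnessing $S \succeq D_{(r_1,\ldots,r_d)}$ and reorganize it: since each splitting operation acts on a single region (whose closure is contained in some $R_j$), the subsequence of operations confined to descendants of $R_j$ constitutes a valid splitting sequence turning $R_j$ into the sub-collection $\set{ R \in S : R \subseteq R_j}$. Applying $\scale_{R_j \to (0,1)^d}$ to this sequence preserves its validity step-by-step (splitting operations commute with affine rescalings of coordinates), and therefore produces a sequence of splittings of $(0,1)^d$ realizing $S_j$. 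For the reverse direction, given any $(S_1,\ldots,S_m) \in \S_d^m$, I would first realize $D_{(r_1,\ldots,r_d)}$ by its defining splitting sequence, then for each $j$ rescale the splitting sequence of $S_j$ via $L_{(0,1)^d \to R_j}$ and apply it inside $R_j$; concatenating these sequences yields an element of $\S_d$ refining $D_{(r_1,\ldots,r_d)}$ whose image under $\Phi$ is $(S_1,\ldots,S_m)$.

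Once the bijection is in hand, the identity is immediate. Because every region of $S$ lies in exactly one $R_j$, we have $|S| = \sum_{j=1}^m |S_j|$, so
\[
\sum_{\substack{S \in \S_d \\ S \succeq D_{(r_1, \ldots, r_d)}}} x^{|S|}
= \sum_{(S_1,\ldots,S_m) \in \S_d^m} x^{|S_1|+\cdots+|S_m|}
= \prod_{j=1}^m \left( \sum_{S_j \in \S_d} x^{|S_j|}\right)
= y^m.
\]

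The main obstacle is the verification that splitting sequences may be reorganized and rescaled in the manner above. This is intuitively clear from the fact that each splitting is a local operation on a single region, but writing it down cleanly demands some care: one must argue both that operations touching different $R_j$'s commute, so the sequence can be sorted by $j$, and that scaling by an axis-aligned affine map sends $H_{i,p}(R)$ to $H_{i,p}(\scale(R))$, so the splitting structure is preserved. Both points are routine but are the only non-tautological content in the argument.
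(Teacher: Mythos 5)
Your proof takes essentially the same approach as the paper: both establish a size-preserving bijection between $\{S \in \S_d : S \succeq D_{(r_1,\ldots,r_d)}\}$ and $\S_d^m$ via the scaling maps $\scale_{R_j \to (0,1)^d}$ and their inverses, then read off the generating-series identity. The only difference is one of emphasis: you explicitly flag and sketch the reorganize-and-rescale argument that a splitting sequence for $S$ can be sorted into per-region subsequences (since operations on disjoint regions commute) and transported by an axis-aligned affine map; the paper treats this as self-evident and states it in one clause. Your extra care there is reasonable, and the identification of that step as the only nontrivial content is accurate.
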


\begin{proof}
For convenience, let $n = \prod_{i=1}^d r_i$. First, observe the following bijection between $\set{S \in \S_d : S \succeq D_{(r_1, \ldots, r_d)}}$ and $\S_d^{n}$: Let $B_1, \ldots, B_n$ be the regions in $D_{(r_1, \ldots, r_d)}$. Then, given $n$ decompositions $S_1, \ldots, S_n \in \S_d$, 
\[
S = \bigcup_{j=1}^n \set{ \scale_{(0,1)^d \to B_j}(R) : R \in S_j}
\]
gives a decomposition of $(0,1)^d$ that refines $D_{(r_1, \ldots, r_d)}$. On the other hand, given $D \in \S_d$ that refines $D_{r_1,\ldots, r_n}$, define $D_j \subseteq S$ such that
\[
D_j = \set{ R \in S : R \subseteq B_j}
\]
for every $j \in [n]$. By assumption that $S \succeq D_{(r_1,\ldots,r_d)}$, we know that $D_j$ can be obtained from applying a sequence of splitting operations to $B_j$ for every $j$. Hence, each of 
\[
S_j = \set{ \scale_{B_j \to (0,1)^n} (R) : R \in D_j}
\]
is an element of $\S_d$. Doing so for each of $D_1, \ldots, D_n$ results in an $n$-tuple of decompositions that corresponds to $S$. Thus, it follows that
\[
 y^n = \sum_{S_1, \ldots, S_n \in \S_d} x^{\sum_{i=1}^n |S_i|} =  \sum_{\substack{ S \in \S_d \\ S \succeq D_{(r_1, \ldots, r_d)}}} x^{|S|}.
\]
\end{proof}

Next, given $S \in \S_d$, we say that $\gcd(S) = (r_1, \ldots, r_d)$ if
\begin{itemize}
\item
$S \succeq D_{(r_1, \ldots, r_d)}$;
\item
there doesn't exist $(r_1',\ldots, r_d') \neq (r_1, \ldots, r_d)$ where $(r_1',\ldots, r_d') \geq (r_1, \ldots, r_d)$ and $S \succeq D_{(r_1', \ldots, r_d')}$.
\end{itemize}

For example, in Figure~\ref{Fig:0202}, $\gcd(S_1) = \gcd(S_3) = (3,2)$, while $\gcd(S_2) = (3,1)$. As we shall see later, this notion of the gcd of a decomposition corresponds well to the existing notion of the gcd of an NECS. Next, define 
\[
s_{d, (r_1,\ldots, r_d)}(n) = \left| \set{S \in \S_{d,n} : \gcd(S) = (r_1, \ldots, r_d) } \right|,
\]
and the series
\[
S_{d,(r_1,\ldots, r_d)}(x) = \sum_{n \geq 1} s_{d, (r_1,\ldots, r_d)}(n) x^n.
\]
In particular, consider the case when $r_1 = \cdots = r_d = 1$. If $S \in \S_{d,n}$ for some $n \geq 2$, then the splitting sequence for $S$ is non-empty, and so the $\gcd(S) \neq (1, \ldots, 1)$. Thus, we see that the only decomposition $S$ where $\gcd(S) = (1, \ldots, 1)$ is the trivial decomposition $\set{ (0,1)^d}$. This implies that 
\[
s_{d, (1, \ldots, 1)}(n) = 
\begin{cases}
1 & \tn{if $n=1$;}\\
0 & \tn{otherwise.}
\end{cases}
\]
Hence, it follows that $S_{d,(1,\ldots, 1)}(x) = x$. For general $r_1, \ldots, r_d$, we have the following.

\begin{lemma}\label{Lem:0202}
For all $d,n, r_1, \ldots, r_d \in \mN$,
\[
\sum_{\substack{S \in \S_d \\  S \succeq D_{(r_1, \ldots, r_d)}}} x^{|S|} = \sum_{a_1, \ldots, a_d \geq 1} S_{d,(a_1r_1,\ldots, a_dr_d)}(x) 
\]
\end{lemma}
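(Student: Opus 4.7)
The plan is to partition the set on the left according to $\gcd(S)$. Specifically, I will establish the disjoint union
\[
\set{S \in \S_d : S \succeq D_{(r_1,\ldots,r_d)}} = \bigsqcup_{a_1,\ldots,a_d \geq 1} \set{S \in \S_d : \gcd(S) = (a_1 r_1, \ldots, a_d r_d)},
\]
from which the stated identity follows immediately by applying $\sum_S x^{|S|}$ to both sides and invoking the definition of $S_{d,(a_1 r_1, \ldots, a_d r_d)}(x)$.

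For the containment $\supseteq$, suppose $\gcd(S) = (a_1 r_1, \ldots, a_d r_d)$. By definition of $\gcd$, we have $S \succeq D_{(a_1 r_1, \ldots, a_d r_d)}$. Moreover, $D_{(a_1 r_1, \ldots, a_d r_d)}$ can be constructed from $D_{(r_1,\ldots,r_d)}$ by $a_i$-splitting every region along coordinate $i$ for each $i \in [d]$ in turn, so $D_{(a_1 r_1, \ldots, a_d r_d)} \succeq D_{(r_1,\ldots,r_d)}$; transitivity of $\succeq$ then yields $S \succeq D_{(r_1,\ldots,r_d)}$.

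For the reverse containment $\subseteq$, suppose $S \succeq D_{(r_1,\ldots,r_d)}$ and write $(g_1, \ldots, g_d) = \gcd(S)$. I must show $r_i \mid g_i$ for every $i$. The key ingredient is a \emph{join property}: whenever $S \succeq D_{(u_1,\ldots,u_d)}$ and $S \succeq D_{(v_1,\ldots,v_d)}$, one has $S \succeq D_{(\lcm(u_1,v_1),\ldots,\lcm(u_d,v_d))}$. Granting this, applying it to the pair $(g_1,\ldots,g_d)$ and $(r_1,\ldots,r_d)$ produces a tuple $(\lcm(g_i, r_i))_{i=1}^d$ which is componentwise $\geq (g_i)$ and still satisfies the refinement relation, so the Pareto-maximality of $\gcd(S)$ forces $\lcm(g_i, r_i) = g_i$, i.e., $r_i \mid g_i$.

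The main obstacle is the join property. I expect to prove it by induction on $|S|$, the base case $|S|=1$ being vacuous. For the inductive step, fix a splitting sequence realizing $S$ from $(0,1)^d$ and look at its final operation, an $i$-coordinate $p$-split of some region $R$. Because $S$ refines both $D_{(u_1,\ldots,u_d)}$ and $D_{(v_1,\ldots,v_d)}$, the region $R$ must sit inside a single cell of each, and $p$ must divide the quotient $u_i / (\text{finer subdivisions already present})$ in each factorization. Careful bookkeeping should show that undoing this last split yields $S' \in \S_d$ with $S' \succeq D_{(u_1,\ldots,u_d)}$ and $S' \succeq D_{(v_1,\ldots,v_d)}$, whence by induction $S' \succeq D_{(\lcm(u_1,v_1),\ldots,\lcm(u_d,v_d))}$, and the omitted $p$-split is compatible with this refinement. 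Once the join property is in hand, the set equality and hence the lemma follow as outlined.
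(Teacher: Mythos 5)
Your overall strategy mirrors the paper's exactly: reduce the identity to the set equality
\[
\set{S \in \S_d : S \succeq D_{(r_1,\ldots,r_d)}} = \bigcup_{a_1,\ldots,a_d \geq 1} \set{S \in \S_d : \gcd(S) = (a_1 r_1, \ldots, a_d r_d)},
\]
handle $\supseteq$ via transitivity of $\succeq$, and handle $\subseteq$ via an lcm/join argument combined with the Pareto-maximality in the definition of $\gcd(S)$. You also correctly isolate the real technical content as a ``join property'' for the refinement order; the paper simply asserts (without proof) the corresponding single-coordinate claim that $S \succeq \set{H_{i,r_i}((0,1)^d)}$ and $S \succeq \set{H_{i,b_i}((0,1)^d)}$ force $S \succeq \set{H_{i,\lcm(r_i,b_i)}((0,1)^d)}$, together with the implicit step of combining these coordinate-wise refinements into a contradiction with maximality. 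So on the structural level you and the paper agree, and you are if anything more explicit about what needs proving.

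However, your proposed proof of the join property by induction on $|S|$ via ``undoing the last split'' has a genuine gap. The issue is that the last operation in a realizing splitting sequence for $S$ need not be one that can be removed while preserving both refinement hypotheses. Take $d=1$, $S = D_{(6)}$, $u_1 = 2$, $v_1 = 3$. Realize $S$ by: $2$-split $(0,1)$, then $3$-split $(0,\tfrac12)$, then $3$-split $(\tfrac12,1)$. Undoing the final step yields $S' = \set{(0,\tfrac16),(\tfrac16,\tfrac13),(\tfrac13,\tfrac12),(\tfrac12,1)}$. Then $S' \succeq D_{(2)}$ still holds, but $S' \not\succeq D_{(3)}$, since the region $(\tfrac12,1)$ of $S'$ straddles the cut at $\tfrac23$. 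So the inductive hypothesis cannot be applied to $S'$, and the ``careful bookkeeping'' you defer is precisely the missing argument. There is also no obvious way to choose a better final split: any linear splitting sequence can only begin from one of $D_{(u)}$ or $D_{(v)}$, not both, so removing its last step does not symmetrically respect the two refinements. A correct proof of the join property would need a different mechanism (for instance, inducting on the grid structure and showing that the scaled restrictions of $S$ to the cells of the coarser grid inherit both refinement properties, or arguing as Goulden et al.\ do for NECS); as written, your induction step fails.
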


\begin{proof}
It suffices to show that, given fixed $d,n$, and $r_1, \ldots, r_d$,
\[
\set{ S \in \S_{d,n} : S \succeq D_{(r_1, \ldots, r_d)}} = \bigcup_{a_1, \ldots, a_d \geq 1} \set{ S \in \S_{d,n} : \gcd(S) = (a_1r_1, \ldots, a_dr_d)},
\]
since the sets making up the union on the right hand side are mutually disjoint.

For $\supseteq$, notice that if $\gcd(S) = (a_1r_1, \ldots, a_dr_d)$, then $S \succeq D_{(r_1, \ldots, r_d)}$. It is also obvious that $D_{(a_1r_1, \ldots, a_dr_d)} \succeq D_{(r_1, \ldots, r_d)}$ for all $a_1, \ldots, a_d \geq 1$. Since $\succeq$ is a transitive relation, the containment holds.

We next prove $\subseteq$. Let $S \succeq D_{(r_1, \ldots, r_d)}$ and suppose $\gcd(S) = (b_1,\ldots, b_d)$. Therefore, for every coordinate $i \in [d]$, $S \succeq \set{ H_{i, r_i}((0,1)^d)}$ and $S \succeq \set{ H_{i, b_i}((0,1)^d)}$. Thus, if we let $\ell_i$ be the least common multiple of $r_i$ and $b_i$, it follows that $S \succeq \set{ H_{i, \ell_i}((0,1)^d)}$. Since $b_i$ was chosen maximally (by the definition of gcd), it must be that $b_i = \ell_i$ for every $i$. This means that $r_i | b_i$, and so there exists $a_i \in \mN$ where $b_i = a_ir_i$. This finishes our proof.
\end{proof}

With Lemmas~\ref{Lem:0201} and~\ref{Lem:0202}, we are now ready to prove a result that is slightly more general than Theorem~\ref{Thm:0101}.

\begin{theorem}\label{Thm:0201}
Let $d, r_1,\ldots, r_d \in \mN$, and $y = \sum_{n \geq 1} s_d(n)x^n$. Then
\[
S_{d, (r_1,\ldots, r_d)}(x)  = \sum_{n \geq 1} \mu_d(n) y^{\left(\prod_{i=1}^d r_i\right)n}.
\]
\end{theorem}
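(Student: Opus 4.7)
The plan is to combine Lemma~\ref{Lem:0201} and Lemma~\ref{Lem:0202} to obtain the identity
\[
y^{\prod_{i=1}^d r_i} = \sum_{a_1,\ldots,a_d \geq 1} S_{d,(a_1 r_1,\ldots,a_d r_d)}(x),
\]
and then invert it via a $d$-fold M\"obius inversion. Setting $t_i = a_i r_i$, this identity takes the form $G(r_1,\ldots,r_d) = \sum_{r_i \mid t_i \text{ for all } i} F(t_1,\ldots,t_d)$, where $G(r_1,\ldots,r_d) := y^{\prod r_i}$ and $F := S_{d,(\cdot)}$. Applying the classical one-dimensional M\"obius inversion once in each coordinate then yields
\[
F(r_1,\ldots,r_d) = \sum_{a_1,\ldots,a_d \geq 1} \left(\prod_{i=1}^d \mu(a_i)\right) G(a_1 r_1,\ldots,a_d r_d).
\]

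Next, substituting $G(a_1 r_1,\ldots,a_d r_d) = y^{(\prod_i r_i)(\prod_i a_i)}$ and grouping terms by $n = \prod_{i=1}^d a_i$, I obtain
\[
S_{d,(r_1,\ldots,r_d)}(x) = \sum_{n \geq 1} \left( \sum_{\substack{a_1,\ldots,a_d \geq 1 \\ \prod_i a_i = n}} \prod_{i=1}^d \mu(a_i) \right) y^{(\prod_i r_i) n}.
\]
By formula~\eqref{Eq:0102}, the inner coefficient is precisely $\mu_d(n)$, which gives the desired expression. Theorem~\ref{Thm:0101} then follows as the special case $r_1 = \cdots = r_d = 1$, since $S_{d,(1,\ldots,1)}(x) = x$.

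The main technical point to address is the legitimacy of the infinite-sum manipulations, since I am working in a power series ring and reshuffling doubly- and multiply-indexed sums. The key observation is that any $S$ with $\gcd(S) = (r_1,\ldots,r_d)$ refines $D_{(r_1,\ldots,r_d)}$ and therefore has at least $\prod_i r_i$ regions, so $x^{\prod r_i}$ divides $S_{d,(r_1,\ldots,r_d)}(x)$ in $\mR[[x]]$; similarly, $y^m$ has minimum degree $m$ because $s_d(1) = 1$. Consequently, for every fixed power of $x$, only finitely many tuples $(a_1,\ldots,a_d)$ contribute a nonzero term on either side of every identity above, so all reorderings and the $d$-fold M\"obius inversion go through coefficient-by-coefficient in the formal power series ring. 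With this justified, the algebraic steps themselves are routine.
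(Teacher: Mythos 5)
Your proof is correct and takes essentially the same approach as the paper: both derive the identity $y^{\prod_i r_i} = \sum_{a_1,\ldots,a_d \geq 1} S_{d,(a_1 r_1,\ldots,a_d r_d)}(x)$ from Lemmas~\ref{Lem:0201} and~\ref{Lem:0202} and then perform a $d$-fold M\"obius inversion. Your coordinate-by-coordinate inversion followed by grouping over $n = \prod_i a_i$ is a slightly cleaner organization than the paper's multiply-by-$\mu_d(n)$-and-sum presentation (which expands $\mu_d(n)$ via~\eqref{Eq:0102} and redistributes the factor $n$ across coordinates via~\eqref{Eq:0202}), and your explicit note that $S_{d,(r_1,\ldots,r_d)}(x)$ has valuation at least $\prod_i r_i$ makes the formal-power-series convergence argument explicit where the paper leaves it implicit.
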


\begin{proof}
From Lemmas~\ref{Lem:0201} and~\ref{Lem:0202}, we obtain that for every fixed $n \geq 1$,
\begin{equation}\label{Eq:0201}
y^{\left(\prod_{i=1}^d r_i \right)n} = \sum_{a_1, \ldots, a_d \geq 1} S_{d,(a_1r_1,\ldots, a_dr_dn)}(x). 
\end{equation}
In fact, we see that $y^{\left(\prod_{i=1}^d r_i \right)n} = \sum_{a_1, \ldots, a_d \geq 1} S_{d,(a_1q_1,\ldots, a_dq_d)}(x)$ for all $q_1, \ldots, q_d \in \mN$ where
\begin{equation}\label{Eq:0202}
\prod_{i=1}^d q_i = \left(\prod_{i=1}^d r_i \right)n.
\end{equation}
Now we multiply both sides of~\eqref{Eq:0201} by $\mu_d(n)$ and sum them over $n \geq 1$. On the left hand side, we obtain 
\[
\sum_{n \geq 1} \mu_d(n) y^{\left(\prod_{i=1}^d r_i\right)n}.
\]
For the right hand side, we have
\begin{align*}
& \sum_{n \geq 1} \mu_d(n)  \left( \sum_{a_1, \ldots, a_d \geq 1} S_{d,(a_1r_1,\ldots, a_dr_dn)}(x) \right)\\
\overset{\eqref{Eq:0102}}{=}{}& \sum_{n \geq 1} \sum_{\substack{n_1, \ldots, n_d \geq 1 \\ n_1n_2\cdots n_d=n}} \prod_{i=1}^d \mu(n_i)\left( \sum_{a_1, \ldots, a_d \geq 1} S_{d,(a_1r_1,\ldots, a_dr_dn)}(x) \right)\\
={}&  \sum_{n_1, \ldots, n_d \geq 1} \prod_{i=1}^d \mu(n_i)\left( \sum_{a_1, \ldots, a_d \geq 1} S_{d,(a_1r_1,\ldots, a_dr_d \left( \prod_{i=1}^d n_i \right))}(x) \right)\\
\overset{\eqref{Eq:0202}}{=}{}& \sum_{n_1, \ldots, n_d \geq 1} \prod_{i=1}^d \mu(n_i)\left( \sum_{a_1, \ldots, a_d \geq 1} S_{d,(n_1a_1r_1,\ldots, n_da_dr_d)}(x) \right)\\
%&=& \sum_{n_1, \ldots, n_d \geq 1} \prod_{i=1}^d \mu(n_i)\left( \sum_{a_1, \ldots, a_d \geq 1} S_{d,(n_1a_1r_1,\ldots, n_da_dr_d)}(x) \right)\\
={}& \sum_{b_1, \ldots, b_d \geq 1} S_{d,(b_1r_1,\ldots, b_dr_d)}(x) \left( \sum_{n_1 | b_1} \mu(n_1) \right) \cdots \left( \sum_{n_d | b_d} \mu(n_d) \right) \\
\overset{\eqref{Eq:0101}}{=}{}& S_{d, (r_1,\ldots, r_d)}(x).
\end{align*}
This finishes the proof.
\end{proof}

When $r_1= \cdots = r_d = 1$, Theorem~\ref{Thm:0201} specializes to $x = \sum_{n \geq 1} \mu_d(n) y^n$, and thus Theorem~\ref{Thm:0101} follows as a consequence.

\section{A Small Detour: The Sequences $a_d(n)$}\label{Sec:03}

Observe that, if we define the series
\[
M_d(z) = \sum_{n \geq 1} \mu_d(n)z^n,
\]
then the functional equation in Theorem~\ref{Thm:0101} can be restated simply as $M_d(y) = x$. We have shown earlier that $s_d(n)$ gives the coefficient of $x^n$ in $y$, hence finding a combinatorial interpretation of the coefficients of the compositional inverse of the series $M_d(z)$. In this section, we'll mainly focus on the coefficients of what's essentially the multiplicative inverse of $M_d(z)$. For every $d \geq 1$ and $n \geq 0$, define
\begin{equation}\label{Eq:0301}
a_d(n) = [z^n] \frac{z}{M_d(z)}.
\end{equation}
Then it follows that
\[
M_d(z) \left( \sum_{i \geq 0} a_d(i)z^i \right) = z ~\implies~ \sum_{k = 1}^n \mu_d(k) a_d(n-k) = [z^n] z.
\]
Since $\mu_d(1) = 1$ for all $d$, $a_d(n)$ satisfies the recurrence relation 
\begin{equation}\label{Eq:0302}
a_d(n) = \sum_{k =2}^{n+1} -\mu_d(k)a_d(n+1-k)
\end{equation}
for all $n \geq 1$, with the initial condition $a_d(0) = 1$. The following table gives some values of $a_d(n)$ for small $d$ and $n$:
\[
\begin{array}{l|rrrrrrrrrrrr}
n & 0 & 1 & 2 & 3 & 4 & 5 & 6 & 7 & 8 & 9 & 10 &\cdots \\
\hline
\href{https://oeis.org/A073776}{a_1(n)} &
1 & 1 & 2 & 3 & 6 & 9 & 17 & 28 & 50 & 83 & 147 & \cdots \\
%  & 249     435        742       1288       2207       3819       6561
a_2(n) &
1 & 2 & 6 & 15 & 42 & 108 & 291 & 766 & 2041 & 5395&  14328 & \cdots \\
%              37956    100689           266886           707743
a_3(n) &
1 & 3 & 12 & 42 & 156 & 558 & 2028 & 7318 & 26490 & 95730 & 346218 & \cdots 
%                  1251690               4526004             16364346             59169592
 \end{array}
 \]
The main goal of this section is to establish two results (Proposition~\ref{Prop:0301} and Lemma~\ref{Lem:0303}) that we will need when we study the asymptotic behavior and growth rate of $s_d(n)$. While we do so, we will take somewhat of a scenic route and uncover some properties of $a_d(n)$ along the way. 
 
\subsection{A lower bound for $a_d(n+1)/a_d(n)$}

When we prove an asymptotic formula for $s_d(n)$ in Section~\ref{Sec:04}, one of the requirements will be to show that $a_d(n) \geq 0$ for every $d \geq 1$ and $n \geq 0$. Likewise, Goulden et al.~\cite[Theorem 2]{GouldenGRS18} showed that $a_1(n) \geq 0$ with a complex analysis argument as a part of their work establishing an asymptotic formula for $c(n)$.

In this section, we describe the set $\tilde{\A}_{d,n}$ and show that $|\tilde{\A}_{d,n}| = a_d(n)$ for all $d \geq 1$ and $n\geq 0$. Having a combinatorial interpretation for $a_d(n)$ implies (among other things) that these coefficients are indeed nonnegative. Before we are able to define $\tilde{\A}_{d,n}$, we first need to introduce some intermediate objects $\B_{d,n}, \A_{d,n}$ and mention some of their relevant properties along the way. Given $d \in \mN$, we call a multiset $S= \set{s_1, \ldots, s_{\l}}$ a \emph{$d$-coloured prime set} if the following holds:
\begin{itemize}
\item
The elements $s_1, \ldots s_{\l}$ are (not necessarily distinct) prime numbers.
\item
Each of $s_1, \ldots, s_{\l}$ is assigned a colour from $[d]$, such that each instance of the same prime number must be assigned distinct colours.
\end{itemize}
We also define the weight of $S$ to be $w(S) = \prod_{i=1}^{\l} s_i  -1$, and the sign of $S$ to be $v(S) = (-1)^{\l+1}$. For every $n \geq 0$, let $\B_{d,i}$ denote the set of $d$-coloured prime sets with weight $i$. For instance, 
\[
\B_{2,11} = \set{ \set{2_1, 2_2, 3_1}, \set{2_1, 2_2, 3_2} }
\]
(where the assigned colour of a number is indicated in its subscript), and $\B_{2, 26} = \emptyset$. More generally, notice that if $n+1$ has prime factorization $n+1= p_1^{m_1}\cdots p_{k}^{m_k}$, then every element in $\B_{d,n}$ must contain exactly $m_i$ copies of $p_i$ for every $i \in [k]$. This implies that
\begin{itemize}
\item
$|\B_{d,n}| = \prod_{i=1}^{k} \binom{d}{m_i} = | \mu_d(n+1)|$, since there are $\binom{d}{m_i}$ ways to colour to the $m_i$ copies of $p_i$ for every $i \in [k]$;
\item
$v(B) = (-1)^{1+\sum_{i=1}^k m_i}$ for every element in $\B_{d,n}$.
\end{itemize}
Thus, we see that 
\begin{equation}\label{Eq:0303}
\sum_{B \in \B_{d,n}} v(B) = -\mu_d(n+1).
\end{equation}

Next, for every $d \geq 1$ and $n \geq 0$, let $\A_{d,n}$ to be the set of sequences $(A_1, \ldots, A_k)$ where
\begin{itemize}
\item
$A_i$ is a $d$-coloured prime set for every $i \in [k]$;
\item
$\sum_{i=1}^k w(A_i) = n$.
\end{itemize}
Also, we define the sign of $A$ to be $v(A) = \prod_{i=1}^k v(A_i)$. In other words, $v(A) = 1$ if there is an odd number of even-sized sets in the sequence $A$, and $v(A) = -1$ otherwise. For example, the following table lists all elements of $A_{1,n}$ for $0 \leq n \leq 7$, as well as their signs. To reduce cluttering, we used $|$ to indicate separation of sets, and suppressed the colour assignment of the numbers since there is only $d=1$ available colour in this case.
\[
\begin{array}{r|r|l|l}
n & a_1(n) & \set{A \in \A_{1,n} : v(A) = 1} & \set{A \in \A_{1,n} : v(A) = -1}\\
\hline
0 & 1 & () & \\
\hline
1 & 1 & (2)& \\
\hline
2 & 2 & (2|2), (3)& \\
\hline
3 & 3 & (2|2|2), (2|3), (3|2)& \\
\hline
4 & 6 & (2|2|2|2), (2|2|3), (2|3|2), (3|2|2), (3|3), (5)& \\
\hline
5 & 9 & (2|2|2|2|2), (2|2|2|3), (2|2|3|2), (2|3|2|2), (2|3|3), & (2,3) \\
&& 
(2|5),(3|2|2|2), (3|2|3), (3|3|2), (5|2) 
 & \\
\hline
6 & 17 &
(2|2|2|2|2|2), (2|2|2|2|3), (2|2|2|3|2), (2|2|3|2|2), &(2|2,3), (2,3|2) \\
&& 
(2|2|3|3), (2|2|5), (2|3|2|2|2), (2|3|2|3), (2|3|3|2), & \\
&& 
(2|5|2), (3|2|2|2|2), (3|2|2|3), (3|2|3|2), (3|3|2|2),& \\
&& 
 (3|3|3), (3|5), (5|2|2), (5|3), (7)  & 
\end{array}
\]
Then we have the following.

\begin{lemma}\label{Lem:0301}
For every $d \geq 1$ and $n \geq 0$, 
\[
a_d(n) = \sum_{A \in \A_{d,n}} v(A).
\]
\end{lemma}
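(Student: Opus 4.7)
The plan is to prove the identity by induction on $n$, showing that the signed weighted count $f_d(n) := \sum_{A \in \A_{d,n}} v(A)$ satisfies the same recurrence~\eqref{Eq:0302} as $a_d(n)$, with the same initial condition. The key structural observation is that a sequence $A = (A_1, \ldots, A_k) \in \A_{d,n}$ is determined by its first block $A_1$ together with the suffix $(A_2, \ldots, A_k)$, and the sign function $v$ factors multiplicatively across this split.

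For the base case, I would note that $\A_{d,0}$ consists solely of the empty sequence $()$ (since any nonempty $d$-coloured prime set has weight at least $1$), and by convention the empty product of signs is $1$, giving $f_d(0) = 1 = a_d(0)$. For the inductive step at $n \geq 1$, I would partition $\A_{d,n}$ according to the weight $m = w(A_1) \in \{1, \ldots, n\}$ of the first block. The map $(A_1, A_2, \ldots, A_k) \mapsto (A_1, (A_2, \ldots, A_k))$ is a bijection between $\{A \in \A_{d,n} : w(A_1) = m\}$ and $\B_{d,m} \times \A_{d,n-m}$, and since $v$ is defined as a product over blocks, $v(A) = v(A_1) \cdot v(A_2,\ldots,A_k)$. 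Summing,
\[
f_d(n) = \sum_{m=1}^{n} \left( \sum_{A_1 \in \B_{d,m}} v(A_1) \right) f_d(n-m).
\]
Applying~\eqref{Eq:0303}, the inner sum equals $-\mu_d(m+1)$, and substituting $k = m+1$ gives
\[
f_d(n) = \sum_{k=2}^{n+1} -\mu_d(k)\, f_d(n+1-k),
\]
which is exactly the recurrence~\eqref{Eq:0302} defining $a_d(n)$. The inductive hypothesis then yields $f_d(n) = a_d(n)$.

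There is not really a hard step here; the main care is in the bookkeeping of the sign and in verifying that the decomposition by the first block really is a bijection with the stated image set. In particular, one must confirm that every nonempty $d$-coloured prime set has weight at least $1$ (so the index $m$ genuinely starts at $1$ and the recursion is well-founded), and that the empty sequence contributes $v = 1$ to make the base case line up. Once those points are noted, the result is just the combinatorial translation of the recurrence that $a_d(n)$ already satisfies via~\eqref{Eq:0301}.
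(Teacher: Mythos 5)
Your proof is correct and takes essentially the same route as the paper: induction on $n$, peeling off the first block to produce the bijection $\A_{d,n} \to \bigcup_m \B_{d,m} \times \A_{d,n-m}$, using multiplicativity of $v$, and then invoking~\eqref{Eq:0303} to recover the recurrence~\eqref{Eq:0302}. Your index bookkeeping (summing $m$ from $1$ to $n$, so that $k=m+1$ runs $2$ to $n+1$) is in fact the clean version that matches~\eqref{Eq:0302} exactly.
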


\begin{proof}
We prove the claim by induction on $n$. First, $a_d(0) = 1$, and $\A_{d,0}$ consists of just the empty sequence $A = ()$, and $v(A) = 1$. Thus, the base case holds.

Now suppose $n \geq 1$, and let $A = (A_1, \ldots, A_k) \in \A_{d,n}$. Then $A_1 \in \B_{d,i}$ for some $1 \leq i \leq n-1$, and $A' = (A_2, \ldots, A_k) \in \A_{d,n-i}$. Thus, there is a natural bijection $g : \A_{d,n} \to \bigcup_{i=1}^{n-1} \B_{d,i} \times \A_{d, n-i}$. Moreover, notice that $v(A) = v(A_1)v(A')$. Thus, 

\begin{align*}
\sum_{A \in \A_{d,n}} v(A) 
&= \sum_{i=1}^{n-1} \left( \sum_{B \in \B_{d,i}} v(B) \sum_{A' \in \A_{d,n-i}}  v(A') \right) \\
& \overset{\eqref{Eq:0303}}{=} \sum_{i=1}^{n-1} - \mu_d(n+1) a_d(n-i) \\
&= \sum_{k=2}^n -\mu_d(k) a_d(n+1-k) \\
&\overset{\eqref{Eq:0302}}{=} a_d(n). \mbox{\qedhere}
\end{align*}
\end{proof}

Next, we show that there are always more sequences in $\A_{d,n}$ with positive signs than those with negative signs. Given a sequence $A = (A_1, \ldots, A_k) \in \A_{d,n}$, we say that a subsequence $(A_j, A_{j+1}, \ldots, A_{j+\l})$ of $A$ is \emph{odd, ascending, and repetitive} (OAR) if
\begin{enumerate}
\item
(Odd) $A_j= \set{\l}$, a singleton set, and $|A_{j+1}|, \ldots, |A_{j+\l}|$ are all odd.
\item
(Ascending) Let $c_0$ be the colour assigned to the element $\l \in A_j$. Then every element of $A_{j+1}$ is either greater than $\l$, or is equal to $\l$ and assigned a colour greater than $c_0$.
\item
(Repetitive) $A_{j+1} = A_{j+2} = \cdots = A_{j+\l}$.
\end{enumerate}
For example, $A=(\set{3}, \set{2}, \set{2},\set{3,5,11},\set{3,5,11}, \set{2,3})$ contains an OAR subsequence starting at $j =3$ with $\ell = 2$ (regardless of $d$ and the colour assigned to the elements). Then we have the following.

\begin{lemma}\label{Lem:0302}
For every $d,n \geq 1$,
\[
|\set{A \in \A_{d,n} : v(A) = -1} | \leq |\set{A \in \A_{d,n} : v(A) = 1}|.
\]
\end{lemma}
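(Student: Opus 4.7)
The plan is to construct a sign-reversing involution on a subset of $\A_{d,n}$ that covers every negative-sign sequence, producing an injection from $\{A : v(A) = -1\}$ into $\{A : v(A) = +1\}$. The OAR condition is tailored precisely to identify configurations that can be merged or split while preserving the total weight: if $A_j = \{\ell\}$ is a singleton, $A_{j+1} = \cdots = A_{j+\ell} = B$ with $B$ odd-sized, and $P$ denotes the product of the elements of $B$, then
\[
w(\{\ell\}) + \ell \cdot w(B) \;=\; (\ell - 1) + \ell(P - 1) \;=\; \ell P - 1 \;=\; w(\{\ell\} \cup B),
\]
so replacing the $\ell + 1$ sets $(A_j, A_{j+1}, \ldots, A_{j+\ell})$ by the single even-sized set $\{\ell\} \cup A_{j+1}$ preserves $n$.

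I would therefore define two mutually inverse local operations on $A = (A_1, \ldots, A_k) \in \A_{d,n}$. A \emph{merge} at position $j$ applies when $(A_j, \ldots, A_{j+\ell})$ is OAR with $A_j = \{\ell\}$: collapse the block into the single set $\{\ell\} \cup A_{j+1}$. A \emph{split} at position $j$ applies when $A_j$ has even cardinality: let $\ell$ be the smallest element of $A_j$ (breaking ties by smallest color), set $B := A_j \setminus \{\ell\}$ (automatically odd-sized), and replace $A_j$ by the block $(\{\ell\}, B, B, \ldots, B)$ with $B$ repeated $\ell$ times. The OAR ascending clause holds in the output by minimality of $\ell$. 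Each operation toggles the count of even-sized components by exactly one, and hence flips $v(A)$. Since an even-sized component is never a singleton, the two eligibility conditions never apply at the same index, so I can define $\phi$ unambiguously by scanning $A$ from left to right for the smallest eligible index and applying the unique applicable operation.

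To finish I would check three points. First, every $A$ with $v(A) = -1$ has at least one even-sized component (by the definition of $v$), so split is eligible somewhere and $\phi$ is defined on the entire source set. Second, merge and split at position $j$ are mutual inverses: splitting the merged set $\{\ell\} \cup A_{j+1}$ recovers $\ell$ as its minimum element, because the OAR ascending clause forces every element of $A_{j+1}$ to exceed $\ell$ in the lexicographic (prime, color) order. Third, and most delicately, the ``first eligible index'' is invariant under $\phi$: positions before $j$ are untouched, so no earlier eligibility is newly created; and the operation deposits exactly one new eligible object at index $j$ (an even-sized set after a merge, an OAR-initiating singleton after a split). This last invariance is the main obstacle and relies on the strict locality of both operations together with the ascending clause of OAR that guarantees the merged/split set's minimum is exactly the distinguished $\ell$. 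With these facts, $\phi \circ \phi = \mathrm{id}$, so $\phi$ is an injection from negative-sign sequences into positive-sign ones, and the lemma follows.
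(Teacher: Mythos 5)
Your approach mirrors the paper's own: define a split at the leftmost even-sized component and a merge at the leftmost OAR block, then argue these form a sign-reversing involution. However, the crucial claim---that ``positions before $j$ are untouched, so no earlier eligibility is newly created''---is not sound. Whether an OAR subsequence begins at an index $j' < j$ is not a local property of $A_{j'}$ alone: it depends on the later entries $A_{j'+1}, \ldots, A_{j'+\ell'}$, which may reach position $j$. A split at $j$ can therefore create an OAR block that begins strictly to the left of $j$, moving the first eligible index.

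Concretely, take $d = 1$ and $A = (\{2\}, \{3\}, \{3,5\}) \in \A_{1,17}$, so $v(A) = -1$. The only eligible index is $3$ (the even-sized set $\{3,5\}$); there is no OAR subsequence. Splitting there gives $\phi(A) = (\{2\}, \{3\}, \{3\}, \{5\}, \{5\}, \{5\})$. But now $(\{2\}, \{3\}, \{3\})$ is an OAR block, so the first eligible index of $\phi(A)$ is $1$, not $3$; merging at $1$ yields $(\{2,3\}, \{5\}, \{5\}, \{5\}) \neq A$. Hence $\phi$ is not an involution, and in fact $\phi$ sends both $A$ and $(\{2,3\}, \{5\}, \{5\}, \{5\})$ (which also has sign $-1$ and weight $17$) to the same image $(\{2\}, \{3\}, \{3\}, \{5\}, \{5\}, \{5\})$, so $\phi$ is not even injective on the negative-sign sequences, and the desired inequality does not follow from the argument as written. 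This difficulty is present in the published proof as well, which uses the same leftmost-index rule; a correct version requires a more careful choice of the index at which to act (for example, scanning from the right rather than the left) together with a genuine verification that the chosen index is preserved by the operation.
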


\begin{proof}
Let $A = (A_1, \ldots, A_k) \in \A_{d,n}$. Define indices $i_1(A), i_2(A)$ as follows:
\begin{itemize}
\item
$i_1(A)$ is the smallest index where $|A_i|$ is even; 
\item
$i_2(A)$ is the smallest index where an OAR subsequence begins.
\end{itemize}
Notice that, when both defined, $i_1(A)$ and $i_2(A)$ must be distinct for any sequence $A$, since an OAR subsequence must begin with a singleton set, which has odd size. Next, given sequence $A$, we define the sequence $f(A)$ as follows:
\begin{itemize}
\item
Case 1:  $i_1(A)$ is defined, and $i_2(A) > i_1(A)$ or is undefined. Let $i = i_1(A)$, and let $p_0 \in A_i$ be the smallest prime number that is assigned the minimum colour. Let $A'_{i} = A_i \setminus \set{p_0}$. Note that, since $|A_i|$ is even, $|A'_i|$ is necessarily odd (and hence nonempty). Define $f(A)$ by taking the sequence $A$ and replacing $A_i$ by the (OAR) subsequence
\[
\set{p_0}, \underbrace{A', \ldots, A'}_{\tn{$p_0$~times}},
\]
with the colour assignment to elements unchanged. 

For example, given $A = (\set{2}, \set{3,11}, \set{5,7})$, $i_1(A) =2$ and $i_2(A)$ is undefined, and we have $f(A) = (\set{2}, \set{3}, \set{11}, \set{11}, \set{11}, \set{5,7})$. Notice that $v(A) = 1, v(f(A))=-1$, and both $A$ and $f(A)$ have weight $1+32+34 = 67$.
\item
Case 2: $i_2(A)$ is defined, and $i_1(A) > i_2(A)$ or is undefined. Let $i = i_2(A)$ and let $p_0$ be the unique element in $A_i$. Define $f(A)$ by taking $A$ and replacing the OAR subsequence $A_j, A_{j+1}, \ldots, A_{j+p_0}$ by the set $A_j \cup A_{j+1}$, with the colour assignment to elements unchanged.

For example, given $A = (\set{2}, \set{3}, \set{11}, \set{11}, \set{11}, \set{5,7})$, then $i_1(A) = 6$ and $i_2(A) = 2$, and we have $f(A) = (\set{2}, \set{3,11}, \set{5,7})$.
\end{itemize}
Notice that if $v(A) = -1$, then $i_1(A)$ must be defined, and so $f(A)$ is defined. Since $f$ either replaces one even set by a number of odd sets (Case 1) or vice versa (Case 2), we see that $v(f(A)) = -v(A)$ whenever $f(A)$ is defined. It is also not hard to check that $w(f(A)) = w(A)$ in both cases.

Furthermore, notice that if $f(A)$ is defined, then $f(f(A)) = A$. This shows that $f$ is a bijection between the sets $\set{A \in \A_{d,n} : v(A) = -1}$ and $\set{f(A) : A \in \A_{d,n}, v(A) = -1}$. Thus, $f$ is injective when we consider it as a mapping from $\set{A \in \A_{d,n} : v(A) = -1}$ to $|\set{A \in \A_{d,n} : v(A) = 1}$, and our claim follows.
\end{proof}

Thus, the number of elements $A \in \A_{d,n}$ with $v(A)=-1$ never outnumber those with $v(A) = 1$. In fact, the function $f$ defined in the proof above can be seem as ``pairing up'' the sequences in $\A_{d,n}$ that contains an even set or an OAR subsequence (or both). Thus, we have obtained the following.

\begin{corollary}\label{Cor:0301}
Define $\tilde{\A}_{d,n} \subseteq \A_{d,n}$ to be the set of sequences that neither contains an even set nor an OAR subsequence. Then $a_d(n) = |\tilde{\A}_{d,n}|$ for every $d \geq 1$ and $n \geq 0$. 
\end{corollary}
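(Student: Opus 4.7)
The plan is to upgrade the one-way injection $f$ from the proof of Lemma~\ref{Lem:0302} into a sign-reversing involution on the complement $\A_{d,n} \setminus \tilde{\A}_{d,n}$, and then combine this with Lemma~\ref{Lem:0301}. Since that lemma already gives $a_d(n) = \sum_{A \in \A_{d,n}} v(A)$, the corollary reduces to showing that the signed sum collapses to the cardinality $|\tilde{\A}_{d,n}|$.

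First I would observe that $f(A)$ is defined precisely when at least one of the indices $i_1(A)$ or $i_2(A)$ exists, i.e., when $A$ contains either an even-sized set or an OAR subsequence; equivalently, $f$ is undefined exactly on $\tilde{\A}_{d,n}$. The proof of Lemma~\ref{Lem:0302} already records that $f(f(A)) = A$ whenever $f(A)$ is defined, so $f$ is an involution on $\A_{d,n} \setminus \tilde{\A}_{d,n}$, and the sign-reversing property $v(f(A)) = -v(A)$ is noted there as well (since $f$ trades a single even-sized set for an odd-length run of odd-sized sets, or vice versa, changing the parity of the number of even-sized sets by one). Consequently, the contributions of $A$ and $f(A)$ cancel pairwise in the sum over $\A_{d,n} \setminus \tilde{\A}_{d,n}$, leaving
\[
a_d(n) = \sum_{A \in \tilde{\A}_{d,n}} v(A).
\]

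To finish, note that every $A = (A_1, \ldots, A_k) \in \tilde{\A}_{d,n}$ has each $A_i$ of odd size (by the absence of any even-sized set), so $v(A_i) = (-1)^{|A_i|+1} = 1$ for all $i$ and hence $v(A) = \prod_{i=1}^k v(A_i) = 1$. Therefore $\sum_{A \in \tilde{\A}_{d,n}} v(A) = |\tilde{\A}_{d,n}|$, which completes the argument. The only delicate point, essentially settled already inside the proof of Lemma~\ref{Lem:0302}, is checking that Case~1 of $f$ always outputs a sequence falling under Case~2 and vice versa, so that $f$ genuinely acts as an involution rather than just a one-way injection; with that in hand, the remaining bookkeeping is routine.
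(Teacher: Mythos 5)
Your proposal reproduces the paper's argument: the paper itself justifies the corollary only by the remark that ``the function $f$ defined in the proof above can be seen as pairing up the sequences in $\A_{d,n}$ that contain an even set or an OAR subsequence,'' which is precisely your sign-reversing-involution reformulation combined with the observation that every element of $\tilde{\A}_{d,n}$ has sign $+1$. So the approach is the same. You also correctly single out the one load-bearing claim: that $f(f(A)) = A$, i.e., that Case~1 and Case~2 of $f$ genuinely invert one another. But that claim is \emph{not} ``essentially settled'' in the proof of Lemma~\ref{Lem:0302}; the paper merely asserts it, and as stated it is false.

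Here is a concrete counterexample (with $d=1$, so colours can be suppressed). Take
\[
A = \bigl(\{3\},\ \{5\},\ \{5\},\ \{5,7\}\bigr) \in \A_{1,44}.
\]
Then $i_1(A)=4$ (the only even-sized set is $A_4$) and $i_2(A)$ is undefined (there is no OAR subsequence in $A$: the would-be OAR at $j=1$ fails because $A_4 \neq A_2$ and $|A_4|$ is even). Case~1 at $i=4$ with $p_0 = 5$, $A' = \{7\}$ yields
\[
f(A) = \bigl(\{3\},\ \{5\},\ \{5\},\ \{5\},\ \{7\},\ \{7\},\ \{7\},\ \{7\},\ \{7\}\bigr).
\]
But now $f(A)$ contains an OAR subsequence \emph{beginning at position $1$}: $(\{3\},\{5\},\{5\},\{5\})$ satisfies Odd, Ascending ($5>3$), and Repetitive. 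Case~1 has created a new OAR strictly to the left of where it acted. Consequently $i_2(f(A)) = 1$ rather than $4$, and Case~2 applied to $f(A)$ merges positions $1$ through $4$ into $\{3,5\}$, giving
\[
f(f(A)) = \bigl(\{3,5\},\ \{7\},\ \{7\},\ \{7\},\ \{7\},\ \{7\}\bigr) \neq A.
\]
In fact, setting $A'' = f(f(A))$, one also checks $f(A'') = f(A)$ with $v(A) = v(A'') = -1$, so $f$ is not even injective on $\{A : v(A) = -1\}$ as claimed in the proof of Lemma~\ref{Lem:0302}. The pair $\{A'',f(A)\}$ is a genuine $2$-cycle of $f$, while $A$ is an unmatched ``tail'' feeding into it, so the pairwise cancellation you invoke does not go through.

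The upshot is that the delicate point you flagged is exactly where the argument breaks: to make the involution work one would need to rule out the scenario above (for instance by modifying the definitions so that Case~1 cannot create an OAR to the left of the modified position, or by choosing the canonical even set/OAR in a different order), and no such repair is present in either the paper's proof or your proposal. Until that is fixed, the step ``$f$ is a sign-reversing involution on $\A_{d,n}\setminus\tilde{\A}_{d,n}$'' is unjustified, and the identity $a_d(n) = |\tilde{\A}_{d,n}|$ does not follow from the material at hand.
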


Corollary~\ref{Cor:0301} gives us a set whose elements is counted by $a_d(n)$. Using this combinatorial description, we prove the following result.

\begin{proposition}\label{Prop:0301}
For every $d \geq 1$, $a_d(0) =1$ and
\[
\frac{a_d(n+1)}{a_d(n)} \geq d
\]
for all $n \geq 0$.
\end{proposition}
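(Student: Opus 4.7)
The first part, $a_d(0) = 1$, is immediate from the definition $a_d(n) = [z^n]\, z/M_d(z)$ together with $M_d(z) = z + O(z^2)$, so that $z/M_d(z) = 1 + O(z)$.

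For the ratio bound, my plan is to use the combinatorial description $a_d(n) = |\tilde{\A}_{d,n}|$ furnished by Corollary~\ref{Cor:0301} and construct an injection $\Phi\colon [d] \times \tilde{\A}_{d,n} \to \tilde{\A}_{d,n+1}$, which would immediately yield $a_d(n+1) \geq d \cdot a_d(n)$. The natural candidate sends $(c, A)$ with $A = (A_1,\ldots,A_k)$ to the sequence $(\{2_c\}, A_1, \ldots, A_k)$, prepending the weight-$1$ singleton $\{2_c\}$: this increases total weight by exactly $1$ and never introduces an even-sized set. A case analysis shows that the only way it can fail to land in $\tilde{\A}_{d,n+1}$ is that an OAR subsequence begins at the new position $1$, which happens precisely when $A_1 = A_2$ are odd with all elements lexicographically greater than $2_c$. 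In this ``bad'' case, I would insert $\{2_c\}$ at a canonical later position: let $m$ be the length of the maximal initial run $A_1 = \cdots = A_m$ of equal odd sets with elements exceeding $2_c$, and insert $\{2_c\}$ just after position $m$, iterating past any subsequent bad block if necessary. A careful analysis of the three possible OAR failure modes---(F1) the inserted $\{2_c\}$ starts a new OAR, (F2)/(F3) it completes an OAR initiated by an adjacent singleton $\{2_{c_0}\}$ with $c_0 < c$---should show that at least one safe position always exists, essentially because the original $A$ is OAR-free.

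The main obstacle is verifying injectivity of $\Phi$. For $d = 1$ this is clean: prepending outputs begin with $\{2\}$ while bad-case outputs begin with $A_1 \neq \{2\}$ (its elements all exceed $2$), so the images are disjoint, and appending never creates an OAR for a one-coloured alphabet. For $d \geq 2$, however, a bad-case output can begin with $\{2_{c'}\}$ for some $c' > c$ and potentially collide with the good-case output of colour $c'$ applied to a different sequence. The resolution I propose is to construct an explicit inverse for $\Phi$: given $B \in \tilde{\A}_{d,n+1}$, one locates the ``freshly inserted'' $\{2_c\}$ as the distinguished singleton whose removal yields a sequence $A \in \tilde{\A}_{d,n}$ for which the canonical forward rule reconstructs $B$. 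The hardest step will be to verify, via a delicate case analysis on runs of $2$-singletons with mixed colours in $B$, that this distinguished singleton exists and is uniquely determined, so that $\Phi$ has a well-defined two-sided inverse on its image.
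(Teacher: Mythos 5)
Your framework is the same as the paper's: both use the description $a_d(n) = |\tilde{\A}_{d,n}|$ from Corollary~\ref{Cor:0301} and build an injection $\tilde{\A}_{d,n} \times [d] \hookrightarrow \tilde{\A}_{d,n+1}$. But the concrete map you propose is not injective, and the worry you flag at the end is not a technicality that a careful case analysis would resolve --- it is where the argument actually fails. Take $d=2$, $A = (\set{2_2},\set{2_2},\set{5_1})$ and $A' = (\set{2_2},\set{2_1},\set{5_1})$, both OAR-free elements of $\tilde{\A}_{2,6}$. Prepending $\set{2_1}$ to $A$ creates the OAR $(\set{2_1},\set{2_2},\set{2_2})$, so your rule shifts the insertion past the initial run $A_1=A_2$ and outputs $(\set{2_2},\set{2_2},\set{2_1},\set{5_1})$. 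Prepending $\set{2_2}$ to $A'$ is safe (since $A'_1\neq A'_2$) and outputs the \emph{same} sequence. So $\Phi(A,1)=\Phi(A',2)$, and in the image $B$ both $B_1=\set{2_2}$ and $B_3=\set{2_1}$ qualify as the ``distinguished singleton'' whose removal and forward-rule reinsertion reconstructs $B$. The inverse you propose therefore cannot be well-defined; this is a genuine gap, not merely a hard verification.

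The paper avoids the problem by \emph{appending} $\set{2_c}$ at the end of $A$ rather than prepending. Appending can create an OAR in exactly one way: the last three sets become $\set{2_{c'}},\set{2_c},\set{2_c}$ with $c'<c$, i.e.\ $A$ itself ends $\ldots,\set{2_{c'}},\set{2_c}$. In that case the paper replaces the two trailing copies of $\set{2_c}$ by the single set $\set{3_c}$, which still adds weight $1$. The crucial feature of this fix is that it changes the visible ``type'' of the output: a bad-case image ends in $\set{3_c}$ while a good-case image ends in $\set{2_c}$, so the two families of images are disjoint and $(A,c)$ is read off immediately from the final set. Your prepend-and-shift fix leaves no such discriminant, which is precisely why injectivity breaks. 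To repair your argument along the lines you sketched, you would need to replace the positional shift with an operation that leaves an unambiguous footprint in the output, such as the paper's $\set{2_c},\set{2_c}\mapsto\set{3_c}$ merge at the tail.
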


\begin{proof}
Given $A = (A_1, \ldots, A_k) \in \tilde{\A}_{d,n}$, we define $f : \tilde{\A}_{d,n} \times [d] \to \tilde{\A}_{d, n+1}$ as follows:
\[
f(A, c) = \begin{cases}
(A_1, \ldots, A_{k-1}, \set{3_c}) & \tn{ if $A_{k} = \set{2_c}$ and $A_{k-1} = \set{2_{c'}}$ where $c' < c$;}\\
(A_1, \ldots, A_{k-1}, A_k, \set{2_c}) & \tn{otherwise.}
\end{cases}
\] 
(Again, we have used subscripts to denote assigned colours of elements.) The function $f$ can be seen as appending the set $\set{2_c}$ at the end of the given sequence (which increases the total weight by $1$ and does not create an even set). If this does create an OAR subsequence, it must be that the last three sets of the new sequence are $\set{2_{c'}}, \set{2_c}, \set{2_c}$ where $c' < c$. In this case, we further replace the two instances of $\set{2_c}$ by one instance of $\set{3_c}$, which does not change the weight of the whole sequence, and now guarantees that it does not have an OAR subsequence.

Since $f(A,c)$ must have weight $n+1$ and does not contain any even sets nor OAR subsequences, it is an element of $\tilde{\A}_{d,n+1}$. It is also easy to see that $A$ and $c$ are uniquely recoverable from $f(A,c)$, and so $f(A,c)$ is injective. Thus, we conclude that 
\[
a_d(n+1) = |\tilde{\A}_{d,n+1}| \geq | \tilde{\A}_{d,n} \times [d]| = d a_d(n). \mbox{\qedhere}
\]
\end{proof}

\subsection{An upper bound for $a_d(n+1)/a_d(n)$}

\begin{figure}[h!]
\begin{center}
\begin{tikzpicture}[scale = 0.65, xscale = 1, yscale = 1, font = \footnotesize, word node/.style={font=\footnotesize}]]
\def\xlb{0};
\def\xub{20};
\def\ylb{0};
\def\yub{7};
\def\xbuf{0.5};
\def\ybuf{0.5};
\draw [->] (0,0) -- (\xub + \xbuf,0);
\draw [->] (0,0) -- (0, \yub + \ybuf);
\foreach \x in {\xlb ,5, ...,\xub}
{
\ifthenelse{\NOT 0 = \x}{\draw[thick](\x ,-2pt) -- (\x ,2pt);}{}
\ifthenelse{\NOT 0 = \x}{\node[anchor=north] at (\x,0) (label) {{$\x$}};}{}
}
\foreach \y in {1,  ..., \yub}
{
\draw[thick](-2pt, \y ) -- (2pt, \y);
\draw[dotted](0, \y ) -- (\xub + \xbuf, \y);
\node[anchor=east] at (0,\y) (label) {{ $\y$}};
}
\node[anchor=north east] at (0,0) (label3) { $0$};
\node[anchor=west] at (\xub + \xbuf,0) (label3) {$n$};
\node[anchor=south] at (0, \yub+\ybuf) (label3) {$\frac{a_d(n+1)}{a_d(n)}$};
%d=1
\draw[thick](0,1)--(1,2)--(2,1.5)--(3,2)--(4,1.5)--(5,1.888888888888889)--(6,1.647058823529412)--(7,1.785714285714286)--(8,1.66)--(9,1.771084337349398)--(10,1.693877551020408)--(11,1.746987951807229)--(12,1.705747126436782)--(13,1.735849056603774)--(14,1.713509316770186)--(15,1.730403262347078)--(16,1.717989002356638)--(17,1.727328151196464)--(18,1.720374128650843)--(19,1.725393650305175)--(20,1.721611177170036);
\def\y{0.5};
\foreach \position in {(0,1),(1,2),(2,1.5),(3,2),(4,1.5),(5,1.888888888888889),(6,1.647058823529412),(7,1.785714285714286),(8,1.66),(9,1.771084337349398),(10,1.693877551020408),(11,1.746987951807229),(12,1.705747126436782),(13,1.735849056603774),(14,1.713509316770186),(15,1.730403262347078),(16,1.717989002356638),(17,1.727328151196464),(18,1.720374128650843),(19,1.725393650305175),(20,1.721611177170036)}
{\node[draw, circle, inner sep=0pt, minimum size = 0.1cm, fill] at \position {};}
\node[anchor = west] at (20,  1.721611177170036) {$d=1$};
%d=2
\draw[thick](0,2)--(1,3)--(2,2.5)--(3,2.8)--(4,2.571428571428572)--(5,2.694444444444445)--(6,2.632302405498282)--(7,2.664490861618799)--(8,2.643312101910828)--(9,2.655792400370713)--(10,2.649078726968174)--(11,2.652782168827063)--(12,2.650597384024074)--(13,2.651855099180924)--(14,2.651141728000136)--(15,2.651541548994392)--(16,2.651313455509663)--(17,2.651444812734057)--(18,2.65137003380384)--(19,2.651412286573063)--(20,2.651388201844219);
\def\y{0.5};
\foreach \position in {(0,2),(1,3),(2,2.5),(3,2.8),(4,2.571428571428572),(5,2.694444444444445),(6,2.632302405498282),(7,2.664490861618799),(8,2.643312101910828),(9,2.655792400370713),(10,2.649078726968174),(11,2.652782168827063),(12,2.650597384024074),(13,2.651855099180924),(14,2.651141728000136),(15,2.651541548994392),(16,2.651313455509663),(17,2.651444812734057),(18,2.65137003380384),(19,2.651412286573063),(20,2.651388201844219)}
{\node[draw, circle, inner sep=0pt, minimum size = 0.1cm, fill] at \position {};}
\node[anchor = west] at (20,2.651388201844219) {$d=2$};
%d=3
\draw[thick](0,3.000000)--(1,4.000000)--(2,3.500000)--(3,3.714286)--(4,3.576923)--(5,3.63
4409)--(6,3.608481)--(7,3.619841)--(8,3.613817)--(9,3.616609)--(10,3.615323)-
-(11,3.615914)--(12,3.615628)--(13,3.615763)--(14,3.615699)--(15,3.615729)--(
16,3.615715)--(17,3.615721)--(18,3.615718)--(19,3.615720)--(20,3.615719);
\def\y{0.5};
\foreach \position in {(0,3.000000),(1,4.000000),(2,3.500000),(3,3.714286),(4,3.576923),(5,3.634409)
,(6,3.608481),(7,3.619841),(8,3.613817),(9,3.616609),(10,3.615323),(11,3.6159
14),(12,3.615628),(13,3.615763),(14,3.615699),(15,3.615729),(16,3.615715),(17
,3.615721),(18,3.615718),(19,3.615720),(20,3.615719)}
{\node[draw, circle, inner sep=0pt, minimum size = 0.1cm, fill] at \position {};}
\node[anchor = west] at (20,3.615719) {$d=3$};
%d=4
\draw[thick](0,4.000000)--(1,5.000000)--(2,4.500000)--(3,4.666667)--(4,4.571429)--(5,4.60
4167)--(6,4.590498)--(7,4.595614)--(8,4.593276)--(9,4.594186)--(10,4.593813)-
-(11,4.593961)--(12,4.593899)--(13,4.593924)--(14,4.593914)--(15,4.593918)--(
16,4.593916)--(17,4.593917)--(18,4.593916)--(19,4.593916)--(20,4.593916);
\def\y{0.5};
\foreach \position in {(0,4.000000),(1,5.000000),(2,4.500000),(3,4.666667),(4,4.571429),(5,4.604167)
,(6,4.590498),(7,4.595614),(8,4.593276),(9,4.594186),(10,4.593813),(11,4.5939
61),(12,4.593899),(13,4.593924),(14,4.593914),(15,4.593918),(16,4.593916),(17
,4.593917),(18,4.593916),(19,4.593916),(20,4.593916)}
{\node[draw, circle, inner sep=0pt, minimum size = 0.1cm, fill] at \position {};}
\node[anchor = west] at (20,4.593916) {$d=4$};
%d=5
\draw[thick](0,5.000000)--(1,6.000000)--(2,5.500000)--(3,5.636364)--(4,5.564516)--(5,5.58
5507)--(6,5.577236)--(7,5.579927)--(8,5.578824)--(9,5.579197)--(10,5.579058)-
-(11,5.579106)--(12,5.579088)--(13,5.579095)--(14,5.579092)--(15,5.579093)--(
16,5.579093)--(17,5.579093)--(18,5.579093)--(19,5.579093)--(20,5.579093);
\def\y{0.5};
\foreach \position in {(0,5.000000),(1,6.000000),(2,5.500000),(3,5.636364),(4,5.564516),(5,5.585507)
,(6,5.577236),(7,5.579927),(8,5.578824),(9,5.579197),(10,5.579058),(11,5.5791
06),(12,5.579088),(13,5.579095),(14,5.579092),(15,5.579093),(16,5.579093),(17
,5.579093),(18,5.579093),(19,5.579093),(20,5.579093)}
{\node[draw, circle, inner sep=0pt, minimum size = 0.1cm, fill] at \position {};}
\node[anchor = west] at (20,5.579093) {$d=5$};
%d=6
\draw[thick](0,6.000000)--(1,7.000000)--(2,6.500000)--(3,6.615385)--(4,6.558140)--(5,6.57
2695)--(6,6.567215)--(7,6.568790)--(8,6.568196)--(9,6.568375)--(10,6.568313)-
-(11,6.568332)--(12,6.568325)--(13,6.568327)--(14,6.568327)--(15,6.568327)--(
16,6.568327)--(17,6.568327)--(18,6.568327)--(19,6.568327)--(20,6.568327);
\def\y{0.5};
\foreach \position in {(0,6.000000),(1,7.000000),(2,6.500000),(3,6.615385),(4,6.558140),(5,6.572695)
,(6,6.567215),(7,6.568790),(8,6.568196),(9,6.568375),(10,6.568313),(11,6.5683
32),(12,6.568325),(13,6.568327),(14,6.568327),(15,6.568327),(16,6.568327),(17
,6.568327),(18,6.568327),(19,6.568327),(20,6.568327)}
{\node[draw, circle, inner sep=0pt, minimum size = 0.1cm, fill] at \position {};}
\node[anchor = west] at (20,6.568327) {$d=6$};
\end{tikzpicture}
\end{center}
\caption{Plotting $\frac{a_d(n+1)}{a_d(n)}$ for $1 \leq d \leq 6, 0 \leq n \leq 20$.}
\label{Fig:0301}
\end{figure}
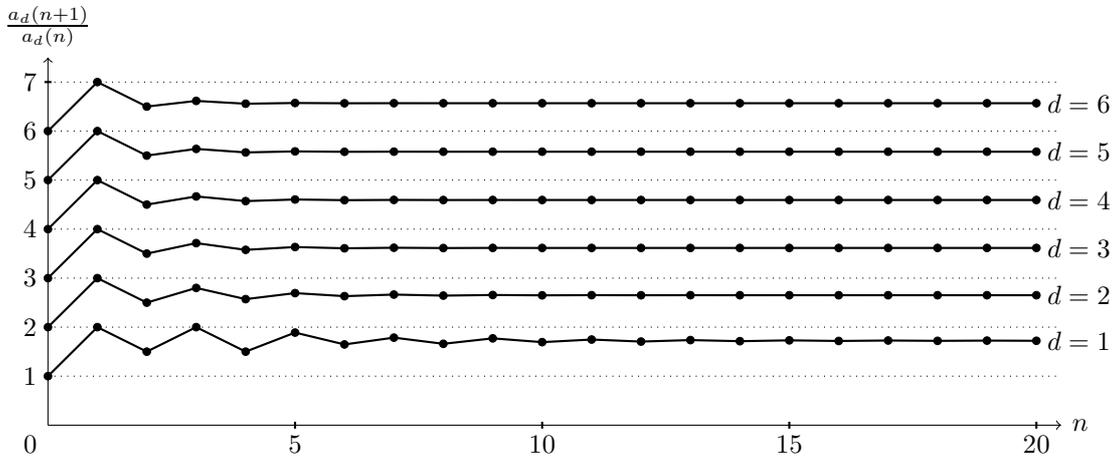

We illustrate in Figure~\ref{Fig:0301} the values of $\frac{a_d(n+1)}{a_d(n)}$ for some small values of $d$ and $n$. While we have shown that the ratio is bounded below by $d$, the figure suggests that $d+1$ is a tight upper bound. We provide an algebraic proof that this is indeed true for all $d \geq 3$.

\begin{proposition}\label{Prop:0302}
For every $d \geq 3$ and $n \geq 0$, 
\[
\frac{a_d(n+1)}{a_d(n)} \leq d+1.
\]
\end{proposition}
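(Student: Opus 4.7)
The plan is to prove by strong induction on $n$ that $u_n := (d+1) a_d(n) - a_d(n+1) \geq 0$. From the recurrence~\eqref{Eq:0302} together with $\mu_d(2) = -d$, one has
\[
u_n = a_d(n) + \sum_{k=3}^{n+2} \mu_d(k)\, a_d(n+2-k),
\]
so the task is to control a mixed-sign sum. The base cases are immediate: $u_0 = 1$ and $u_1 = d(d+1) - (d^2+d) = 0$.

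For the inductive step at $n \geq 2$, the hypothesis $u_m \geq 0$ for $0 \leq m < n$ chains to give the lower bound $a_d(n-j) \geq a_d(n)/(d+1)^j$ for every $0 \leq j \leq n$, while iterating Proposition~\ref{Prop:0301} provides the complementary upper bound $a_d(n-j) \leq a_d(n)/d^j$. I split the sum by the sign of $\mu_d(k)$, applying the lower bound on $a_d(n+2-k)$ to the terms with $\mu_d(k) > 0$ and the upper bound to those with $\mu_d(k) < 0$ (each being the direction that weakens our target inequality). The result is $u_n \geq a_d(n) \cdot B(n)$, where
\[
B(n) = 1 + \sum_{\substack{3 \leq k \leq n+2 \\ \mu_d(k) > 0}} \frac{\mu_d(k)}{(d+1)^{k-2}} - \sum_{\substack{3 \leq k \leq n+2 \\ \mu_d(k) < 0}} \frac{|\mu_d(k)|}{d^{k-2}}.
\]
So it suffices to show $B(n) \geq 0$ for every $n \geq 2$.

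The hard part is this numerical verification, and it is precisely where the hypothesis $d \geq 3$ enters. As $n$ grows by one, $B(n)$ strictly increases (when $\mu_d(n+3) > 0$), stays fixed (when $\mu_d(n+3) = 0$), or drops by $|\mu_d(n+3)|/d^{n+1}$ (when $\mu_d(n+3) < 0$). The first nontrivial contributions come from $\mu_d(3) = -d$ and $\mu_d(4) = \binom{d}{2}$, yielding
\[
B(3) = \frac{\binom{d}{2}}{(d+1)^2} - \frac{1}{d^2},
\]
which is nonnegative precisely when $d^3(d-1) \geq 2(d+1)^2$, an elementary inequality that holds for $d \geq 3$ but fails for $d = 2$. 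This is what forces the stated hypothesis.

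It remains to show that the subsequent downward drops from indices $k \geq 7$ onward cannot exhaust the cushion $B(3)$. Using $|\mu_d(k)| = \prod_i \binom{d}{m_i}$ together with $\binom{d}{m} \leq d^m/m!$ (and $\binom{d}{m} = 0$ for $m > d$), one bounds the tail $\sum_{k \geq 7,\, \mu_d(k) < 0} |\mu_d(k)|/d^{k-2}$ by separating the contributions according to $\Omega(k)$: the primes $p \geq 7$ contribute $\sum_{p \geq 7} d \cdot d^{-(p-2)}$, which is a geometric-type series bounded by a small multiple of $1/d^4$, while composites with $\Omega(k) \geq 3$ contribute at most $O(1/d^3)$, whose precise form exploits the prime-power structure of $\mu_d$. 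The resulting bound on the tail is strictly smaller than $B(3)$ for every $d \geq 3$, with the tightest case $d = 3$ requiring only a short explicit numerical check (where $B(3) = 11/144$ comfortably exceeds the tail of roughly $0.014$). Combining everything yields $B(n) \geq 0$ for all $n \geq 2$, completing the induction.
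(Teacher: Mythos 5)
Your approach is correct in substance and is a streamlined variant of the paper's own argument rather than a fundamentally different one. Both proofs are strong inductions that use Proposition~\ref{Prop:0301} for the lower ratio bound $a_d(m+1)/a_d(m) \geq d$ and the inductive hypothesis for the upper bound $\leq d+1$, each applied in whichever direction weakens the target inequality, and both reduce to a numerical condition on $\mu_d$ that fails exactly for $d \leq 2$. What you do differently is normalize: you divide by $a_d(n)$ and work with the sequence-free quantity $B(n)$, whereas the paper anchors the surplus terms against $a_d(n-6)$ and derives $a_d(n) \leq (d+1)a_d(n-1) + P(d)\,a_d(n-6)$ with $P(d) = -d^3\binom{d}{2} + d(d+1)^2 - d^3 + d + 2d^2 \leq 0$. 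Your normalization buys a shorter base case (only $n = 0,1$, since $B(2) = \binom{d}{2}/(d+1)^2 > 0$ automatically, versus the paper's explicit check of $n \leq 5$) and a cleaner statement of the target; the paper's version is more concrete because the final inequality is a single explicit polynomial. The one real gap in your write-up is the tail estimate: the bound on $\sum_{k\geq 7,\ \mu_d(k)<0} |\mu_d(k)|/d^{k-2}$ is only sketched, with unspecified constants hidden in the $O(1/d^3)$ and ``small multiple of $1/d^4$'' phrases, and for $d=3$ (where $B(3) = 11/144 \approx 0.076$) the margin is tight enough that the constants actually matter. You could close this by invoking the paper's Lemma~\ref{Lem:0303}(i) directly — its proof in fact bounds the sum of absolute values $\sum_{k \geq 8}|\mu_d(k)|\,x^k$ by $\frac{d^3 x^8}{(1-x)(1-dx^8)}$ for $x \leq 1/d$, which together with the isolated $k=7$ term gives a tail bound of about $0.068$ at $d=3$, safely below $B(3)$ — rather than re-deriving the estimate from scratch via $\Omega(k)$.
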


To do so, we'll need a lemma that will also be useful in Section~\ref{Sec:04}.

\begin{lemma}\label{Lem:0303}
Let $d \geq 2$ and $k \geq 3$ be integers.
\begin{itemize}
\item[(i)]
For all $x \in [0, \frac{1}{d}]$, 
\[
\left| \sum_{n\geq 2^k} \mu_d(n)x^n  \right| \leq  2d^k x^{2^k}.
\]
\item[(ii)]
For all $x \in [0, \frac{1}{2d}]$, 
\[
\left| \sum_{n\geq 2^k} n \mu_d(n)x^{n-1}  \right| \leq  2^{k+1} d^k x^{2^k-1}.
\]
\end{itemize}
\end{lemma}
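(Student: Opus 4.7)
The plan is to prove both parts via the uniform bound $|\mu_d(n)|\leq d^{\Omega(n)}$, where $\Omega(n)$ is the number of prime factors of $n$ counted with multiplicity. This follows from the formula~\eqref{Eq:0103} together with the elementary inequality $\binom{d}{m}\leq d^m$. Since every prime is at least $2$, we also have $n\geq 2^{\Omega(n)}$ for every $n\geq 1$, which will control the terms where $\Omega(n)$ is large. Both statements then follow by applying the triangle inequality to each series.

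For (i), I would split $\sum_{n\geq 2^k}|\mu_d(n)|x^n$ into three pieces: the isolated term at $n=2^k$, the remaining $n>2^k$ with $\Omega(n)\leq k$, and the tail where $\Omega(n)>k$. The first piece equals $\binom{d}{k}x^{2^k}\leq (d^k/k!)x^{2^k}\leq \tfrac{1}{6}d^kx^{2^k}$ when $k\geq 3$. The second is bounded by $d^k\sum_{n>2^k}x^n=d^k\,x^{2^k+1}/(1-x)\leq d^kx^{2^k}$, using $x/(1-x)\leq 1/(d-1)\leq 1$ for $x\leq 1/d$ and $d\geq 2$. For the third, any $n$ with $\Omega(n)=j>k$ automatically satisfies $n\geq 2^j$, so that part is at most $\sum_{j>k}d^j\,x^{2^j}/(1-x)$; the ratio of consecutive terms of $\sum_{j>k}d^jx^{2^j}$ is $d\,x^{2^j}\leq d^{1-2^{k+1}}$, which is extremely small for $k\geq 3$, so the super-geometric sum is bounded by $\tfrac{1}{32}d^kx^{2^k}$. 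Adding the three pieces gives $(\tfrac{1}{6}+1+\tfrac{1}{32})d^kx^{2^k}<2d^kx^{2^k}$.

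For (ii), I would perform the analogous three-way split on $\sum_{n\geq 2^k}n|\mu_d(n)|x^{n-1}$. The leading term is $2^k\binom{d}{k}x^{2^k-1}\leq \tfrac{1}{6}\cdot 2^kd^kx^{2^k-1}$. For the middle I would compute $\sum_{n>2^k}nx^{n-1}=\tfrac{d}{dx}[x^{2^k+1}/(1-x)]=(2^k+1)x^{2^k}/(1-x)+x^{2^k+1}/(1-x)^2$; the tighter hypothesis $x\leq 1/(2d)$ gives $1/(1-x)\leq 4/3$ and $x/(1-x)\leq 1/(2d-1)\leq 1/3$, so this derivative is at most $\tfrac{4}{9}\cdot 2^kx^{2^k-1}$ for $k\geq 3$, and the middle is $\leq \tfrac{4}{9}\cdot 2^kd^kx^{2^k-1}$. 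The $\Omega(n)>k$ tail is handled analogously and remains super-geometrically negligible due to the sharper restriction on $x$. Summing, the coefficient of $2^kd^kx^{2^k-1}$ stays below $\tfrac{1}{6}+\tfrac{4}{9}+\varepsilon<2$, yielding the claimed $2^{k+1}d^kx^{2^k-1}$.

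The main obstacle is keeping the numerical constants below the factor of $2$ throughout. The bound $|\mu_d(n)|\leq d^{\Omega(n)}$ overestimates $|\mu_d(2^k)|=\binom{d}{k}$ by a factor of $k!$; the hypothesis $k\geq 3$ is precisely what provides the slack $1/k!\leq 1/6$ at the leading term, which is enough to absorb the middle contribution (which is of size nearly $d^kx^{2^k}$ itself) together with the negligible tail.
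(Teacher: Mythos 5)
Your proof is correct, and it uses the same engine as the paper's — the bound $|\mu_d(n)| \le d^{\Omega(n)}$ from equation~\eqref{Eq:0103} together with $n \ge 2^{\Omega(n)}$ — but the bookkeeping is organized differently, and the difference is worth noting. The paper groups $\{n\ge 2^k\}$ into dyadic blocks $[2^\ell,2^{\ell+1})$, bounds $|\mu_d(n)|\le d^\ell$ on the $\ell$-th block, and sums a geometric series to get $\frac{d^kx^{2^k}}{(1-x)(1-dx^{2^k})}$; for part (i) the factor $(1-x)(1-dx^{2^k})$ dips just below $\tfrac12$ at $d=2$, $x=\tfrac12$, so the paper special-cases $d=2$ by invoking $\mu_2(2^k)=0$ and subtracting the vanishing leading term. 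You instead isolate the $n=2^k$ term and use the sharper estimate $|\mu_d(2^k)|=\binom{d}{k}\le d^k/k!\le d^k/6$ for $k\ge 3$, then split the rest by $\Omega(n)\le k$ versus $\Omega(n)>k$, obtaining $(\tfrac16+1+\tfrac1{32})d^kx^{2^k}<2d^kx^{2^k}$ uniformly for all $d\ge 2$. The gain of $1/k!$ on the leading term is exactly what makes $\mu_2(2^k)$ vanish, so you absorb the paper's special case into a single uniform argument. Your part (ii) computation — the derivative identity $\sum_{n>2^k}nx^{n-1}=(2^k+1)x^{2^k}/(1-x)+x^{2^k+1}/(1-x)^2$, the bounds $1/(1-x)\le 4/3$ and $x/(1-x)\le 1/(2d-1)\le 1/3$ under $x\le 1/(2d)$, and the reduction $\tfrac{3\cdot 2^k+4}{9}\le \tfrac{4}{9}\cdot 2^k$ for $k\ge 2$ — is correct and parallels the paper's. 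The only place you should spell out more in a final write-up is the ``super-geometrically negligible'' tail: the key inequality is that the leading term of the $\Omega(n)>k$ piece is $d^{k+1}x^{2^{k+1}}/(1-x)\le (2d)^{1-2^k}\cdot\tfrac{1}{1-x}\cdot d^kx^{2^k}$, which for $d\ge 2$, $k\ge 3$ is comfortably below your claimed $\tfrac{1}{32}$, with the succeeding ratios $dx^{2^j}$ decaying doubly exponentially; one explicit line would make that airtight.
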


\begin{proof}
We first prove $(i)$. Notice that for all $n \in \set{ 2^k, \ldots, 2^{k+1}-1}$, $n$ has at most $k$ prime factors (counting multiplicities), and so $|\mu_d(n) | \leq d^k$. Thus,
\begin{align*}
\left| \sum_{n \geq 2^k} \mu_d(n) x^n \right| 
&= \left| \sum_{\ell \geq k} \sum_{n = 2^{\ell}}^{2^{\ell+1} -1} \mu_d(n) x^n \right| \\
&\leq \sum_{\ell \geq k} \sum_{n = 2^{\ell}}^{2^{\ell+1} -1} d^{\ell} x^n  \\
&\leq \sum_{\ell \geq k} \sum_{n \geq 2^{\ell}} d^{\ell} x^n\\
&=  \sum_{\ell \geq k} \frac{ d^{\ell}x^{2^{\ell}}}{1-x} \\
&\leq  \sum_{\ell \geq k} \frac{ d^{\ell}x^{ 2^{k} (\ell-k+1)}}{1-x} \\
&=  \frac{ d^k x^{2^k} }{(1-x) (1- dx^{2^k}) }.
\end{align*}
For the last inequality, notice that $2^{i-1} \geq i$ for all $i \geq 1$. Substituting $i = \ell-k+1$ and then multiplying both sides by $2^k$ gives $2^{\ell} \geq 2^{k}(\ell-k+1)$. Since $0 \leq x \leq \frac{1}{d} < 1$, it follows that $x^{2^{\ell}} \leq x^{2^{\ell}(\ell-k+1)}$ for all $\ell \geq k$.

Next, when $d \geq 3, k \geq 3$, and $x \leq \frac{1}{3}$, $(1-x)(1-dx^{2^k}) \geq \frac{1}{2}$, and so $\frac{ d^k x^{2^k} }{(1-x) (1- dx^{2^k})} \leq 2d^kx^{2^k}$. For the case $d =2$, notice that $\mu_2(2^k) = 0$ for all $k\geq 3$. Thus, using essentially the same chain of inequalities above, we obtain that
\[
\left| \sum_{n \geq 2^k} \mu_2(n) x^n \right| \leq \frac{ d^k x^{2^k} }{(1-x) (1- dx^{2^k}) } -d^kx^{2^k} \leq 2 d^kx^{2^k}. 
\]
Next, we prove $(ii)$ using similar observations. Given $d\geq 2, k\geq 3$, and $x \leq \frac{1}{2d}$, 
\begin{align*}
\left| \sum_{n \geq 2^k} \mu_d(n) n x^{n-1} \right| 
&= \left| \sum_{\ell \geq k} \sum_{n = 2^{\ell}}^{2^{\ell+1} -1} \mu_d(n) nx^{n-1} \right| \\
&\leq \sum_{\ell \geq k} \sum_{n = 2^{\ell}}^{2^{\ell+1} -1} d^{\ell} nx^{n-1}  \\
&\leq \sum_{\ell \geq k} \sum_{n \geq 2^{\ell}} d^{\ell}  nx^{n-1} \\
&= \sum_{\ell \geq k}  d^{\ell}x^{2^{\ell}-1}  \left( \frac{ 2^{\ell}}{1-x} + \frac{x}{(1-x)^2} \right) \\
&\leq \sum_{\ell \geq k}  d^{\ell} x^{(2^k-1)+(\ell -k) 2^k}  \left( \frac{ 2^{\ell}}{1-x} + \frac{x}{(1-x)^2} \right) \\
&= d^{\ell} x^{2^k-1}  \left( \frac{ 2^k}{(1-2dx^{2^k})(1-x)} + \frac{x}{(1-dx^{2^k})(1-x)^2} \right).\\
& \leq d^{\ell} x^{2^k-1} \left( \frac{3}{2} 2^k + \frac{1}{2} \right)\\
& \leq 2^{k+1} d^{\ell} x^{2^k-1}. \mbox{\qedhere}
\end{align*}
\end{proof}

We are now ready to prove Proposition~\ref{Prop:0302}.

\begin{proof}[Proof of Proposition~\ref{Prop:0302}]
We prove our claim by induction on $n$. First, using~\eqref{Eq:0302}, we obtain that
\begin{align*}
a_d(0) &= 1,\\
a_d(1) &= d,\\
a_d(2) &= d^2+d,\\
a_d(3) &= d^3+\frac{3}{2}d^2+\frac{1}{2}d,\\
a_d(4) &= d^4+2d^3+2d^2+d,\\
a_d(5) &= d^5+\frac{5}{2}d^4 + \frac{7}{2}d^3 + 2d^2,\\
a_d(6) &= d^6+3d^5+\frac{21}{4}d^4+\frac{9}{2}d^3+\frac{9}{4}d^2+d.
\end{align*}
From the above, one can check that $\frac{a_d(n+1)}{a_d(n)} \leq d+1$ for all $0 \leq n \leq 5$. Next, assume $n \geq 6$. Observe that
\begin{align*}
a_d(n) ={}& da_d(n-1) + da_d(n-2) - \binom{d}{2}a_d(n-3) + da_d(n-4) - d^2a_d(n-5)  \\
&+ da_d(n-6) - \sum_{k \geq 8}^{n+1} \mu_d(k) a_d(n+1-k).
\end{align*}
Using the inductive hypothesis as well as Proposition~\ref{Prop:0301}, we see that
\begin{align*}
 da_d(n-2) &\leq a_d(n-1), \\
-\binom{d}{2} a_d(n-3) & \leq -d^3 \binom{d}{2} a_d(n-6),\\
da_d(n-4) & \leq  d(d+1)^2 a_d(n-6),\\
-d^2a_d(n-5) & \leq -d^3a_d(n-6).
\end{align*}
Also, from Proposition~\ref{Prop:0301} again, $a_d(n-6-i) \leq d^{-i}a_d(n-6) $ for every $i\geq 1$, and so
\begin{align*}
\left| \sum_{k \geq 8}^{n+1} \mu_d(k) a_d(n+1-k)\right| & \leq \left| \sum_{k \geq 8}^{n+1} \mu_d(k) d^{7-k} a_d(n-6) \right|  \leq \left| \sum_{k \geq 8} \mu_d(k) d^{7-k} a_d(n-6) \right| \\
& \leq 2d^2 a_d(n-6)
\end{align*}
using Lemma~\ref{Lem:0303}(i). Thus,
\begin{align*}
a_d(n) ={}& da_d(n-1) +  da_d(n-2)   - \binom{d}{2}a_d(n-3)+ da_d(n-4) - d^2a_d(n-5) \\
& + da_d(n-6)  - \sum_{k \geq 8}^{n+1}  \mu_d(k) a_d(n+1-k) \\
\leq{}& (d+1)a_d(n-1) + \left( -d^3\binom{d}{2} + d(d+1)^2 - d^3 + d + 2d^2\right) a_d(n-6)\\
\leq{}& (d+1)a_d(n-1),
\end{align*}
where the last inequality relies on the assumption that $d \geq 3$.
\end{proof}

As seen in Figure~\ref{Fig:0301}, it appears that Proposition~\ref{Prop:0302} is also true for $d=1$ and $d=2$. A combinatorial proof for that would be interesting.

\section{Asymptotic Formula and Growth Rate of $s_d(n)$}\label{Sec:04}

In this section, we consider the asymptotic behavior and growth rate of $s_d(n)$. We will also end the section by describing a mapping from labelled plane rooted trees to hypercube decompositions that will help provide additional context to the growth rate of $s_d(n)$.

\subsection{An asymptotic formula}

The main tool we will rely on is the following result from Flajolet and Sedgewick~\cite[Theorem VI.6, p.~404]{FlajoletS09}:

\begin{theorem}\label{Thm:0401}
Let $y$ be a power series in $x$.
Let $\phi\colon \mathbb{C} \to \mathbb{C}$ be a function with the following properties:
\begin{enumerate}
\item[(i)]
$\phi$ is analytic at $z = 0$ and $\phi(0) > 0$;
\item[(ii)]
$y = x \phi(y)$;
\item[(iii)]
$[z^n] \phi(z) \geq 0$ for all $n \geq 0$, and $[z^n] \phi(z) \neq 0$ for some $n \geq 2$.
\item[(iv)]
There exists a (then necessarily unique) real number $s \in (0,r)$ such that
$\phi(s) = s \phi'(s)$, where $r$ is the radius of convergence of $\phi$.
\end{enumerate}
Then, 
\[
[x^n] y \sim \sqrt{ \frac{\phi(s)}{2\pi  \phi''(s)}} \, n^{-3/2} \left(  \phi'(s) \right)^n.
\]
\end{theorem}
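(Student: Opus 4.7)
The plan is to combine Lagrange inversion with a saddle-point evaluation. Conditions (i) and (ii) guarantee that $\phi$ is analytic at $0$ with $\phi(0) > 0$, so the relation $y = x\phi(y)$ defines $y$ as an analytic function of $x$ in a neighborhood of the origin, and the classical Lagrange inversion formula yields
\[
[x^n]\, y \;=\; \frac{1}{n}[z^{n-1}]\,\phi(z)^n.
\]
The problem therefore reduces to estimating this single coefficient as $n \to \infty$.

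To extract the coefficient I would apply Cauchy's formula on the circle $|z| = s$, where $s$ is the real number furnished by condition (iv):
\[
[z^{n-1}]\phi(z)^n \;=\; \frac{1}{2\pi}\int_{-\pi}^{\pi}\frac{\phi(se^{i\theta})^n}{s^{n-1}\,e^{i(n-1)\theta}}\, d\theta.
\]
The phase $n\log\phi(se^{i\theta}) - (n-1)i\theta$ is stationary at $\theta = 0$ precisely because the saddle-point equation $z\phi'(z)/\phi(z) = 1$ is exactly $\phi(s) = s\phi'(s)$. A Taylor expansion at $\theta = 0$ using that identity produces a Gaussian factor $\exp\!\bigl(-\tfrac{n s^2 \phi''(s)}{2\phi(s)}\theta^2\bigr)$, and a standard Laplace evaluation gives
\[
[z^{n-1}]\phi(z)^n \;\sim\; \frac{\phi(s)^n}{s^n}\sqrt{\frac{\phi(s)}{2\pi n\, \phi''(s)}}.
\]
Combining this with the $1/n$ from Lagrange inversion and using $\phi(s)/s = \phi'(s)$ produces the claimed asymptotic.

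The main obstacle is justifying that the integrand is concentrated near $\theta = 0$, i.e., showing $|\phi(se^{i\theta})| < \phi(s)$ on $\theta \in (0, 2\pi)$. The weak inequality $|\phi(se^{i\theta})| \le \phi(s)$ is immediate from the non-negativity of the coefficients of $\phi$ in (iii); strictness away from $\theta = 0$ invokes the clause of (iii) that $[z^n]\phi \ne 0$ for some $n \ge 2$, which together with $\phi(0) > 0$ rules out $\phi$ being supported on a proper arithmetic progression and hence prevents additional saddle points on the circle. A standard dissection $\int_{|\theta| \le n^{-2/5}} + \int_{n^{-2/5} \le |\theta| \le \pi}$ then bounds the far-from-saddle contribution by a quantity exponentially smaller than the central Gaussian, and the central integral may be extended to all of $\mR$ with negligible error. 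Condition (iii) also forces $\phi''(s) > 0$, so the Gaussian is non-degenerate and the square root in the asymptotic formula is well defined.

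An equivalent route, which is the one taken in the Flajolet--Sedgewick reference, passes through singularity analysis of $y(x)$: the implicit relation $1 - x\phi'(y) = 0$ first fails at $(x, y) = (\rho, s)$ with $\rho = s/\phi(s) = 1/\phi'(s)$; a local expansion there produces a square-root singularity of the form $y \sim s - \sqrt{2\phi(s)/\phi''(s)}\,\sqrt{1 - x/\rho}$, to which the standard transfer theorem applies and reproduces the same $n^{-3/2}\phi'(s)^n$ asymptotic with identical leading constant.
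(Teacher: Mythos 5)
The paper does not supply a proof of this theorem; it is quoted verbatim as Theorem VI.6 from Flajolet and Sedgewick, so there is no internal argument to compare against. What you have written is a serviceable self-contained sketch of the classical tree-like/singular-inversion asymptotic, and both of your routes (Lagrange inversion plus a circle-of-convergence saddle point, and singularity analysis with a square-root singularity at $x = \rho = s/\phi(s)$ followed by transfer) are standard and correct in their main lines. Your computation of the saddle point, of the second derivative $-s^{2}\phi''(s)/\phi(s)$, of the Gaussian integral, and of the reduction $\phi(s)/s = \phi'(s)$ are all accurate, and your singularity-analysis derivation of the local expansion $y \sim s - \sqrt{2\phi(s)/\phi''(s)}\,\sqrt{1-x/\rho}$ is correct.

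There is, however, one genuine gap in your justification that the circle integral concentrates near $\theta = 0$. You assert that the clause ``$[z^{n}]\phi(z)\neq 0$ for some $n\geq 2$,'' together with $\phi(0)>0$, rules out $\phi$ being supported on a proper arithmetic progression. This is false: take $\phi(z) = 1 + z^{2}$, which satisfies every part of condition (iii) yet has all its nonzero coefficients on $2\mathbb{Z}$, so that $|\phi(se^{i\theta})|$ attains the value $\phi(s)$ at both $\theta=0$ and $\theta=\pi$. In that case there are two saddle points on the circle and the stated asymptotic is off by a factor of two (and $[x^n]y=0$ for even $n$). What is actually required is the aperiodicity condition $\gcd\{n\geq 1 : [z^{n}]\phi(z)\neq 0\} = 1$, which is the hypothesis Flajolet and Sedgewick use; the clause in (iii) by itself is not equivalent to it. For the paper's application this is harmless, since there $\phi(z) = z/M_d(z)$ has $[z^{1}]\phi = a_d(1) = d > 0$, which alone forces aperiodicity, but your claimed inference does not follow in general, and a correct proof must invoke aperiodicity explicitly rather than derive it from (iii).
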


Using Theorem~\ref{Thm:0401}, we obtain the following:

\begin{theorem}\label{Thm:0402}
Let $d \geq 1$, and let $s > 0$ be the smallest real number such that $M_d'(s) = 0$. Then,
\[
s_d(n) \sim \frac{1}{ \sqrt{ -2\pi M_d''(s)}} \, n^{-3/2} \, M_d(s)^{\tfrac12-n}.
\]
\end{theorem}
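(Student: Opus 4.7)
The plan is to apply Theorem~\ref{Thm:0401} with the choice $\phi(z) = z/M_d(z) = \sum_{n \geq 0} a_d(n) z^n$, the series introduced in Section~\ref{Sec:03}. Since $M_d(y) = y + O(y^2)$, $\phi$ is analytic at $0$ with $\phi(0) = a_d(0) = 1 > 0$ (condition~(i)), and the equation $x = M_d(y)$ from Theorem~\ref{Thm:0101} rearranges to $y = x\phi(y)$ (condition~(ii)). Condition~(iii) follows from Corollary~\ref{Cor:0301} together with the fact that $a_d(2) = d^2 + d \neq 0$ (computed from~\eqref{Eq:0302}).

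The main step is verifying condition~(iv). A direct calculation starting from $\phi(z) M_d(z) = z$ yields
\[
\phi(z) - z\phi'(z) = \frac{z^2 M_d'(z)}{M_d(z)^2},
\]
so $\phi(s) = s \phi'(s)$ is equivalent to $M_d'(s) = 0$ whenever $s > 0$ and $M_d(s) \neq 0$. Let $\sigma$ denote the smallest positive real zero of $M_d$. Since $|\mu_d(n)| \leq n^{\log_2 d}$, the radius of convergence of $M_d$ is at least $1$, while Proposition~\ref{Prop:0301} bounds the radius of convergence of $\phi$ above by $1/d$; hence for $d \geq 2$ the pole at $\sigma$ is $\phi$'s dominant singularity, and Pringsheim's theorem (applicable because $\phi$ has non-negative coefficients) forces the radius of convergence of $\phi$ to equal $r = \sigma$. (For $d=1$, the analogous statement is the classical fact underlying Goulden et al.~\cite{GouldenGRS18}.) Since $M_d(0) = 0 = M_d(\sigma)$, Rolle's theorem supplies an $s \in (0, \sigma) = (0, r)$ with $M_d'(s) = 0$; taking the smallest such $s$ completes the verification.

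Finally, to identify the constants, I differentiate $\phi(z) M_d(z) = z$ twice and evaluate at $s$. Using $M_d'(s) = 0$, the first derivative gives $\phi'(s) = 1/M_d(s)$, and the second gives $\phi''(s) = -s M_d''(s)/M_d(s)^2$. Since $\phi''(s) > 0$ (as $\phi$'s coefficients are non-negative and $a_d(2) > 0$), we deduce $M_d''(s) < 0$, so the square root in the target formula is real. Substituting into the conclusion of Theorem~\ref{Thm:0401}, the radicand $\phi(s)/(2\pi \phi''(s))$ collapses to $-M_d(s)/(2\pi M_d''(s))$ and $(\phi'(s))^n = M_d(s)^{-n}$; combining $M_d(s)^{1/2} \cdot M_d(s)^{-n} = M_d(s)^{1/2-n}$ yields the claimed asymptotic. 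The main obstacle is the existence argument for $s$ in the second paragraph; everything else reduces to routine algebraic manipulation.
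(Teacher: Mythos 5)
Your proposal is correct and follows essentially the same route as the paper: apply Theorem~\ref{Thm:0401} with $\phi(z) = z/M_d(z)$, verify conditions (i)--(iv), and substitute. The one place where you go beyond the paper is in justifying that the radius of convergence $r$ of $\phi$ coincides with the smallest positive zero $\sigma$ of $M_d$: the paper simply asserts this, whereas you bound $|\mu_d(n)| \leq n^{\log_2 d}$ to get the radius of convergence of $M_d$ to be at least $1$, use Proposition~\ref{Prop:0301} to bound the radius of convergence of $\phi$ above by $1/d < 1$, and invoke Pringsheim's theorem to force $M_d(r)=0$; this is a genuine (and welcome) tightening of the argument for $d \geq 2$. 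Your identity $\phi(z) - z\phi'(z) = z^2 M_d'(z)/M_d(z)^2$ is also a slightly cleaner way to see the equivalence $\phi(s)=s\phi'(s) \Leftrightarrow M_d'(s)=0$ than the one-directional computation in the paper, and your remark that $\phi''(s) > 0$ forces $M_d''(s) < 0$ (so the radicand is positive) is a useful check the paper omits. The only soft spot is that the Pringsheim argument does not apply as stated when $d=1$ (since then the upper bound $1/d$ on $r$ is not strictly less than $1$), but you acknowledge this and correctly defer to Goulden et al.\ for that case, which is consistent with the paper's own reliance on their work.
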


\begin{proof}
We first verify the analytic conditions listed in Theorem~\ref{Thm:0401}. Since $M_d(y) = x$, to satisfy $(ii)$ we have $y=x\phi(y)$ where 
\[
\phi(z) = \frac{z}{M_d(z)} =  \sum_{n \geq 0} a_d(n)z^n,
\]
where the coefficients $a_d(n)$ were defined in~\eqref{Eq:0301} and studied in Section~\ref{Sec:03}. It is easy to see that $\phi(0) = 1$ (for all $d$) and that $\Phi$ is analytic at $z=0$, and so $(i)$ holds. Condition $(iii)$ follows readily from Proposition~\ref{Prop:0301}. For $(iv)$, notice that
\begin{equation}\label{Eq:0401}
\phi'(z) = \frac{M_d(z) - zM_d'(z)}{M_d(z)^2},
\quad
\phi''(z) = \frac{-zM_d(z)M_d''(z) - 2M_d(z)M_d'(z) + 2z(M_d'(z))^2}{M_d(z)^3}.
\end{equation}
Let $r$ be the radius of convergence of $\phi$ at $z = 0$. 
Since $\phi(z) = \frac{z}{M_d(z)}$, $r$ is the smallest positive solution to $M_d(r) = 0$. Given $M_d(0) = M_d(r)=0$ and that $M_d(z)$ is differentiable over $(0,r)$, there must exist $s \in (0,r)$ where $M_d'(s) =0$. Now observe that
\[
M_d'(s) = 0
~\implies~
\frac{s}{M_d(s)} = s \left( \frac{M_d(s) - sM_d'(s)}{M_d(s)^2} \right) 
~\implies~
\phi(s) = s \phi'(s).
\]
Thus, condition $(iv)$ holds. Now that the analytic assumptions on $\phi(z)$ have been verified, 
we may establish the asymptotic formula. 
When $M_d'(s) = 0$, the expressions in~\eqref{Eq:0401} simplifies to
\[
\phi'(s) = \frac{1}{M_d(s)}, 
\quad\quad\quad 
\phi''(s) = \frac{-sM_d''(s)}{M_d(s)^2}.
\]
Therefore,  we have
\begin{align*}
s_d(n) = [x^n] y 
& \sim
 \sqrt{ \frac{\phi(s)}{2 \pi \phi''(s)} } \; n^{-3/2} \, \phi'(s)^n 
\\
&= \sqrt{ 
\frac{ 
s / M_d(s)
}
{
2 \pi \big( -s M_d''(s) / M_d(s)^2 \big)
}
} 
\; n^{-3/2}  \left( \frac{1}{M_d(s)} \right)^n 
\\
&\;=\; 
\frac{1}{ \sqrt{ -2\pi M_d''(s)  }} \, n^{-3/2} \, M_d(s)^{1/2-n}.
\end{align*}
This completes the proof.
\end{proof}

\subsection{Growth rate}

We define the growth rate of $s_d(n)$ to be
\[
\K_d = 
\lim_{n \to \infty} 
\frac{s_d(n+1)}{ s_d(n) }.
\]
Goulden et al.~\cite[Theorem 2]{GouldenGRS18} showed that $\K_1 \approx 5.487452$ in their work on NECS. Here, we show that $d=1$ turns out the only case where $\K_d < 4d + \frac{3}{2}$. 

\begin{proposition}\label{Prop:0401}
For all integers $d \geq 2$,
\[
4d+ \frac{3}{2}  \leq \K_d \leq 4d + \frac{3}{2} + \frac{1}{16d}.
\].
\end{proposition}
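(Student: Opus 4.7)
By Theorem~\ref{Thm:0402}, $\K_d = 1/M_d(s)$ where $s > 0$ is the smallest positive real with $M_d'(s) = 0$. The claimed inequality is therefore equivalent to
\[
\frac{1}{4d + \frac{3}{2} + \frac{1}{16d}} \;\le\; M_d(s) \;\le\; \frac{1}{4d + \frac{3}{2}},
\]
and my strategy is to expand both $s$ and $M_d(s)$ asymptotically in powers of $1/d$, using Lemma~\ref{Lem:0303} to make the tail bounds rigorous.

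First, I would locate $s$. Substituting $s = u/(2d)$ and applying Lemma~\ref{Lem:0303}(ii) with $k = 3$ bounds the contribution from $n \ge 8$ by $16 d^3 z^7 = O(d^{-4})$ on $[0, 1/(2d)]$, so $M_d'(s) = 0$ reduces modulo $O(d^{-4})$ to a polynomial equation in $u$ that can be written in closed form using $\mu_d(n)$ for $n \le 7$. Matching the ansatz $u = 1 + a_1/d + a_2/d^2 + a_3/d^3$ against that equation order by order yields $a_1 = -\tfrac{1}{2}$, $a_2 = \tfrac{1}{8}$, $a_3 = \tfrac{5}{32}$. The intermediate value theorem, applied to $M_d'$ at explicit endpoints just above and below the approximation, then produces a rigorous two-sided bracket $s_- \le s \le s_+$.

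Next, I would evaluate $M_d$ on this bracket. Splitting $M_d(z) = \sum_{n=1}^{7} \mu_d(n) z^n + \tilde R(z)$ with $|\tilde R(z)| \le 2 d^3 z^8 = O(d^{-5})$ via Lemma~\ref{Lem:0303}(i), and plugging in the expansion of $s$, the surviving contributions at order $1/d^3$ come only from $n \in \{1,2,3,4\}$ (terms with $n \ge 5$ contribute only at $O(d^{-4})$). A short calculation then gives
\[
M_d(s) \;=\; \frac{1}{4d} - \frac{3}{32 d^2} + \frac{1}{32 d^3} + O(d^{-4}),
\]
and inverting by geometric-series expansion produces $\K_d = 4d + \tfrac{3}{2} + \tfrac{1}{16 d} + O(d^{-2})$, which is exactly the shape demanded by the proposition.

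The main obstacle is that the window $\tfrac{1}{16d}$ in the upper bound \emph{equals} the leading correction to $\K_d$ beyond $4d + \tfrac{3}{2}$, so the analysis leaves no slack at all. Concretely, the upper bound requires the $O(d^{-2})$ remainder in $\K_d$ to be nonpositive (or at least absorbable by the window), while the lower bound requires it not to drag $\K_d$ below $4d + \tfrac{3}{2}$; both sides force the asymptotic expansion of $M_d(s)$ to be carried one more order with explicit sign and magnitude control on the correction. Moreover, for a few small values of $d$ (starting with $d = 2$) where the remainder may still be on the same scale as $\tfrac{1}{16d}$, the asymptotic argument alone will not suffice and a direct numerical verification of $s$ and $M_d(s)$ will be needed to complete the proof.
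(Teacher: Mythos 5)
Your plan is genuinely different from the paper's, and the comparison is instructive. The paper also starts from $\K_d = 1/M_d(s)$ and truncates $M_d$ using Lemma~\ref{Lem:0303}, but instead of expanding $s$ asymptotically in $1/d$, it brackets $s$ between two explicit rational functions $k_1 = \frac{4d+5}{(4d+5)(2d+1)+1}$ and $k_2 = \frac{d-1}{d}k_1 + \frac{1}{d(2d+1)}$, shows by direct polynomial computation (in Maple) that $(M_d^+)'(k_1)>0$ and $(M_d^+)'(k_2)<0$, exploits concavity of $M_d^+$ on $[0,1/(2d)]$ to conclude that $s$ lies in $(k_1,k_2)$ and that the linearization of $M_d^+$ at $k_1$ overestimates $M_d^+(s^+)$, and finally reduces both bounds on $\K_d$ to sign statements about explicit polynomials $c_3,\dots,c_6$ with known coefficient signs. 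That machinery is uniform in $d\ge 2$; there is no ``for $d$ large enough'' residue to clean up separately, and no need to track the sign of an implicit $O(d^{-2})$ remainder.

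Your asymptotic calculation is correct as far as it goes: $s = \frac{1}{2d}(1 - \frac{1}{2d} + \frac{1}{8d^2} + \cdots)$, $M_d(s) = \frac{1}{4d} - \frac{3}{32d^2} + \frac{1}{32d^3} + O(d^{-4})$, hence $\K_d = 4d + \frac32 + \frac{1}{16d} + O(d^{-2})$, exactly the target. You also correctly identify the crux: since the upper bound's window $\frac{1}{16d}$ coincides with the leading correction, the $O(d^{-2})$ remainder must be shown nonpositive with explicit constants, and the lower bound likewise leaves no slack. As written, though, the plan stops short of a proof in three respects: (1) the ``rigorous two-sided bracket $s_-\le s\le s_+$'' needs to be exhibited concretely, because proving $(M_d)'$ changes sign on a prescribed interval already demands bounds of the same flavor as the paper's $k_1,k_2$ argument; (2) you do not determine the sign of the $d^{-2}$ coefficient in $\K_d$, which is exactly what the upper bound requires; and (3) you acknowledge that small $d$ must be handled numerically but give no effective threshold $D$ beyond which the asymptotics kick in. Filling (1) and (3) rigorously would essentially force you back to explicit polynomial brackets for $s$, at which point the paper's route is the more economical one. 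So: correct heuristic shape, but a different strategy that, to be completed, would converge on much of the same explicit polynomial work the paper outsources to Maple.
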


\begin{proof}
It follows immediately from Theorem \ref{Thm:0402} that $\K_d = \frac{1}{M_d(s)}$
where $s$ is the smallest positive real number where $M_d'(s) =0$. For convenience, we define the polynomials
\[
M_d^-(x) = \sum_{n \geq 1}^7 \mu_d(n)x^n - 2d^3x^8, \quad 
M_d^+(x) = \sum_{n \geq 1}^7 \mu_d(n)x^n + 2d^3x^8.
\]
Then from Lemma~\ref{Lem:0303} we know that $M_d^-(x) \leq M_d(x) \leq M_d^+(x)$ over $[0, \frac{1}{d}]$, and $(M_d^-)'(x) \leq M_d'(x) \leq (M_d^+)'(x)$ over $[0, \frac{1}{2d}]$.

Let $k_1 = \frac{4d+5}{(4d+5)(2d+1) +1}$ and $k_2 = \frac{d-1}{d} k_1 + \frac{1}{d(2d+1)}$. Notice that $0 < k_1 < k_2 < \frac{1}{2d}$. Now observe that $(M_d^+)'(k_1) = \frac{1}{64(4d^2+7d+3)^7}c_1(d)$, where
\begin{align*}
c_1(d) ={}& 229376d^{10}+1966080d^9+7294976d^8+15323136d^7+20124288d^6+17555072d^5\\
&+11102496d^4+5917032d^3+2803847d^2+933639d+139968.
\end{align*}
(All polynomials computations in this proof were performed in Maple.) Likewise, one can check that $(M_d^+)'(k_2) = \frac{1}{64d^5(2d+1)^7(4d^2+7d+3)^7} c_2(d)$ where
\begin{align*}
c_2(d) ={}&
-16777216d^{23}-209715200d^{22}-1265631232d^{21}-4968939520d^{20}\\
& 
-14345764864d^{19}-32330973184d^{18}-58457362432d^{17}-85738966016d^{16}\\
& 
-102477934592d^{15}-100110810240d^{14}-80143714368d^{13}-52681506144d^{12}\\
& 
-28456978128d^{11}-12623832456d^{10}-4590955884d^9-1365827366d^8\\
& 
-331953575d^7-65949629d^6-10735783d^5-1431097d^4-153709d^3-12647d^2\\
& 
-713d-21.
\end{align*}
Since $c_1(d)$ has exclusively positive coefficients, $(M_d^+)'(k_1) > 0$. Similarly, we see that $(M_d^+)'(k_2) < 0$. Thus, $M_d^+(x)$ has a local maximum at $s^+ \in (k_1, k_2)$. Next, observe that
\[
(M_d^+)''(x) = -2d - 6dx + 6d(d-1)x^2 - 20dx^3 + 30d^2x^4 - 42dx^5 + 112d^3x^6,
\]
which is negative over $[0, \frac{1}{2d}]$. Thus, $M_d^+(x)$ is concave down over this interval. and $M_d^+(s^+)$ is the absolute maximum of $M_d^+(x)$ over $[0, \frac{1}{2d}]$. Now consider
\[
L(x) = M_d^+(k_1) + (M_d^+)'(k_1)(x-k_1),
\]
the linearization of $M_d^+(x)$ at $x=  k_1$. We argue that
\[
M_d(s) \leq M_d^+(s) \leq M_d^+(s^+) \leq L(s^+) \leq L(k_2).
\]
For the first inequality, notice that $M_d'(0) > 0$, and $M_d'(k_2) < (M_d^+)'(k_2) < 0$. Thus, $s \in [0, \frac{1}{2d}]$, which implies that $M_d(s) \leq M_d^+(s)$ (since $M_d(x) \leq M_d^+(x)$ over $[0, k_2]$). The second inequality holds because we showed above that $s^+$ maximizes $M_d^+(x)$ over $[0, \frac{1}{2d}]$. The third inequality holds since $M_d^+(x)$ is concave down, and so $L(x) \geq M_d^+(x)$ over $[k_1, k_2]$. Finally, since $(M_d^+)'(k_1) >0$, $L(x)$ is an increasing function, and we obtain that $L(s^+) \leq L(k_2)$. 

Thus, we have $M_d(s) \leq  L(k_2) = M_d^+(k_1) + (k_2-k_1) (M_d^+)'(k_1)$, and so
\[
\K_d = \frac{1}{M_d(s)}
\geq \frac{1}{M_d^+(k_1) + (k_2-k_1) (M_d^+)'(k_1)}
= 4d+\frac{3}{2} + \frac{c_3(d)}{c_4(d)},
\]
where
\begin{align*}
c_3(d) ={}&
524288d^{16}+6029312d^{15}+29491200d^{14}+76218368d^{13}+92368896d^{12}\\
&
-45438976d^{11}-403496448d^{10}-849085696d^9-1125540408d^8-1083739620d^7\\
&
-793966088d^6-450174888d^5-201247133d^4-73160887d^3-21407400d^2\\
&
-4340565d-419904,\\
c_4(d)={}&
8388608d^{17}+118489088d^{16}+783810560d^{15}+3223584768d^{14}+9227272192d^{13}\\
&
+19497541632d^{12}+31470530560d^{11}+39594894336d^{10}+39261180544d^9\\
&
+30802044560d^8+19075944984d^7+9245407152d^6+3450501536d^5\\
&
+966637518d^4+196684154d^3+28701088d^2+3266958d+279936.
\end{align*}
One can check that $c_3(d), c_4(d) \geq 0$ for all $d \geq 2$, and so we obtain that $\K_d  \geq 4d + \frac{3}{2}$.

Next, we prove the upper bound. Since $M_d(s) \geq M_d(k_1) \geq M_d^-(k_1)$, we have
\[
\K_d = \frac{1}{M_d(s)} 
 \leq \frac{1}{M_d^-(k_1)}
 = 4d + \frac{3}{2} + \frac{16}{d} - \frac{16}{d} \cdot \frac{c_5(d)}{c_6(d)},
\]
where
\begin{align*}
c_5(d)={}&
5505024d^{14}+66846720d^{13}+376373248d^{12}+1307049984d^{11}+3139540992d^{10}\\
&
+5534562304d^9+7393403136d^8+7593124096d^7+6001005236d^6\\
& 
+3610232652d^5+1615182134d^4+516014087d^3+109626523d^2+13506249d\\
&
+699840,\\
c_6(d)&=
2097152d^{15}+28573696d^{14}+181665792d^{13}+715063296d^{12}+1949155328d^{11}\\
& 
+3898431488d^{10}+5912027136d^9+6925392128d^8+6322048032d^7\\
& 
+4502239636d^6+2485004860d^5+1046787214d^4+327172991d^3\\
& 
+72131011d^2+10147017d+699840.
\end{align*}
Since $c_5(d), c_6(d)$ have exclusively positive coefficients, we conclude that $\frac{c_5(d)}{c_6(d)} > 0$, and so we conclude that $\K_d  \leq 4d + \frac{3}{2} + \frac{16}{d}$.
\end{proof}

%c1
%(229376*d^10+1966080*d^9+7294976*d^8+15323136*d^7+20124288*d^6+17555072*d^5+11102496*d^4+5917032*d^3+2803847*d^2+933639*d+139968)
%c2
%(-16777216*d^23-209715200*d^22-1265631232*d^21-4968939520*d^20-14345764864*d^19-32330973184*d^18-58457362432*d^17-85738966016*d^16-102477934592*d^15-100110810240*d^14-80143714368*d^13-52681506144*d^12-28456978128*d^11-12623832456*d^10-4590955884*d^9-1365827366*d^8-331953575*d^7-65949629*d^6-10735783*d^5-1431097*d^4-153709*d^3-12647*d^2-713*d-21)/
%c3
%524288*d^16+6029312*d^15+29491200*d^14+76218368*d^13+92368896*d^12-45438976*d^11-403496448*d^10-849085696*d^9-1125540408*d^8-1083739620*d^7-793966088*d^6-450174888*d^5-201247133*d^4-73160887*d^3-21407400*d^2-4340565*d-419904
%c4
%8388608*d^17+118489088*d^16+783810560*d^15+3223584768*d^14+9227272192*d^13+19497541632*d^12+31470530560*d^11+39594894336*d^10+39261180544*d^9+30802044560*d^8+19075944984*d^7+9245407152*d^6+3450501536*d^5+966637518*d^4+196684154*d^3+28701088*d^2+3266958*d+279936
%c5
%(5505024*d^14+66846720*d^13+376373248*d^12+1307049984*d^11+3139540992*d^10+5534562304*d^9+7393403136*d^8+7593124096*d^7+6001005236*d^6+3610232652*d^5+1615182134*d^4+516014087*d^3+109626523*d^2+13506249*d+699840)/%c6
%(2097152*d^15+28573696*d^14+181665792*d^13+715063296*d^12+1949155328*d^11+3898431488*d^10+5912027136*d^9+6925392128*d^8+6322048032*d^7+4502239636*d^6+2485004860*d^5+1046787214*d^4+327172991*d^3+72131011*d^2+10147017*d+699840)	

We plot in Figure~\ref{Fig:0401} the values of $\K_d - 4d - \frac{3}{2}$ for $2 \leq d \leq 30$, as well as the upper bound given in Proposition~\ref{Prop:0401} for comparison.

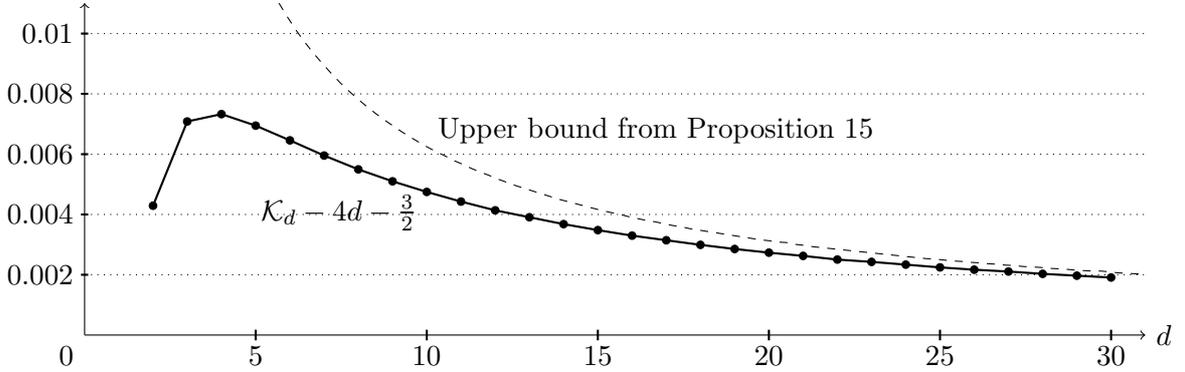
\begin{figure}[htb]
\begin{center}
\begin{tikzpicture}[scale=1, xscale = 0.45, yscale = 400, font = \small, word node/.style={font=\small}]]
\def\xlb{0};
\def\xub{30};
\def\ylb{0};
\def\yub{0.01};
\def\xbuf{1};
\def\ybuf{0.001};
\draw [->] (0,0) -- (\xub + \xbuf,0);
\draw [->] (0,0) -- (0, \yub + \ybuf);
\foreach \x in {\xlb ,5, ...,\xub}
{
\ifthenelse{\NOT 0 = \x}{\draw[thick](\x ,-0.005pt) -- (\x ,0.005pt);}{}
\ifthenelse{\NOT 0 = \x}{\node[anchor=north] at (\x,0) (label) {{$\x$}};}{}
}
\foreach \y in {0.002, 0.004, ..., \yub}
{
\draw[thick](-3pt, \y ) -- (3pt, \y);
\draw[dotted](0, \y ) -- (\xub + \xbuf, \y);
\node[anchor=east] at (0,\y) (label) {{ $\y$}};
}
\node[anchor=north east] at (0,0) (label3) { $0$};
\node[anchor=west] at (\xub + \xbuf,0) (label3) {$d$};
%\node[anchor=south] at (0, \yub+\ybuf) (label3) {$\footnotesize  \K_d - 4d - \frac{3}{2}$};

\draw[thick](2,0.004290)--(3,0.007080)--(4,0.007320)--(5,0.006950)--(6,0.006450)--(7,0.00
5950)--(8,0.005500)--(9,0.005100)--(10,0.004740)--(11,0.004420)--(12,0.004140
)--(13,0.003900)--(14,0.003670)--(15,0.003480)--(16,0.003300)--(17,0.003140)-
-(18,0.002990)--(19,0.002850)--(20,0.002730)--(21,0.002620)--(22,0.002510)--(
23,0.002420)--(24,0.002330)--(25,0.002250)--(26,0.002170)--(27,0.002100)--(28
,0.002030)--(29,0.001970)--(30,0.001910);
\def\y{0.5};
\foreach \position in {(2,0.004290),(3,0.007080),(4,0.007320),(5,0.006950),(6,0.006450),(7,0.005950)
,(8,0.005500),(9,0.005100),(10,0.004740),(11,0.004420),(12,0.004140),(13,0.00
3900),(14,0.003670),(15,0.003480),(16,0.003300),(17,0.003140),(18,0.002990),(
19,0.002850),(20,0.002730),(21,0.002620),(22,0.002510),(23,0.002420),(24,0.00
2330),(25,0.002250),(26,0.002170),(27,0.002100),(28,0.002030),(29,0.001970),(
30,0.001910)}
{\node[draw, circle, inner sep=0pt, minimum size = 0.1cm, fill] at \position {};}
\node[anchor = north east] at (10,0.005) {$\K_d - 4d - \frac{3}{2}$};

\draw[dashed, domain= {1/(16*(\yub+\ybuf))} : \xbuf+\xub, samples = 100] plot (\x, {1 / (16 * \x)} );
\node[anchor = south west] at (10,0.006) {Upper bound from Proposition~\ref{Prop:0401}};

\end{tikzpicture}
\end{center}
\caption{Plotting $\K_d-4d-\frac{3}{2}$ for $2 \leq d \leq 30$ and the upper bound from Proposition~\ref{Prop:0401}.}
\label{Fig:0401}
\end{figure}

\subsection{Relating trees and hypercube decompositions}

We end this section by describing a mapping from plane rooted trees to hypercube decompositions, which will also add some perspective to the bounds we found for $\K_d$ in Proposition~\ref{Prop:0401}.

Given an integer $d \geq 1$, let $\T_{d}$ be the set of plane rooted trees where each internal node has a label from $[d]$ and at least $2$ children, while the leaves of the tree are unlabelled. Furthermore, let $\T_{d,n} \subseteq \T_{d}$ denote the set of trees with exactly $n$ leaves. Then we can define a tree-to-decomposition mapping $\Psi : \T_{d,n} \to \S_{d,n}$ recursively as follows:

\begin{itemize}
\item
$\Psi$ maps the tree with a single leaf node to the trivial decomposition $\set{(0,1)^d}$.
\item
Now suppose $T \in \T_{d,n}$ has root node labelled $i$ with $r$ children. Let $T_1, \ldots, T_r$ be the subtrees of the root node ordered from left to right. Also, for each $j \in [r]$, let $B_j = \set{ x \in (0,1)^d : \frac{j-1}{r} < x_i < \frac{j}{r}}$. (Notice that $\set{B_1, B_2, \ldots, B_r} = H_{i,r}( (0,1)^d))$.) Define
\[
\Psi(T) = \bigcup_{j=1}^r \set{ \scale_{(0,1)^d \to B_j}(R) : R \in \Psi(T_j)}.
\]
\end{itemize}

Figure~\ref{Fig:0402} gives an example of this mapping.

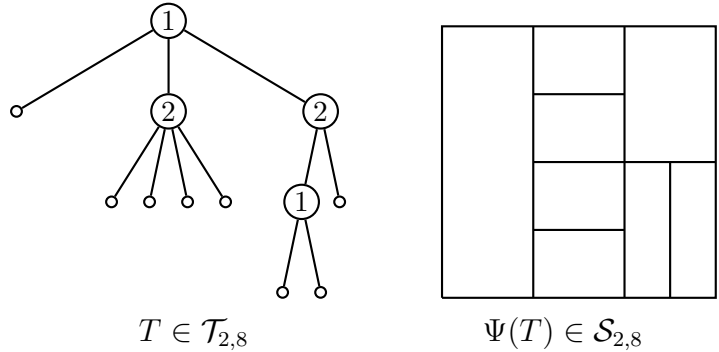
\begin{figure}[h]
\begin{center}
\begin{tabular}{cc}
\begin{tikzpicture}
[xscale=0.5, yscale = 0.6,thick,main node/.style={circle,inner sep=0.5mm,draw,font=\small\sffamily}]
  \node[main node] at (4,6) (0) {$1$};
    \node[main node] at (0,4) (1) {};
    \node[main node] at (4,4) (2) {$2$};
  \node[main node] at (8,4) (3){$2$};
      \node[main node] at (2.5,2) (21) {};
      \node[main node] at (3.5,2) (22) {};
      \node[main node] at (4.5,2) (23) {};
      \node[main node] at (5.5,2) (24) {};
  
      \node[main node] at (7.5,2) (31) {$1$};
            \node[main node] at (8.5,2) (32) {};
      \node[main node] at (7,0) (311) {};
      \node[main node] at (8,0) (312) {};
                     
  \path[every node/.style={font=\sffamily}]
(0) edge (1)
(0) edge (2)
(0) edge (3)
(2) edge (21)
(2) edge (22)
(2) edge (23)
(2) edge (24)
(3) edge (31)
(3) edge (32)
(31) edge (311)
(31) edge (312);
   ;
\end{tikzpicture}
~~
&
~~
\begin{tikzpicture}
[scale=0.3, thick]

\draw(0,0) -- (12,0) -- (12,12) -- (0,12) -- (0,0);
\draw(4,0) -- (4,12);
\draw(8,0) -- (8,12);
\draw(4,6) -- (8,6);
\draw(4,3) -- (8,3);
\draw(4,9) -- (8,9);
\draw(12,6) -- (8,6);
\draw(10,0) -- (10,6);
\end{tikzpicture}\\
$T \in \T_{2,8}$ & $\Psi(T) \in \S_{2,8}$ \\
\end{tabular}
\caption{Illustrating the mapping $\Psi$ from trees to hypercube decompositions.}\label{Fig:0402}
\end{center}
\end{figure}

The mapping $\Psi$ is a natural extension of another tree-to-decomposition mapping that Bagherzadeh,  Bremner, and the author used to study a generalization of Catalan numbers~\cite{AuBB20}. It is not hard to see that the mapping $\Psi$ is onto --- given $S \in \S_{d,n}$, we can use the sequence of splitting operations that resulted in $S$ to generate a tree $T \in \T_{d,n}$ where $\Psi(T) = S$. On the other hand, $\Psi$ is not one-to-one. Figure~\ref{Fig:0403} gives two types of situations where two distinct trees are mapped to the same decomposition.

\begin{figure}[h!]
\begin{center}
\begin{tabular}{ccc}
\begin{tikzpicture}
[scale=0.34,thick,main node/.style={circle,inner sep=0.5mm,draw,font=\small\sffamily}]
  \node[main node] at (4,6) (0) {$1$};
  \node[main node] at (1.5,4) (l1){};
    \node[main node] at (2.5,4) (l2){};
      \node[main node] at (3.5,4) (l3){};
        \node[main node] at (4.5,4) (l4){};
          \node[main node] at (5.5,4) (l5){};
            \node[main node] at (6.5,4) (l6){};
  \path[every node/.style={font=\sffamily}]
    (0) edge (l1)
        (0) edge (l2)
            (0) edge (l3)
                (0) edge (l5)
                    (0) edge (l4)
                        (0) edge (l6)
    ;
\end{tikzpicture}

~~~&~~~
\begin{tikzpicture}
[scale=0.34,thick,main node/.style={circle,inner sep=0.5mm,draw,font=\small\sffamily}]
  \node[main node] at (4,6) (0) {$1$};
    \node[main node] at (2,4) (l) {$1$};
    \node[main node] at (6,4) (r) {$1$};
  \node[main node] at (1,2) (l1){};
    \node[main node] at (2,2) (l2){};
      \node[main node] at (3,2) (l3){};
        \node[main node] at (5,2) (l4){};
          \node[main node] at (6,2) (l5){};
            \node[main node] at (7,2) (l6){};
  \path[every node/.style={font=\sffamily}]
    (0) edge (l)
    (0) edge (r)
                (l) edge (l1)
                 (l) edge (l2)
            (l) edge (l3)
                (r) edge (l5)
                    (r) edge (l4)
                        (r) edge (l6)
    ;
\end{tikzpicture}

~~~&~~~

\begin{tikzpicture}
[scale=0.3,thick]
\def\x{0.5}
\draw (0,0) -- (6,0);
\draw (0,{\x}) -- (0, {-\x});
\draw (1,{\x}) -- (1, {-\x});
\draw (2,{\x}) -- (2, {-\x});
\draw (3,{\x}) -- (3, {-\x});
\draw (4,{\x}) -- (4, {-\x});
\draw (5,{\x}) -- (5, {-\x});
\draw (6,{\x}) -- (6, {-\x});
\end{tikzpicture}

\\
$T_1 \in \T_{1,6}$ & 
$T_2 \in \T_{1,6}$ & 
$\Psi(T_1) = \Psi(T_2) \in \S_{1,6}$  \\
\\
\hline
\\

\begin{tikzpicture}
[scale=0.34,thick,main node/.style={circle,inner sep=0.5mm,draw,font=\small\sffamily}]
  \node[main node] at (4,6) (0) {$1$};
    \node[main node] at (2,4) (l) {$2$};
    \node[main node] at (6,4) (r) {$2$};
  \node[main node] at (1,2) (l1){};
    \node[main node] at (2,2) (l2){};
      \node[main node] at (3,2) (l3){};
        \node[main node] at (5,2) (l4){};
          \node[main node] at (6,2) (l5){};
            \node[main node] at (7,2) (l6){};
  \path[every node/.style={font=\sffamily}]
    (0) edge (l)
    (0) edge (r)
                (l) edge (l1)
                 (l) edge (l2)
            (l) edge (l3)
                (r) edge (l5)
                    (r) edge (l4)
                        (r) edge (l6)
    ;
\end{tikzpicture}

~~~&~~~
\begin{tikzpicture}
[scale=0.34,thick,main node/.style={circle,inner sep=0.5mm,draw,font=\small\sffamily}]
  \node[main node] at (4,6) (0) {$2$};
    \node[main node] at (1.5,4) (l) {$1$};
        \node[main node] at (4,4) (c) {$1$};
    \node[main node] at (6.5,4) (r) {$1$};
  \node[main node] at (1,2) (l1){};
    \node[main node] at (2,2) (l2){};
      \node[main node] at (3.5,2) (l3){};
        \node[main node] at (4.5,2) (l4){};
          \node[main node] at (6,2) (l5){};
            \node[main node] at (7,2) (l6){};
  \path[every node/.style={font=\sffamily}]
    (0) edge (l)
        (0) edge (c)
    (0) edge (r)
                (l) edge (l1)
                 (l) edge (l2)
            (c) edge (l3)
                (c) edge (l4)
                    (r) edge (l5)
                        (r) edge (l6)
    ;
    
\end{tikzpicture}

~~~&~~~
\begin{tikzpicture}
[scale=0.15, thick]
\draw(0,0) -- (12,0) -- (12,12) -- (0,12) -- (0,0);
\draw(0,4) -- (12,4);
\draw(0,8) -- (12,8);
\draw(6,0) -- (6,12);
\end{tikzpicture}
\\
$T_3 \in \T_{2,6}$ & 
$T_4 \in \T_{2,6}$ & 
$\Psi(T_3) = \Psi(T_4) \in \S_{2,6}$  \\
\end{tabular}
\caption{Two types of situations where $\Psi : \T_{d,n} \to \S_{d,n}$ is not one-to-one.}\label{Fig:0403}
\end{center}
\end{figure}
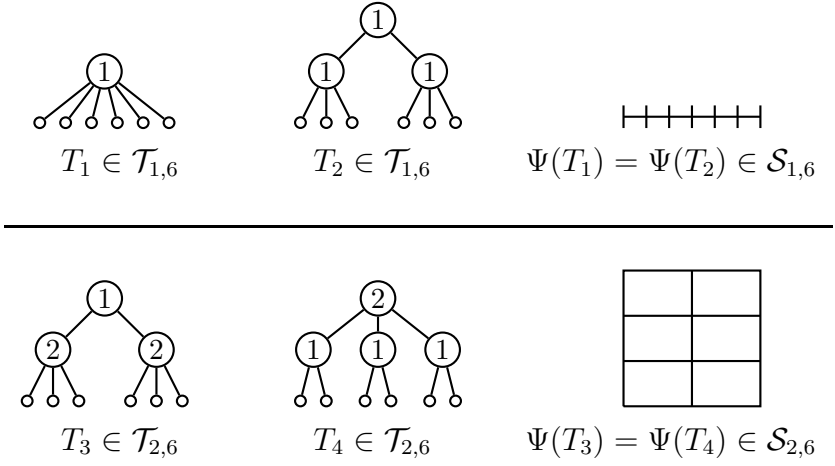

In general, if we let $t_d(n) = |\T_{d,n}|$, then we see that $t_d(n) \geq s_d(n)$ for every $d,n \geq 1$, with the inequality being strict for all $n \geq 4$. We remark that when $d=1$, we can consider the nodes of the trees in $\T_{1,n}$ as being unlabelled, and so $t_1(n)$ gives the well-studied small Schr\"oder numbers~\cite[\href{https://oeis.org/A001003}{A001003}]{OEIS}. Thus, the sequences $t_d(n)$ can be seen as a generalization of small Schr\"oder numbers, which the author recently studied in another manuscript~\cite{Au21}. In particular, we have the following for the growth rate of $t_d(n)$:

\begin{proposition}[\cite{Au21}, Proposition 4]
For every $d \geq 1$,
\[
\lim_{n \to \infty} \frac{t_d(n+1)}{t_d(n)} = 2d+1+2\sqrt{d^2+d}.
\]
\end{proposition}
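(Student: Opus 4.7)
The plan is to set up and solve a functional equation for the generating function $T_d(x) = \sum_{n \geq 1} t_d(n) x^n$, then extract the growth rate from its dominant singularity. Decomposing any tree in $\T_d$ into either a single leaf (contributing $x$) or a root carrying one of $d$ labels with an ordered sequence of at least $2$ subtrees gives
\[
T_d(x) = x + d \sum_{k \geq 2} T_d(x)^k = x + \frac{d\, T_d(x)^2}{1 - T_d(x)}.
\]
Clearing the denominator yields the quadratic
\[
(d+1)\, T_d(x)^2 - (1+x)\, T_d(x) + x = 0,
\]
and the branch with $T_d(0) = 0$ is
\[
T_d(x) = \frac{(1+x) - \sqrt{(1+x)^2 - 4(d+1)x}}{2(d+1)}.
\]

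Next, I would locate the dominant singularity by finding the smallest positive $x$ at which the radicand vanishes. Solving $(1+x)^2 = 4(d+1)x$, equivalently $x^2 - (4d+2)x + 1 = 0$, gives the two roots $(2d+1) \pm 2\sqrt{d^2+d}$, so the dominant singularity is
\[
\rho_d = (2d+1) - 2\sqrt{d^2+d}.
\]
Rationalizing (using that the product of the two roots equals $1$) gives $\rho_d^{-1} = (2d+1) + 2\sqrt{d^2+d}$, which is the conjectured growth rate.

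Finally, to upgrade exponential growth to convergence of the ratio $t_d(n+1)/t_d(n)$, I would invoke standard singularity analysis for algebraic generating functions (e.g., Flajolet--Sedgewick). The function $T_d(x)$ has a square-root branch point at $x = \rho_d$ and is analytic in a slit neighborhood of the closed disk of radius $\rho_d$, so a local expansion of the form $T_d(x) = T_d(\rho_d) - c_d \sqrt{1 - x/\rho_d} + O(1-x/\rho_d)$ yields
\[
t_d(n) \sim \frac{c_d}{2\sqrt{\pi}}\, n^{-3/2} \rho_d^{-n},
\]
and hence $t_d(n+1)/t_d(n) \to \rho_d^{-1} = (2d+1) + 2\sqrt{d^2+d}$. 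The main point to verify is that $\rho_d$ is the \emph{unique} singularity on the circle $|x| = \rho_d$; this follows from the fact that $T_d$ has strictly positive coefficients from degree $1$ onward (so the series is aperiodic), ensuring that $\rho_d$ is the only dominant singularity and that the Flajolet--Sedgewick transfer theorem applies cleanly.
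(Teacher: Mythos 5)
The paper does not prove this proposition; it simply cites it from the author's earlier work~\cite{Au21}, so there is no in-paper proof to compare against. That said, your argument is correct and is the standard route. The symbolic decomposition (a tree is a leaf or a root with a label from $[d]$ and an ordered sequence of at least two subtrees) gives $T_d = x + dT_d^2/(1-T_d)$, hence the quadratic $(d+1)T_d^2 - (1+x)T_d + x = 0$; the branch with $T_d(0)=0$ has a square-root singularity where the discriminant $x^2 - (4d+2)x + 1$ vanishes, and the smaller root $\rho_d = (2d+1) - 2\sqrt{d^2+d}$ (whose reciprocal is the larger root, since the two roots multiply to $1$) is the dominant singularity. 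Aperiodicity (all $t_d(n)>0$ for $n\ge 1$) ensures it is the unique singularity on $|x|=\rho_d$, so singularity analysis gives $t_d(n)\sim C\,n^{-3/2}\rho_d^{-n}$ and the ratio limit follows. One small remark: in the present paper the analogous asymptotic for $s_d(n)$ (Theorem~\ref{Thm:0402}) is obtained via the Flajolet--Sedgewick $y=x\phi(y)$ template (Theorem~\ref{Thm:0401}); your functional equation can be put in that form too, with $\phi(z) = (1-z)/(1-(d+1)z)$, which has nonnegative coefficients $1, d, d(d+1), d(d+1)^2,\dots$ --- so the two approaches (explicit quadratic solution versus the inverse-function framework) are interchangeable here and yield the same constant $\phi'(s) = \rho_d^{-1}$.
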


Thus, while $\displaystyle\lim_{d \to \infty} \lim_{n \to \infty} \frac{s_d(n+1)}{s_d(n)} = 4d+\frac{3}{2}$, $\displaystyle\lim_{d \to \infty} \lim_{n \to \infty} \frac{t_d(n+1)}{t_d(n)} = 4d+2$. Also, since the growth rate of $t_d(n)$ is strictly greater than that of $s_d(n)$, it follows that $\displaystyle\lim_{n \to \infty} \frac{s_d(n)}{t_d(n)} = 0$ for every $d \geq 1$.

%Computation for other parameters?

\section{Relating Decompositions and NECS}\label{Sec:05}

An immediate consequence of Theorem~\ref{Thm:0101} is that $s_1(n) = c(n)$ for all $n \geq 1$. In this section, we highlight some combinatorial connections between hypercube decompositions and NECS. 

\subsection{A bijection between $\S_{1,n}$ and $\C_n$}

Given an NECS $C = \set{ \res{a_i}{n_i} : i \in [k]}$, define $\gcd(C)$ to be the greatest common divisor of the moduli $n_1, \ldots, n_k$. Furthermore, when we discuss the gcd of a decomposition in $S \in \S_{1,n}$, we'll slightly abuse notation and write $\gcd(S) = r$ (instead of $\gcd(S) = (r)$, for ease of comparing gcd's of decompositions and NECS.

Next, we define the mapping $\Phi : \S_{1,n} \to \C_n$ recursively as follows:

\begin{itemize}
\item
($n=1$) $\Phi$ maps the trivial decomposition $\set{(0,1)}$ to $\set{ \res{0}{1}}$, the NECS with a single residual class.
\item
($n \geq 2$) Given $S \in \S_{1,n}$, let $r = \gcd(S)$, and $B_j = (\frac{j}{r}, \frac{j+1}{r})$ for every $j \in \set{0, \ldots, r-1}$. Given that $\gcd(S) = r$, we know that for every region $R \in S$, there is a unique $j$ where $R \subseteq B_j$. Thus, 
\[
S_j = \set{ \scale_{B_j \to (0,1)}(R) : R \in S, R \subseteq B_j},
\]
is a decomposition in its own right. We then define
\[
\Phi(S) = \bigcup_{j=0}^{r-1} \set{ E_{j,r}(T) : T \in \Phi(S_j)}.
\]
(Recall that $E_{j,r}( \res{a}{n} ) = \res{jn+a}{rn}$, as defined in Section~\ref{Sec:0103}.)
\end{itemize}

\begin{figure}[pt!]
\begin{center}
\begin{tabular}{l|l|c|c}
$S$ & $\Phi(S)$ & gcd & lcm \\
\hline
%n=1
\begin{tikzpicture}
[scale=5,thick, font=\small\sffamily]
\def\x{0.02}
\draw (0,0) -- (1,0);
\draw (0,{\x}) -- (0, {-\x});
\node[anchor = south] at ({0},{\x}) (1) {$0$};
\draw (1,{\x}) -- (1, {-\x});
\node[anchor = south] at ({1},{\x}) (1) {$1$};
\end{tikzpicture}
& $\set{ \res{0}{1}}$ & $1$ & $1$  \\
%n=2
\begin{tikzpicture}
[scale=5,thick, font=\small\sffamily]
\def\x{0.02}
\draw (0,0) -- (1,0);
\draw (0,{\x}) -- (0, {-\x});
\node[anchor = south] at ({0},{\x}) (1) {$0$};
\draw (1,{\x}) -- (1, {-\x});
\node[anchor = south] at ({1},{\x}) (1) {$1$};
\draw ({1/2},{\x}) -- ({1/2}, {-\x});
\node[anchor = south] at ({1/2},{\x}) (1) {$\frac{1}{2}$};
\end{tikzpicture}
& $\set{ \res{0}{2}, \res{1}{2}}$ & $2$ & $2$ \\
%n=3
\begin{tikzpicture}
[scale=5,thick, font=\small\sffamily]
\def\x{0.02}
\draw (0,0) -- (1,0);
\draw (0,{\x}) -- (0, {-\x});
\node[anchor = south] at ({0},{\x}) (1) {$0$};
\draw (1,{\x}) -- (1, {-\x});
\node[anchor = south] at ({1},{\x}) (1) {$1$};
\draw ({1/2},{\x}) -- ({1/2}, {-\x});
\node[anchor = south] at ({1/2},{\x}) (1) {$\frac{1}{2}$};
\draw ({1/4},{\x}) -- ({1/4}, {-\x});
\node[anchor = south] at ({1/4},{\x}) (1) {$\frac{1}{4}$};
\end{tikzpicture}
& $\set{ \res{0}{4}, \res{2}{4}, \res{1}{2}}$ & $2$ & $4$  \\
\begin{tikzpicture}
[scale=5,thick, font=\small\sffamily]
\def\x{0.02}
\draw (0,0) -- (1,0);
\draw (0,{\x}) -- (0, {-\x});
\node[anchor = south] at ({0},{\x}) (1) {$0$};
\draw (1,{\x}) -- (1, {-\x});
\node[anchor = south] at ({1},{\x}) (1) {$1$};
\draw ({1/2},{\x}) -- ({1/2}, {-\x});
\node[anchor = south] at ({1/2},{\x}) (1) {$\frac{1}{2}$};
\draw ({3/4},{\x}) -- ({3/4}, {-\x});
\node[anchor = south] at ({3/4},{\x}) (1) {$\frac{3}{4}$};
\end{tikzpicture}
& $\set{ \res{0}{2}, \res{1}{4}, \res{3}{4}}$ & $2$ & $4$  \\
\begin{tikzpicture}
[scale=5,thick, font=\small\sffamily]
\def\x{0.02}
\draw (0,0) -- (1,0);
\draw (0,{\x}) -- (0, {-\x});
\node[anchor = south] at ({0},{\x}) (1) {$0$};
\draw (1,{\x}) -- (1, {-\x});
\node[anchor = south] at ({1},{\x}) (1) {$1$};
\draw ({1/3},{\x}) -- ({1/3}, {-\x});
\node[anchor = south] at ({1/3},{\x}) (1) {$\frac{1}{3}$};
\draw ({2/3},{\x}) -- ({2/3}, {-\x});
\node[anchor = south] at ({2/3},{\x}) (1) {$\frac{2}{3}$};
\end{tikzpicture}
& $\set{ \res{0}{3}, \res{1}{3}, \res{2}{3}}$ & $3$ & $3$  \\
%n=4
\begin{tikzpicture}
[scale=5,thick, font=\small\sffamily]
\def\x{0.02}
\draw (0,0) -- (1,0);
\draw (0,{\x}) -- (0, {-\x});
\node[anchor = south] at ({0},{\x}) (1) {$0$};
\draw (1,{\x}) -- (1, {-\x});
\node[anchor = south] at ({1},{\x}) (1) {$1$};
\draw ({1/2},{\x}) -- ({1/2}, {-\x});
\node[anchor = south] at ({1/2},{\x}) (1) {$\frac{1}{2}$};
\draw ({1/6},{\x}) -- ({1/6}, {-\x});
\node[anchor = south] at ({1/6},{\x}) (1) {$\frac{1}{6}$};
\draw ({1/3},{\x}) -- ({1/3}, {-\x});
\node[anchor = south] at ({1/3},{\x}) (1) {$\frac{2}{6}$};
\end{tikzpicture}
& $\set{ \res{0}{6}, \res{2}{6}, \res{4}{6}, \res{1}{2}}$ & $2$ & $6$  \\

\begin{tikzpicture}
[scale=5,thick, font=\small\sffamily]
\def\x{0.02}
\draw (0,0) -- (1,0);
\draw (0,{\x}) -- (0, {-\x});
\node[anchor = south] at ({0},{\x}) (1) {$0$};
\draw (1,{\x}) -- (1, {-\x});
\node[anchor = south] at ({1},{\x}) (1) {$1$};
\draw ({1/2},{\x}) -- ({1/2}, {-\x});
\node[anchor = south] at ({1/2},{\x}) (1) {$\frac{1}{2}$};
\draw ({2/3},{\x}) -- ({2/3}, {-\x});
\node[anchor = south] at ({2/3},{\x}) (1) {$\frac{4}{6}$};
\draw ({5/6},{\x}) -- ({5/6}, {-\x});
\node[anchor = south] at ({5/6},{\x}) (1) {$\frac{5}{6}$};
\end{tikzpicture}
& $\set{ \res{0}{2}, \res{1}{6}, \res{3}{6}, \res{5}{6}}$ & $2$ & $6$  \\
\begin{tikzpicture}
[scale=5,thick, font=\small\sffamily]
\def\x{0.02}
\draw (0,0) -- (1,0);
\draw (0,{\x}) -- (0, {-\x});
\node[anchor = south] at ({0},{\x}) (1) {$0$};
\draw (1,{\x}) -- (1, {-\x});
\node[anchor = south] at ({1},{\x}) (1) {$1$};
\draw ({1/2},{\x}) -- ({1/2}, {-\x});
\node[anchor = south] at ({1/2},{\x}) (1) {$\frac{1}{2}$};
\draw ({1/4},{\x}) -- ({1/4}, {-\x});
\node[anchor = south] at ({1/4},{\x}) (1) {$\frac{1}{4}$};
\draw ({1/8},{\x}) -- ({1/8}, {-\x});
\node[anchor = south] at ({1/8},{\x}) (1) {$\frac{1}{8}$};
\end{tikzpicture}
& $\set{ \res{0}{8}, \res{4}{8}, \res{2}{4}, \res{1}{2}}$ & $2$ & $8$  \\
\begin{tikzpicture}
[scale=5,thick, font=\small\sffamily]
\def\x{0.02}
\draw (0,0) -- (1,0);
\draw (0,{\x}) -- (0, {-\x});
\node[anchor = south] at ({0},{\x}) (1) {$0$};
\draw (1,{\x}) -- (1, {-\x});
\node[anchor = south] at ({1},{\x}) (1) {$1$};
\draw ({1/2},{\x}) -- ({1/2}, {-\x});
\node[anchor = south] at ({1/2},{\x}) (1) {$\frac{1}{2}$};
\draw ({1/4},{\x}) -- ({1/4}, {-\x});
\node[anchor = south] at ({1/4},{\x}) (1) {$\frac{1}{4}$};
\draw ({3/8},{\x}) -- ({3/8}, {-\x});
\node[anchor = south] at ({3/8},{\x}) (1) {$\frac{3}{8}$};
\end{tikzpicture}
& $\set{ \res{0}{4}, \res{2}{8}, \res{6}{8}, \res{1}{2}}$ & $2$ & $8$  \\
\begin{tikzpicture}
[scale=5,thick, font=\small\sffamily]
\def\x{0.02}
\draw (0,0) -- (1,0);
\draw (0,{\x}) -- (0, {-\x});
\node[anchor = south] at ({0},{\x}) (1) {$0$};
\draw (1,{\x}) -- (1, {-\x});
\node[anchor = south] at ({1},{\x}) (1) {$1$};
\draw ({1/2},{\x}) -- ({1/2}, {-\x});
\node[anchor = south] at ({1/2},{\x}) (1) {$\frac{1}{2}$};
\draw ({3/4},{\x}) -- ({3/4}, {-\x});
\node[anchor = south] at ({3/4},{\x}) (1) {$\frac{3}{4}$};
\draw ({5/8},{\x}) -- ({5/8}, {-\x});
\node[anchor = south] at ({5/8},{\x}) (1) {$\frac{5}{8}$};
\end{tikzpicture}
& $\set{ \res{0}{2}, \res{1}{8}, \res{5}{8}, \res{3}{4}}$ & $2$ & $8$  \\
\begin{tikzpicture}
[scale=5,thick, font=\small\sffamily]
\def\x{0.02}
\draw (0,0) -- (1,0);
\draw (0,{\x}) -- (0, {-\x});
\node[anchor = south] at ({0},{\x}) (1) {$0$};
\draw (1,{\x}) -- (1, {-\x});
\node[anchor = south] at ({1},{\x}) (1) {$1$};
\draw ({1/2},{\x}) -- ({1/2}, {-\x});
\node[anchor = south] at ({1/2},{\x}) (1) {$\frac{1}{2}$};
\draw ({3/4},{\x}) -- ({3/4}, {-\x});
\node[anchor = south] at ({3/4},{\x}) (1) {$\frac{3}{4}$};
\draw ({7/8},{\x}) -- ({7/8}, {-\x});
\node[anchor = south] at ({7/8},{\x}) (1) {$\frac{7}{8}$};
\end{tikzpicture}
& $\set{ \res{0}{2}, \res{1}{4}, \res{3}{8}, \res{7}{8}}$ & $2$ & $8$  \\
\begin{tikzpicture}
[scale=5,thick, font=\small\sffamily]
\def\x{0.02}
\draw (0,0) -- (1,0);
\draw (0,{\x}) -- (0, {-\x});
\node[anchor = south] at ({0},{\x}) (1) {$0$};
\draw (1,{\x}) -- (1, {-\x});
\node[anchor = south] at ({1},{\x}) (1) {$1$};
\draw ({1/3},{\x}) -- ({1/3}, {-\x});
\node[anchor = south] at ({1/3},{\x}) (1) {$\frac{1}{3}$};
\draw ({2/3},{\x}) -- ({2/3}, {-\x});
\node[anchor = south] at ({2/3},{\x}) (1) {$\frac{2}{3}$};
\draw ({1/6},{\x}) -- ({1/6}, {-\x});
\node[anchor = south] at ({1/6},{\x}) (1) {$\frac{1}{6}$};
\end{tikzpicture}
& $\set{ \res{0}{6}, \res{3}{6}, \res{1}{3}, \res{2}{3}}$ & $3$ & $6$  \\
\begin{tikzpicture}
[scale=5,thick, font=\small\sffamily]
\def\x{0.02}
\draw (0,0) -- (1,0);
\draw (0,{\x}) -- (0, {-\x});
\node[anchor = south] at ({0},{\x}) (1) {$0$};
\draw (1,{\x}) -- (1, {-\x});
\node[anchor = south] at ({1},{\x}) (1) {$1$};
\draw ({1/3},{\x}) -- ({1/3}, {-\x});
\node[anchor = south] at ({1/3},{\x}) (1) {$\frac{1}{3}$};
\draw ({2/3},{\x}) -- ({2/3}, {-\x});
\node[anchor = south] at ({2/3},{\x}) (1) {$\frac{2}{3}$};
\draw ({3/6},{\x}) -- ({3/6}, {-\x});
\node[anchor = south] at ({3/6},{\x}) (1) {$\frac{3}{6}$};
\end{tikzpicture}
& $\set{ \res{0}{3}, \res{1}{6}, \res{4}{6}, \res{2}{3}}$ & $3$ & $6$  \\
\begin{tikzpicture}
[scale=5,thick, font=\small\sffamily]
\def\x{0.02}
\draw (0,0) -- (1,0);
\draw (0,{\x}) -- (0, {-\x});
\node[anchor = south] at ({0},{\x}) (1) {$0$};
\draw (1,{\x}) -- (1, {-\x});
\node[anchor = south] at ({1},{\x}) (1) {$1$};
\draw ({1/3},{\x}) -- ({1/3}, {-\x});
\node[anchor = south] at ({1/3},{\x}) (1) {$\frac{1}{3}$};
\draw ({2/3},{\x}) -- ({2/3}, {-\x});
\node[anchor = south] at ({2/3},{\x}) (1) {$\frac{2}{3}$};
\draw ({5/6},{\x}) -- ({5/6}, {-\x});
\node[anchor = south] at ({5/6},{\x}) (1) {$\frac{5}{6}$};
\end{tikzpicture}
& $\set{ \res{0}{3}, \res{1}{3}, \res{2}{6}, \res{5}{6}}$ & $3$ & $6$  \\
\begin{tikzpicture}
[scale=5,thick, font=\small\sffamily]
\def\x{0.02}
\draw (0,0) -- (1,0);
\draw (0,{\x}) -- (0, {-\x});
\node[anchor = south] at ({0},{\x}) (1) {$0$};
\draw (1,{\x}) -- (1, {-\x});
\node[anchor = south] at ({1},{\x}) (1) {$1$};
\draw ({1/4},{\x}) -- ({1/4}, {-\x});
\node[anchor = south] at ({1/4},{\x}) (1) {$\frac{1}{4}$};
\draw ({2/4},{\x}) -- ({2/4}, {-\x});
\node[anchor = south] at ({2/4},{\x}) (1) {$\frac{2}{4}$};
\draw ({3/4},{\x}) -- ({3/4}, {-\x});
\node[anchor = south] at ({3/4},{\x}) (1) {$\frac{3}{4}$};
\end{tikzpicture}
& $\set{ \res{0}{4}, \res{1}{4}, \res{2}{4}, \res{3}{4}}$ & $4$ & $4$ 
\end{tabular}
\caption{Illustrating the mapping $\Phi : \S_{1,n} \to \C_n$ for $n \leq 4$.}\label{Fig:0501}
\end{center}
\end{figure}

Figure~\ref{Fig:0501} illustrates the mapping $\Phi$ applied to elements in $\S_{1,n}$ for $n \leq 4$. Next, we prove a result that will help us show that $\Phi$ is in fact a bijection.

\begin{lemma}\label{Lem:0501}
Let $S \in \S_{1,n}$. If $\gcd(S) = r$, then $\gcd(\Phi(S)) = r$.
\end{lemma}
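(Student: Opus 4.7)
The plan is to prove the lemma by strong induction on $n$. The base case $n=1$ is immediate: the only element of $\S_{1,1}$ is $\{(0,1)\}$, and both it and its image $\{\res{0}{1}\}$ have gcd equal to $1$.

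For the inductive step, fix $n \geq 2$ and $S \in \S_{1,n}$, and set $r = \gcd(S)$. Observe first that $r \geq 2$: any splitting sequence producing such an $S$ must begin with an $r'$-split of $(0,1)$ for some $r' \geq 2$, whence $S \succeq D_{(r')}$ and so $r \geq r' \geq 2$. With $B_j = (j/r,(j+1)/r)$ and $S_j = \set{\scale_{B_j\to (0,1)}(R) : R \in S,\ R \subseteq B_j}$ for each $j\in\set{0,\ldots,r-1}$, the bound $r\geq 2$ guarantees $|S_j| < n$ for every $j$, so the inductive hypothesis applies and yields $\gcd(\Phi(S_j)) = \gcd(S_j)$.

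From the recursive definition of $\Phi$, every modulus appearing in $\Phi(S)$ has the form $rm$, where $m$ is a modulus of some class in some $\Phi(S_j)$. Consequently
\[
\gcd(\Phi(S)) \;=\; r \cdot \gcd_{0\le j\le r-1}\gcd(\Phi(S_j)) \;=\; r \cdot \gcd_{0\le j\le r-1}\gcd(S_j),
\]
so it suffices to show $\gcd_{j}\gcd(S_j) = 1$. Suppose instead that some prime $p$ divides $\gcd(S_j)$ for every $j$. Then each $S_j \succeq D_{(p)}$, and transferring this across the rescaling $\scale_{B_j\to(0,1)}$ shows that the portion of $S$ inside each $B_j$ refines the $p$-split of $B_j$. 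Combining these $r$ refinements gives $S \succeq D_{(rp)}$, contradicting the maximality of $r = \gcd(S)$.

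The main obstacle is the last refinement-transfer step, namely verifying cleanly that $S_j \succeq D_{(p)}$ for every $j$ really implies $S \succeq D_{(rp)}$. This is a structural property of how the top-level $r$-split $D_{(r)}$ interacts with uniform $p$-splits of each block, and it should follow directly from the scaling framework developed in Section~\ref{Sec:02}; nonetheless, the bookkeeping is the least mechanical part of the argument.
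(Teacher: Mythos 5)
Your proof is correct and follows essentially the same route as the paper: induct on $n$, pass to the rescaled pieces $S_j$, apply the inductive hypothesis, note that $E_{j,r}$ multiplies all moduli by $r$, and observe that maximality of $r=\gcd(S)$ forces $\gcd_j\gcd(S_j)=1$. The one place you expand on the paper is in spelling out why $\gcd_j\gcd(S_j)=1$ (via a common prime $p$ and the refinement $S\succeq D_{(rp)}$), a step the paper simply asserts from the maximality in the definition of gcd; the ``refinement-transfer'' you flag is exactly the content of Lemma~\ref{Lem:0201}'s bijection applied to the blocks $B_j$, so it is sound.
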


\begin{proof}
We prove our claim by induction on $n$. When $n=1$, then $S$ must be the trivial decomposition $\set{ (0,1)}$, and the claim holds as $\gcd(S) = \gcd(\Phi(S))= 1$. 

Now assume that $n \geq 2$, and so $r \geq 2$. Consider the decompositions $S_0, \ldots, S_{r-1}$ as defined in the definition of $\Phi$. Since $r = \gcd(S)$ is chosen maximally, it follows that
\[
\gcd \set{ \gcd(S_j) : j \in \set{0,\ldots, r-1}} = 1.
\]
(The outer $\gcd$ is the ordinary greatest common divisor operation applied to a set of natural numbers.) By the inductive hypothesis, $\gcd(S_j) = \gcd(\Phi(S_j))$ for all $j$, and so we obtain that
\[
\gcd \set{ \gcd(\Phi(S_j)) : j \in \set{0,\ldots, r-1}} = 1.
\]
Since $E_{j,r}$ multiplies each modulus of the residual classes in $\Phi(S_j)$ by $r$, we see that 
\[
\gcd\set{ E_{j,r}(T) : T \in \Phi(S_j)} = r \gcd(\Phi(S_j))
\]
for every $j \in \set{0, \ldots, r-1}$, and so it follows that 
\[
\gcd(\Phi(S)) = \gcd\set{ E_{j,r}(T) : j \in \set{0,\ldots, r-1}, T \in \Phi(S_j)} = r. \mbox{\qedhere}
\]
\end{proof}

Since $\Phi$ is gcd-preserving from Lemma~\ref{Lem:0501}, we are able to use a single column in Figure~\ref{Fig:0501} to indicate the gcd of both the input decomposition and the output NECS. Next, we prove that $\Phi$ is indeed a bijection.

\begin{proposition}
$\Phi : \S_{1,n} \to \C_n$ is a bijection.
\end{proposition}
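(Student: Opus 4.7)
The plan is to prove bijectivity by induction on $n$, constructing an explicit inverse $\Psi : \C_n \to \S_{1,n}$ that mirrors the recursive structure of $\Phi$. The base case $n = 1$ is immediate since both $\S_{1,1}$ and $\C_1$ are singletons (namely $\set{(0,1)}$ and $\set{\res{0}{1}}$), and $\Phi$ maps one to the other.

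For the inductive step, assume the result holds for all smaller values. Given $C \in \C_n$ with $n \geq 2$, let $r = \gcd(C)$; since $C \neq \set{\res{0}{1}}$, we have $r \geq 2$. Every $\res{a}{m} \in C$ satisfies $r \mid m$, so the residue $a \bmod r$ is well-defined, and I partition
\[
C = \bigcup_{j=0}^{r-1} C_j, \quad \text{where } C_j = \set{\res{a}{m} \in C : a \equiv j \pmod r}.
\]
Each $C_j$ partitions $\res{j}{r}$, and the map sending $\res{jm+a}{rm} \mapsto \res{a}{m}$ (for $a \in \set{0,\ldots,m-1}$) yields a set $C'_j$ of residue classes on $\mZ$ with $|C'_j| < n$. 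By the inductive hypothesis applied to each $C'_j$, there exists a unique $S'_j \in \S_{1,|C'_j|}$ with $\Phi(S'_j) = C'_j$, and I set
\[
\Psi(C) = \bigcup_{j=0}^{r-1} \set{ \scale_{(0,1) \to B_j}(R) : R \in S'_j},
\]
where $B_j = (j/r, (j+1)/r)$.

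From here the argument splits into three verifications. First, each $C'_j$ must itself be an NECS (i.e., obtainable by a sequence of splitting operations), so that the inductive hypothesis applies. Second, $\gcd(\Psi(C)) = r$, which requires that $\gcd\set{\gcd(C'_0), \ldots, \gcd(C'_{r-1})} = 1$; this follows from the maximality of $r = \gcd(C)$ together with Lemma~\ref{Lem:0501} applied recursively. Third, the identities $\Phi \circ \Psi = \mathrm{id}_{\C_n}$ and $\Psi \circ \Phi = \mathrm{id}_{\S_{1,n}}$ follow by comparing the recursive definitions of $\Phi$ and $\Psi$: given $S \in \S_{1,n}$ with $\gcd(S) = r$, the NECS $\Phi(S)$ partitions exactly as above with $C_j = \set{E_{j,r}(T) : T \in \Phi(S_j)}$, so $C'_j = \Phi(S_j)$ and the two constructions unfold in lockstep.

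The main obstacle is the first verification: showing that every NECS $C$ with $\gcd(C) = r$ can be built from a splitting sequence that starts by $r$-splitting $\res{0}{1}$, so that the pieces $C'_j$ are genuine elements of $\C$. This is the NECS analogue of Lemma~\ref{Lem:0202}, and the argument proceeds by induction on the length of the splitting sequence producing $C$: any $r$-split can be commuted past earlier splits that act on disjoint residue classes, allowing us to reorder operations so that an $r$-split of $\res{0}{1}$ occurs first. Once this normalization is established, $\Psi$ is well-defined, and the three verifications above complete the proof.
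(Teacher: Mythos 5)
The paper's proof is considerably shorter and more elegant than yours, and the contrast is worth understanding. Since Theorem~\ref{Thm:0101} already gives $s_1(n) = c(n)$, the paper observes that $\S_{1,n}$ and $\C_n$ are finite sets of equal size, so it suffices to prove $\Phi$ is injective. Injectivity then follows by induction: if $\Phi(S) = \Phi(S')$ then by Lemma~\ref{Lem:0501} both have the same gcd $r$, the recursive pieces satisfy $\Phi(S_j) = \Phi(S'_j)$, and the inductive hypothesis (applied to the smaller $S_j$) forces $S_j = S'_j$ for all $j$, hence $S = S'$. You instead try to construct an explicit two-sided inverse $\Psi : \C_n \to \S_{1,n}$, which requires proving surjectivity directly --- exactly the work the cardinality argument lets the paper skip.

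Your route has a genuine gap, and you partially flag it yourself: for $\Psi$ to be well-defined, you must show that when $C \in \C_n$ has $\gcd(C) = r$, each descaled piece $C'_j$ is itself a natural exact covering system, i.e., lies in $\C$. Your proposed argument --- commuting ``$r$-splits past earlier splits'' to get an $r$-split of $\res{0}{1}$ to the front --- does not work as stated. First, the splitting sequence producing $C$ need not contain any $r$-split at all: for instance, $\set{\res{0}{6},\dots,\res{5}{6}}$ has gcd $6$ but can be built entirely from one $2$-split followed by two $3$-splits, with no $6$-split anywhere. Second, even when an $r$-split does appear, it may split some $\res{a}{m}$ with $m > 1$ rather than $\res{0}{1}$, so ``moving it to the front'' is not a mere reordering. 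Note that the analogous fact in the decomposition setting comes for free from the definition of $\gcd(S)$ (which is phrased directly in terms of the refinement $S \succeq D_{(r)}$, so the splitting sequence from $D_{(r)}$ to $S$ is handed to you), whereas $\gcd(C)$ for an NECS is defined purely arithmetically as the gcd of the moduli, and the bridge from ``$r$ divides every modulus'' to ``$C$ refines the $r$-split of $\res{0}{1}$'' is precisely what needs to be proved. That bridge is true, but establishing it requires a more careful induction than you give, and the paper's proof sidesteps the entire issue.
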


\begin{proof}
From Theorem~\ref{Thm:0101}, we know that $|\S_{1,n}| = |\C_n|$, and so every function from $\S_{1,n}$ to $\C_n$ is either both one-to-one and onto, or neither. Thus, it suffices to show that $\Phi$ is one-to-one, which we will prove by induction on $n$. The base case $n=1$ obviously holds. 

For a contradiction, suppose there exist distinct $S, S' \in \S_{1,n}$ where $\Phi(S) = \Phi(S')$. From Lemma~\ref{Lem:0501}, $\Phi(S) = \Phi(S')$ implies that $\gcd(S) = \gcd(S') = r$ for some integer $r \geq 2$. Consider the decompositions $S_0, \ldots, S_r$ and $S'_0, \ldots, S'_r$ as defined in the definition of $\Phi$. Now $S \neq S'$ implies that there exists an index $j$ where $S_j \neq S'_j$. On the other hand, $\Phi(S) = \Phi(S')$ implies that $\Phi(S_j) = \Phi(S'_j)$. Since $S_j$ consists of strictly fewer regions than $S$, this violates our inductive hypothesis. Thus, it follows that $\Phi$ is bijective.
\end{proof}

\subsection{Counting decompositions/NECS with a given LCM}

We have established a map $\Phi$ between $1$-dimensional decompositions and NECS that is not only bijective, but also preserves some key combinatorial properties, such as the gcd as shown in Lemma~\ref{Lem:0501}. Thus, one can see hypercube decompositions as a generalization of NECS, and any result we prove for hypercube decompositions may also specialize to a corresponding implication for NECS. In this section, we provide one such example.

Similar to how we defined the gcd of a given hypercube decomposition, we can also define its lcm. Given a  $S \in \S_{d}$, we say that $\lcm(S) = (r_1, \ldots, r_d)$ if
\begin{itemize}
\item
$S \preceq D_{(r_1, \ldots, r_d)}$;
\item
there doesn't exist $(r_1',\ldots, r_d') \neq (r_1, \ldots, r_d)$ where $(r_1',\ldots, r_d') \leq (r_1, \ldots, r_d)$ and $S \preceq D_{(r_1', \ldots, r_d')}$.
\end{itemize}

Using the decompositions in Figure~\ref{Fig:0202} as examples, we have $\lcm(S_1) = (3,2)$, $\lcm(S_2) = (6,4)$, and $\lcm(S_3) = (6, 12)$. Then, given $r_1, \ldots, r_d \in \mN$, define the sets
\begin{align*}
\G_{(r_1, \ldots, r_d)}&=  \set{ S \in \S_d : S \preceq D_{(r_1, \ldots, r_d)} },\\
\Hc_{(r_1, \ldots, r_d)} &=  \set{ S \in \S_d : \lcm(S) =(r_1, \ldots, r_d) }.
\end{align*}

We also define $g(r_1, \ldots, r_d) = |\G_{(r_1, \ldots, r_d)}|$ and $h(r_1, \ldots, r_d) = |\Hc_{(r_1, \ldots, r_d)}|$. Since $\lcm(S) =(r_1, \ldots, r_d)$ implies that $S \preceq D_{(r_1, \ldots, r_d)}$, we see that $h(r_1, \ldots, r_d) \leq g(r_1,\ldots, r_d)$ for all $r_1, \ldots, r_d$. Also, when $r_1 = \cdots = r_d = 1$, it's obvious that $g(r_1,\ldots, r_d) = h(r_1,\ldots, r_d) = 1$. For the general case, we have the following recursive formulas:

\begin{proposition}\label{Prop:0501}
Given $r_1, \ldots, r_d \in \mN$,
\begin{itemize}
\item[(i)]
\[
g(r_1, \ldots, r_d) = 1 - \sum \left( \prod_{i=1}^d \mu(q_i) \right) g\left(\frac{r_1}{q_1}, \ldots, \frac{r_d}{q_d} \right)^{\prod_{i=1}^d q_i},
\]
where the sum is over $q_1, \ldots, q_d \in \mN$ where $q_i | r_i$ for every $i \in [d]$, and $\prod_{i=1}^d q_i \neq 1$. 
\item[(ii)]
\[
h(r_1, \ldots, r_d) = \sum \left( \prod_{i=1}^d \mu(q_i) \right) g\left(\frac{r_1}{q_1}, \ldots, \frac{r_d}{q_d} \right),
\]
where the sum is over $q_1, \ldots, q_d \in \mN$ where $q_i | r_i$ for every $i \in [d]$. 
\end{itemize}
\end{proposition}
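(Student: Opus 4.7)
I begin with (ii), which is a direct application of multi-dimensional M\"obius inversion. Every $S \in \G_{(r_1,\ldots,r_d)}$ has a unique $\lcm(S)=(r_1',\ldots,r_d')$, and the condition $S \preceq D_{(r_1,\ldots,r_d)}$ is equivalent to $r_i' \mid r_i$ for all $i$. Hence $\G_{(r_1,\ldots,r_d)} = \bigsqcup_{r'_i \mid r_i} \Hc_{(r_1',\ldots,r_d')}$ as a disjoint union, which gives
\[
g(r_1,\ldots,r_d) \;=\; \sum_{r_i' \mid r_i \text{ for all } i} h(r_1',\ldots,r_d').
\]
Applying the standard $d$-fold M\"obius inversion on the componentwise-divisibility poset yields (ii).

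For part (i), the strategy mirrors the structure of Section~\ref{Sec:02}. First, I establish an analogue of Lemma~\ref{Lem:0201} in the ``bounded'' setting: for all $q_i \mid r_i$,
\[
|\{ S \in \S_d : D_{(q_1,\ldots,q_d)} \preceq S \preceq D_{(r_1,\ldots,r_d)} \}| \;=\; g(r_1/q_1,\ldots,r_d/q_d)^{\prod_i q_i}.
\]
The bijection is cell-wise: letting $B_1,\ldots,B_{\prod q_i}$ be the cells of $D_{(q_1,\ldots,q_d)}$, any $S$ refining $D_{(q_1,\ldots,q_d)}$ decomposes as $S = \bigcup_j S_j$ where $S_j = \{R \in S : R \subseteq B_j\}$, and each rescaled $\scale_{B_j \to (0,1)^d}(S_j)$ is an element of $\S_d$. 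The upper-bound constraint $S \preceq D_{(r_1,\ldots,r_d)}$ then corresponds exactly to each rescaled $S_j$ lying in $\G_{(r_1/q_1,\ldots,r_d/q_d)}$, and the map is reversible.

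Next, as in the proof of Lemma~\ref{Lem:0202}, $S \succeq D_{(q_1,\ldots,q_d)}$ if and only if $q_i \mid \gcd(S)_i$ for every $i$, so partitioning the set on the left-hand side above by the value of $\gcd(S)$ gives
\[
g(r_1/q_1,\ldots,r_d/q_d)^{\prod_i q_i} \;=\; \sum_{q_i \mid a_i \mid r_i} |\{S \in \G_{(r_1,\ldots,r_d)} : \gcd(S) = (a_1,\ldots,a_d)\}|.
\]
Applying multi-dimensional M\"obius inversion to this identity and specializing to $(q_1,\ldots,q_d) = (1,\ldots,1)$, while using that $\gcd(S) = (1,\ldots,1)$ forces $S = \{(0,1)^d\}$, we obtain
\[
1 \;=\; \sum_{a_i \mid r_i} \Big(\prod_{i=1}^d \mu(a_i)\Big)\, g(r_1/a_1,\ldots,r_d/a_d)^{\prod_i a_i}.
\]
Isolating the $a_1 = \cdots = a_d = 1$ summand on the right, which contributes exactly $g(r_1,\ldots,r_d)$, and moving it to the left-hand side yields (i).

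The main obstacle is the cell-wise bijection in the first step of (i). One must check carefully that a valid splitting sequence for $S$ refining $D_{(q_1,\ldots,q_d)}$ decomposes into independent per-cell splitting sequences in $\S_d$, and that the upper-bound constraint $S \preceq D_{(r_1,\ldots,r_d)}$ indeed translates precisely into each per-cell decomposition lying in $\G_{(r_1/q_1,\ldots,r_d/q_d)}$. This is essentially a ``dual'' of Lemma~\ref{Lem:0201}, and once it is established, the rescaling formalism of Section~\ref{Sec:02} makes the bookkeeping clean, and the remaining M\"obius-inversion manipulations are routine.
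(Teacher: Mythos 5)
Your proposal is correct and reaches the same formulas, but it packages the argument a bit differently than the paper. For part (ii), the paper writes $\Hc_{(r_1, \ldots, r_d)} = \G_{(r_1, \ldots, r_d)} \setminus \bigcup_{i \in [d], p \in \mP(r_i)} \G_{(r_1, \ldots, r_i/p, \ldots, r_d)}$ and evaluates by explicit inclusion-exclusion; you instead note the disjoint partition $\G_{(r_1,\ldots,r_d)} = \bigsqcup_{r'_i \mid r_i} \Hc_{(r'_1,\ldots,r'_d)}$ and invert $g = \sum h$ directly, which is shorter and makes the M\"obius-inversion structure transparent rather than re-deriving it as an inclusion-exclusion. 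For part (i), both arguments hinge on the identical nontrivial cell-wise bijection giving $|\set{S : D_{(q_1,\ldots,q_d)} \preceq S \preceq D_{(r_1,\ldots,r_d)}}| = g(r_1/q_1,\ldots,r_d/q_d)^{\prod_i q_i}$ (the paper's equation~\eqref{Eq:0501}, your ``bounded Lemma~\ref{Lem:0201}''); after that, the paper applies inclusion-exclusion to the covering $\G_{(r_1,\ldots,r_d)} = \set{\set{(0,1)^d}} \cup \bigcup_{i,p}\W_{i,p}$, whereas you partition $\G$ by $\gcd(S)$, invert, and specialize at $(1,\ldots,1)$ using that $\gcd(S)=(1,\ldots,1)$ forces the trivial decomposition --- a route that nicely mirrors the proof of Theorem~\ref{Thm:0201}. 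The two organizations deliver the same result; yours makes the parallel with Section~\ref{Sec:02} explicit, while the paper's manipulates concretely-defined sets. One step you leave implicit in (ii): that $S \preceq D_{(r_1,\ldots,r_d)}$ iff each component of $\lcm(S)$ divides $r_i$ (equivalently, that $\set{(r_1,\ldots,r_d) : S \preceq D_{(r_1,\ldots,r_d)}}$ is closed under componentwise gcd, which makes $\lcm(S)$ well-defined); this is true since the cut locations of $S$ in coordinate $i$ have denominators dividing $r_i$ precisely when $\lcm(S)_i \mid r_i$, and the paper also uses it without comment.
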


\begin{proof}
We first prove $(i)$. Given $n \in \mN$, let $\mP(n)$ be the set of prime divisors of $n$ (e.g., $\mP(40) = \set{2,5}$). Now suppose $S \preceq D_{(r_1, \ldots, r_d)}$. Then either $S = \set{ (0,1)^d}$, or $S$  refines $\set{H_{i,p}((0,1)^d)}$ for some $i \in [d]$ and $p \in \mP(r_i)$. Thus, if we define
\[
\W_{i,p} = \set{ D \in \S_d : \set{H_{i,p}((0,1)^d)} \preceq D \preceq D_{(r_1, \ldots, r_d)} },
\]
then we see that
\[
\G_{(r_1,\ldots, r_d)}= \set{ \set{(0,1)^d}} \cup \bigcup_{i \in [d], p \in \mP(r_i)} \W_{i,p}.
\]
We apply the principle of inclusion-exclusion to the above to compute the size of $\G_{(r_1,\ldots, r_d)}$. For convenience, given a finite set $S \subseteq \mN$, we let $\pi(S)$ denote the product of all elements in $S$. Then notice that each intersection of a collection of $k$ sets from $\set{ \W_{i,p} : i \in [d], p \in \mP(r_i)}$ can be uniquely written as
\begin{equation}\label{Eq:0501}
\bigcap_{i=1}^d \left( \bigcap_{p \in P_i} \W_{i,p} \right) =  \set{ D \in \S_d : D_{(\pi(P_1), \ldots, \pi(P_d))} \preceq D \preceq D_{(r_1, \ldots, r_d)} },
\end{equation}
for some choice of $P_1, \ldots, P_d$ where $P_i \subseteq \mP(r_i)$ for every $i \in [d]$, and $\sum_{i=1}^d |P_i| = k$ (note that some of the $P_i$'s could be empty, in which case $\pi(P_i) = 1$). Now, the decomposition $D_{(\pi(P_1), \ldots, \pi(P_d))}$ consists of $m = \prod_{i=1}^d \pi(P_i)$ regions --- let's denote them by $B_1, \ldots, B_m$. Now, if $S$ is a decomposition that belongs to~\eqref{Eq:0501}, then $S \succeq D_{(\pi(P_1), \ldots, \pi(P_d))}$, and so
\[
S_j = \set{ \scale_{B_j \to (0,1)^d}(R) : R \in S, R \subseteq B_j}
\]
is a decomposition in its own right for every $j \in [m]$. Furthermore, $S \preceq D_{(r_1,\ldots, r_d)}$ implies that $S_j \preceq D_{(r_1/\pi(P_1), \ldots, r_d/\pi(P_d))}$. Thus, we see that there is a bijection between~\eqref{Eq:0501} and $\G_{(r_1/\pi(P_1), \ldots, r_d/\pi(P_d))}^m$, and so~\eqref{Eq:0501} has size $g\left( \frac{r_1}{\pi(P_1)}, \ldots, \frac{r_d}{\pi(P_d)}\right)^{\prod_{i=1}^d  \pi(P_i)}$. This gives
 \begin{align*}
& g(r_1,\ldots, r_d) \\
={}&1 +  \left| \bigcup_{i \in [d], p \in \mP(r_i)} \W_{i,p} \right|\\
={}& 1+ \sum_{k \geq 1} (-1)^{k-1}  \left( \sum_{\substack{P_1 \subseteq \mP(r_1), \ldots, P_d \subseteq \mP(r_i) \\ \sum_{i=1}^d |P_i|= k}} \left| \set{ D \in \S_d : D_{(\pi(P_1), \ldots, \pi(P_d))} \preceq D \preceq D_{(r_1, \ldots, r_d)} } \right|  \right) \\
={}& 1- \sum_{k \geq 1} (-1)^{k}  \left( \sum_{\substack{P_1 \subseteq \mP(r_1), \ldots, P_d \subseteq \mP(r_i) \\ \sum_{i=1}^d |P_i|= k}} g\left( \frac{r_1}{\pi(P_1)}, \ldots, \frac{r_d}{\pi(P_d)}\right)^{\prod_{i=1}^d \pi(P_i)} \right).
  \end{align*}
Next, notice that if we let $q_i = \pi(P_i)$ for every $i \in [d]$, then
\begin{equation}\label{Eq:0502}
  \prod_{i=1}^d \mu(q_i) =   \prod_{i=1}^d (-1)^{|P_i|} = (-1)^k.
\end{equation}
Thus, the sum above can be rewritten as
\[
1 - \sum \left( \prod_{i=1}^d \mu(q_i) \right) g\left(\frac{r_1}{q_1}, \ldots, \frac{r_d}{q_d} \right)^{\prod_{i=1}^d q_i},
\]
where the sum is over $q_1, \ldots, q_d$ where $q_i$ is a product of prime divisors of $r_i$ for every $i$, and the condition $\sum_{i=1}^d |P_i| = k \geq 1$ translates to $\prod_{i=1}^d q_i \neq 1$. Moreover, since $\mu(q_i) = 0$ when $q_i$ is divisible by a non-trivial square, the sum above would remain the same if we expanded it to include all $q_i$'s that are divisors of $r_i$, which results in the claimed formula.

Next, we prove $(ii)$ using a similar inclusion-exclusion argument as above. Given $S \in \G_{(r_1, \ldots, r_d)}$, then either $\lcm(S) = (r_1, \ldots, r_d)$ and $S \in \Hc_{(r_1,\ldots,r_d)}$, or there exists $i \in [d]$ and $p \in \mP(r_i)$ such that $S \in \G_{(r_1, \ldots, r_i/p, \ldots, r_d)}$. Thus,
\[
\Hc_{(r_1, \ldots, r_d)} = \G_{(r_1, \ldots, r_d)} \setminus  \bigcup_{i \in [d], p \in \mP(r_i)} \G_{(r_1, \ldots, r_i/p, \ldots, r_d)}.
\]
Now each intersection of a collection of $k$ sets from $\set{ \G_{(r_1, \ldots, r_i/p, \ldots, r_d)} : i \in [d], p \in \mP(r_i)}$ can be uniquely written as
\[
\bigcap_{i=1}^d \left( \bigcap_{p \in P_i} \G_{(r_1, \ldots, r_i/p, \ldots, r_d)} \right) =  \G_{(r_1/\pi(P_1), \ldots, (r_d/\pi(P_d))},
\]
for some choice of $P_1, \ldots, P_d$ where $P_i \subseteq \mP(r_i)$ for every $i \in [d]$, and $\sum_{i=1}^d |P_i| = k$. Thus, we obtain that
 \begin{align*}
& h(r_1,\ldots, r_d) \\
={}& g(r_1, \ldots, r_d) -  \left| \bigcup_{i \in [d], p \in \mP(r_i)}  \G_{(r_1, \ldots, r_i/p, \ldots, r_d)} \right|\\
={}& g(r_1, \ldots, r_d)-  \sum_{k \geq 1} (-1)^{k-1}  \left( \sum_{\substack{P_1 \subseteq \mP(r_1), \ldots, P_d \subseteq \mP(r_i) \\ \sum_{i=1}^d |P_i|= k}} \left|  \G_{(r_1/\pi(P_1), \ldots, (r_d/\pi(P_d))} \right|  \right) \\
={}& g(r_1, \ldots, r_d)+  \sum_{k \geq 1} (-1)^{k}  \left( \sum_{\substack{P_1 \subseteq \mP(r_1), \ldots, P_d \subseteq \mP(r_i) \\ \sum_{i=1}^d |P_i|= k}} g\left( \frac{r_1}{\pi(P_1)}, \ldots, \frac{r_d}{\pi(P_d)} \right)  \right) \\
={}&  \sum_{k \geq 0} (-1)^{k}  \left( \sum_{\substack{P_1 \subseteq \mP(r_1), \ldots, P_d \subseteq \mP(r_i) \\ \sum_{i=1}^d |P_i|= k}} g\left( \frac{r_1}{\pi(P_1)}, \ldots, \frac{r_d}{\pi(P_d)} \right)   \right) \\
={}& \sum \left( \prod_{i=1}^d \mu(q_i) \right) g\left( \frac{r_1}{q_1}, \ldots, \frac{r_d}{q_d} \right),
 \end{align*}
 where we made the substitution $q_i = \pi(P_i)$ in the last equality and applied~\eqref{Eq:0502}. For the same rationale as in the proof of $(i)$, the above summation can be taken over all $q_1,\ldots, q_d$ where $q_i | r_i$, and our claim follows.
\end{proof}

With Proposition~\ref{Prop:0501}, we obtain a recursive formula to compute the number of hypercube decompositions with a given lcm. In the case of $d=1$, we obtain the following sequences:
\[
\begin{array}{l|rrrrrrrrrrrrrrrrr}
n &  1 & 2 & 3 & 4 & 5 & 6 & 7 & 8 & 9 & 10& 11 & 12 & 13 & 14 & 15 & 16 & \cdots \\
\hline
g(n) &
1 & 2 & 2 & 5 & 2 & 12 & 2 & 26 & 9 & 36 &  2 & 206 & 2 &132& 40&677& \cdots \\
h(n) &
1 & 1 & 1 & 3 & 1 & 9 & 1 & 21 & 7 & 33 & 1 & 191 & 1 & 129 & 37 & 651 & \cdots 
 \end{array}
 \]
Moreover, given an NECS $C = \set{ \res{a_i}{n_i} : i \in [k]}$, if we define $\lcm(C)$ to be the least common multiple of $n_1, \ldots, n_k$, then it is not hard to adapt Lemma~\ref{Lem:0501} to show that $\Phi$ preserves lcm as well. For example, the last column in Figure~\ref{Fig:0501} gives the lcm of the corresponding decomposition $S$, as well as that of the NECS $\Phi(S)$. Thus, we see that $g(n)$ gives the number of NECS whose lcm divides $n$, and $h(n)$ gives the number of NECS whose lcm is exactly $n$. (Goulden et al.~\cite[Proposition 6]{GouldenGRS18} also obtained a recursive formula for finding the number of NECS with a given lcm, gcd, and number of residual classes.)

Also, recall that $\mu(ab) = \mu(a)\mu(b)$ whenever $a,b$ are coprime. Thus, if $r_1, \ldots, r_d \in \mN$ are pairwise coprime, then it follows from Proposition~\ref{Prop:0501} that $g(r_1, \ldots, r_d) = g\left( \prod_{i=1}^d r_i \right)$ and $h(r_1, \ldots, r_d) = h\left( \prod_{i=1}^d r_i \right)$. Thus, given $n \in \mN$ with prime factorization $n = p_1^{m_1} \cdots p_d^{m_d}$, there is a one-to-one correspondence between NECS with lcm $n$ and hypercube decompositions in $\S_d$ with lcm $(p_1^{m_1}, \ldots, p_d^{m_d})$. This geometric interpretation of NECS is similar to the ``low-dimensional mapping'' used by Berger, Felzenbaum, and Fraenkel, who obtained a series of results on covering systems by mapping them to lattice parallelotopes and employing geometric and combinatorial arguments. The reader can refer to~\cite{BergerFF87} and the references therein for their results. It would be interesting to investigate further what more can we learn about covering systems by relating them to hypercube decompositions.

\end{document}